\pdfoutput=1 
\documentclass{amsart}
\usepackage{tikz-cd}
\usepackage{hyperref}
\usepackage{graphicx,color}
\usepackage{amssymb,amsmath, bm}
\usepackage{mathrsfs}
\usepackage{enumerate}
\usepackage{tkz-graph}
\usepackage{ragged2e}
\usepackage{enumitem}
\usepackage{adjustbox}
\usepackage{tikz}
\usepackage[all]{xy}
\usetikzlibrary{arrows,decorations.pathmorphing,automata,backgrounds}
\usetikzlibrary{backgrounds,positioning}
\usepackage{transparent}
\usepackage{graphicx,import}
\usepackage{subfigure}
\usepackage{enumitem}

\addtolength{\textwidth}{2.5cm}
\addtolength{\hoffset}{-1cm}
\addtolength{\textheight}{2cm}
\addtolength{\voffset}{-1cm}

\tikzset{plain/.style={circle,draw,very thick,
		inner sep=0pt,minimum size=6mm}}
\tikzset{empty/.style={circle,inner sep=0pt,minimum size=6mm}}
\tikzset{emptyvt/.style={circle,inner sep=0pt,minimum size=0mm}}

\newcommand{\pentatree}{\raisebox{-4pt}{\includegraphics[height=12pt]{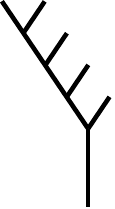}}}
\newcommand{\trivalenttree}{\raisebox{-4pt}{\includegraphics[height=12pt]{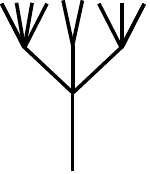}}}
\newcommand{\bluetree}{\raisebox{-4pt}{\includegraphics[height=14pt]{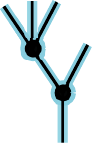}}}
\newcommand{\orangetree}{\raisebox{-4pt}{\includegraphics[height=14pt]{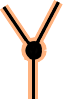}}}
\newcommand{\yellowtree}{\raisebox{-4pt}{\includegraphics[height=14pt]{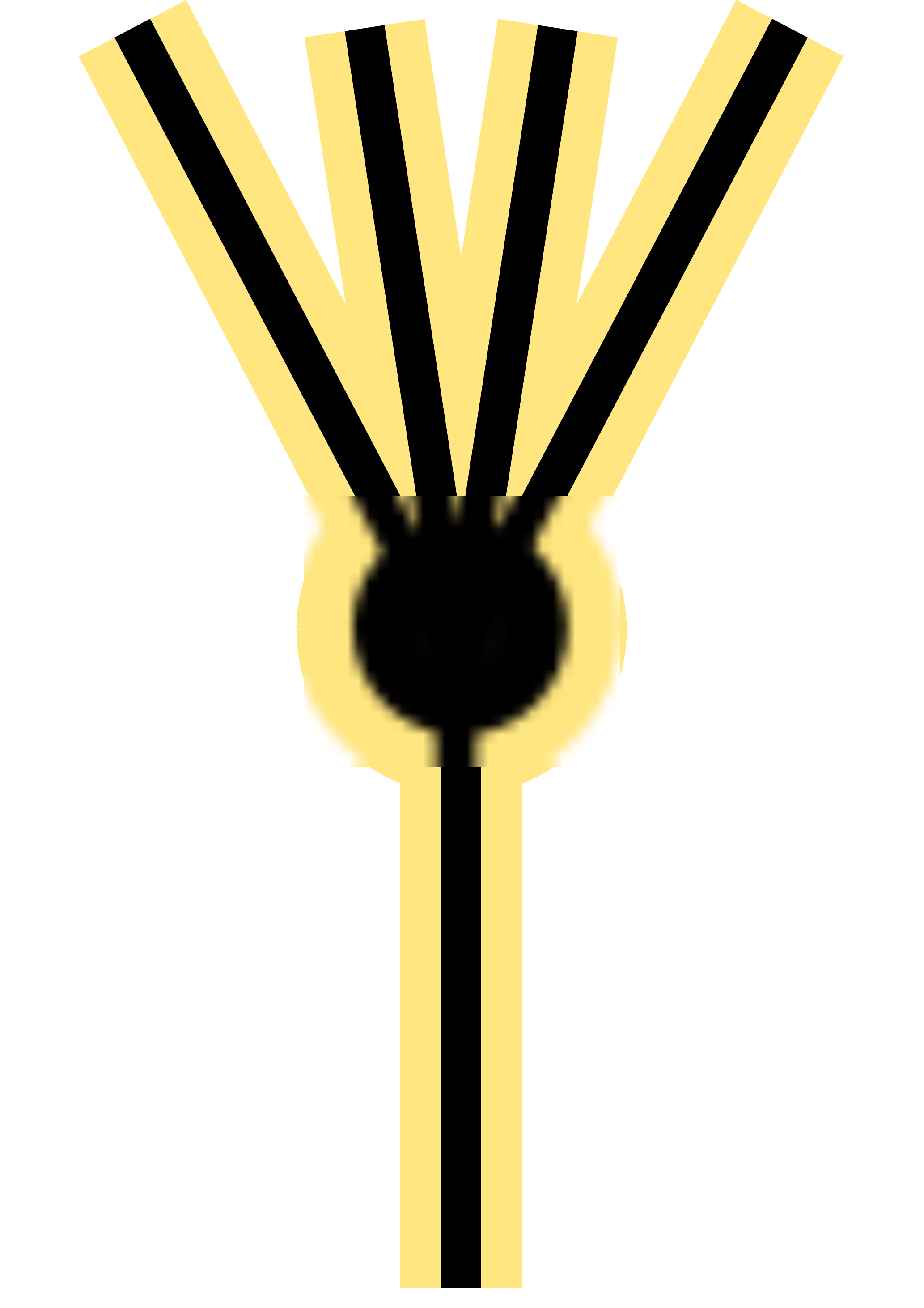}}}
\newcommand{\pinktree}{\raisebox{-4pt}{\includegraphics[height=14pt]{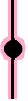}}}
\newcommand{\bluetreetwo}{\raisebox{-4pt}{\includegraphics[height=14pt]{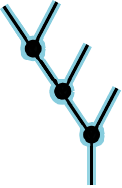}}}

\DeclareMathOperator{\cact}{\mathcal{C}act}



\newcommand{\set}{\mathsf{Set}}

\newcommand{\N}{\mathbb{N}}

\newcommand{\B}{\mathcal{B}}
\newcommand{\F}{\mathcal{F}}
\newcommand{\pp}{\mathcal{P}}
\newcommand{\OO}{\mathcal{O}}

\newcommand{\id}{\operatorname{id}}

\newcommand{\Hom}{\operatorname{Hom}}
\newcommand{\Map}{\operatorname{Map}}
\newcommand{\CoEnd}{\operatorname{CoEnd}}
\newcommand{\del}{\partial}

\newcommand{\ie}{iE}

\newcommand{\om}{\Omega}
\newcommand{\Path}{\text{Path}}

\newtheorem{theorem}{Theorem}[section]
\newtheorem{prop}[theorem]{Proposition}
\newtheorem{lemma}[theorem]{Lemma}
\newtheorem{cor}[theorem]{Corollary}
\newtheorem*{thm*}{Theorem}

\newtheorem{Th}{Theorem}

\theoremstyle{definition}
\newtheorem{definition}[theorem]{Definition}
\newtheorem{example}[theorem]{Example}
\newtheorem{remark}[theorem]{Remark}

\numberwithin{equation}{section}

\setcounter{tocdepth}{2}
\let\oldtocsection=\tocsection
\let\oldtocsubsection=\tocsubsection
\renewcommand{\tocsection}[2]{\hspace{0em}\oldtocsection{#1}{#2}}
\renewcommand{\tocsubsection}[2]{\hspace{1em}\oldtocsubsection{#1}{#2}}
\DeclareRobustCommand{\SkipTocEntry}[5]{}

\title{An infinity operad of normalized cacti}

\author[L. Basualdo Bonatto]{Luciana Basualdo Bonatto}
\address{Mathematical Institute \\ University of Oxford \\ Oxford, UK}
\email{luciana.bonatto@maths.ox.ac.uk}

\author[S. Chettih]{Safia Chettih}
\address{Department of Mathematics \\ Southwestern University \\ Georgetown, Texas, USA}
\email{chettihs@southwestern.edu}

\author[A. Linton]{Abigail Linton} \address{School of Mathematical Sciences \\ University of Southampton \\ UK} \email{a.linton@soton.ac.uk}

\author[S. Raynor]{Sophie Raynor}
\address{Department of Mathematics and Statistics \\ Macquarie University \\ NSW, Australia}
\email{sophie.raynor@mq.edu.au}

\author[M. Robertson]{Marcy Robertson}
\address{School of Mathematics and Statistics \\ The University of Melbourne \\ Melbourne, Victoria, Australia}
\email{marcy.robertson@unimelb.edu.au}

\author[N. Wahl]{Nathalie Wahl}
\address{Department of Mathematical Sciences \\ University of Copenhagen \\ Copenhagen, Denmark}
\email{wahl@math.ku.dk}

\date{\today}

\begin{document}

\begin{abstract}
We show that normalized cacti form an $\infty$-operad in the form of a dendroidal space satisfying a weak Segal condition. 
To do this, we introduce a new topological operad of bracketed trees and an enrichment of the dendroidal category $\om$.
\end{abstract}

\maketitle

\section{Introduction}
 Gluing surfaces along their boundaries allows to define composition laws
 that have been used to define cobordism categories, as well as operads and props associated to surfaces. These have played an important role in recent years, for example in  constructing topological field theory or computing the homology of the moduli space of Riemann surfaces. Of particular interest is the cobordism category whose morphism spaces are moduli spaces of Riemann surfaces. It has long been known that such moduli spaces admit a graph model: they have the homotopy type of spaces of metric fat graphs \cite{BowEps,Har84,Pen87}. The composition of moduli spaces induced by the gluing of surfaces was modeled using graphs in \cite[Construction 3.29]{egas_comparing}. Though the resulting composition is associative on the associated chain complex, it is not associative on the space level, and, at present, it is not known how to make it associative, or even coherently homotopy associative \cite[Remark 3.31]{egas_comparing}. In genus $0$, this graph model of the cobordism category includes normalized cacti (eg. \cite[Remark~ 2.8]{whal_westerland}), whose composition was also known not to be associative \cite[Remark~2.3.19]{K05}. The goal of our paper is to show that the composition of normalized cacti is associative up to all higher homotopies, in the precise sense that normalized cacti form an $\infty$-operad in the way detailed below. We expect that the technique presented here can be extended to likewise show that the composition in the graph model of the cobordism category is also associative up to all higher homotopies.

\medskip

\begin{figure}[ht]    \centering   
\includegraphics[width=0.3\textwidth]{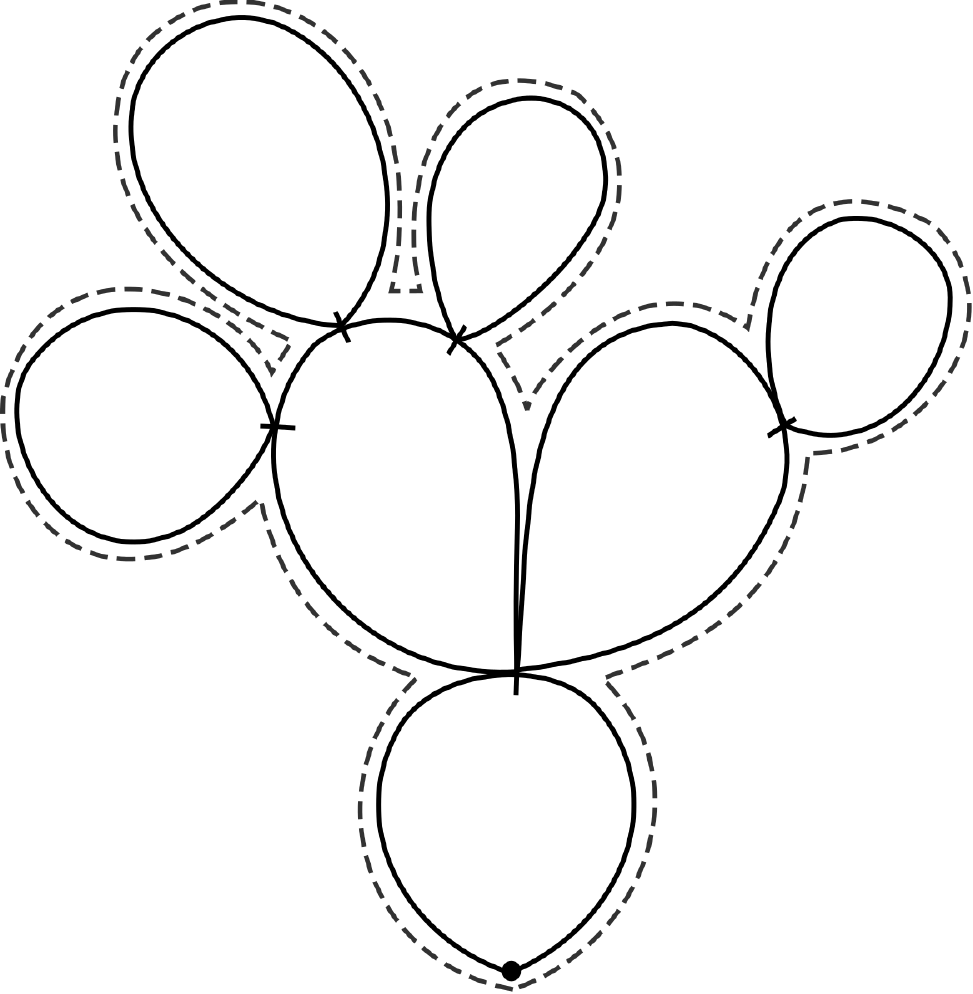}
\caption{Spineless cactus with 7 lobes, with its outside  the dotted line.}\label{fig:cactus_in_pot} \end{figure}
A cactus is a treelike configuration of circles (Figure~\ref{fig:cactus_in_pot}). The cactus operad, originally introduced by Voronov \cite[Section 2.7]{V05}, and its spineless version, introduced by Kaufmann \cite[Section 2.3]{K05}, are models for the framed and unframed little disc operads respectively \cite[Section 3.2.1]{K05}. Operadic composition is by insertion: identifying the outside contour of one cactus with the lobe of another cactus and scaling the inserted cactus appropriately. Here we  work with the spineless version for simplicity.

A cactus is \emph{normalized} if each circle in the cactus has circumference of length one. The space of all normalized cacti with $k$-lobes is denoted by $\cact^1(k)$ and these spaces assemble into the symmetric sequence $\cact^1=\{\cact^1(k)\}_{k\ge 0}$, with each $\cact^1(k)\subset \cact(k)$ a homotopy equivalent subspace, for $\cact(k)$ the space of all cacti with $k$ lobes.  (See \cite[Sec 2.3]{K05}.) Composition of normalized cacti is defined by insertion as for the cactus operad, but instead of scaling the 
inserted cactus to the size of the lobe it is inserted in, one scales the 
lobe to the size of the inserted cactus. Surprisingly, as illustrated in Figure~\ref{fig:CactCompUnassoc}, this new composition is not associative (\cite[Remark~2.3.19]{K05}). So, normalized cacti do \emph{not} form an operad. 
This non-associative composition is, however, the one relevant to the graph model of the cobordism category, as we explain in Remark~\ref{rem:cobcomposition}.

\medskip

Our main result is that this composition of normalized cacti is part of an $\infty$-operad structure. In this paper, an $\infty$-operad is a
dendroidal Segal space in the sense of \cite[Definition 8.1]{cm2}.\footnote{We use a slight variation of the original definition, for details see Definition~\ref{def: dendroidal space} Remark~\ref{remark: Segal map}}A dendroidal space is a space-valued $\om$-diagram, where the dendroidal category $\om$ is the full subcategory of colored 
operads freely generated by trees (Definition~\ref{def: segal condition}). Dendroidal spaces are closely related to operads
since there is an isomorphism of categories
between one-colored topological operads and reduced 
(i.e.~monochromatic) dendroidal spaces satisfying a strict Segal condition. 
Dendroidal spaces that satisfy a weak Segal 
condition are a model for $\infty$-operads, Quillen equivalent to all other known models for $\infty$-operads 
including: dendroidal sets satisfying an inner Kan condition \cite[Proposition 
6.3; Theorem 8.15]{cm2}, Lurie's $\infty$-operads \cite[Section 
2.5]{MR3545944}; \cite[Corollary 1.2]{Chu_Haugseng_Huets} and Barwick's 
complete Segal operads \cite[Theorem 1.1]{Chu_Haugseng_Huets}.

Operads can be described as algebras over the operad  of operads $\OO$, an operad whose elements can be represented by certain trees (Definition~\ref{def:operad of operads}). 
In Section~\ref{sec:BO}, we define a bracketing of a tree and use it to construct a new topological operad $B\OO$ (Definition~\ref{def: BO}) whose algebras are homotopy associative versions of operads: 
Any $B\OO$--algebra has an underlying symmetric sequence and a preferred composition, but the composition  is only associative up to coherent homotopy. 
The operad $B\OO$ is the realization of an operad whose operations lie in the poset of bracketings of the trees in $\OO$. 
Given a composition on a symmetric sequence, this operad gives a hands-on way to keep track of the homotopies required to show that it is coherently homotopy associative. We illustrate how to construct a $B\OO$--algebra in practice by showing:

\begin{Th}[Theorem~\ref{thm: cact is BO alg}]\label{ThCactisBO}
The symmetric sequence $\{\cact^1(k)\}_{k\ge 0}$ of normalized cacti, together with the $\cact^1$ composition described above, extends to a $B\OO$--algebra structure.  
\end{Th}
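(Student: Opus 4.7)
The plan is to unpack what a $B\OO$--algebra structure on the symmetric sequence $\cact^1$ amounts to in concrete terms and then to construct it by iterated normalized-cactus insertion organized by bracketings. Since $B\OO$ is the realization of an operad with operations in the poset of bracketings of trees in $\OO$, a $B\OO$--algebra structure on $\cact^1$ corresponds to giving, for each tree $T$ and each bracketing $b$ of $T$, a composition map $\mu_{T,b}\colon\prod_{v\in V(T)} \cact^1(|v|)\to\cact^1(|T|)$, such that these maps extend continuously over the realization of the bracketing poset, are equivariant for the symmetric group actions, and are compatible with morphisms of trees and with the operadic structure on $B\OO$.

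First I would define $\mu_{T,b}$ on each individual bracketing by iterated normalized-cactus insertion carried out in the order dictated by the brackets: the innermost brackets pick out subtrees whose vertex-labelled cacti are composed first, and the resulting cactus is treated as a single vertex when resolving the next-outer bracket. Performing this recursively produces a single normalized cactus with $|T|$ lobes, and each individual insertion depends continuously on its two inputs. Equivariance and compatibility with $\om$-morphisms (edge contraction corresponds to precomposition with the unit cactus, grafting corresponds to an outermost insertion) are then immediate from the insertion recipe, and isomorphisms of trees just permute the indexing of the factors.

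The heart of the argument, and the step I expect to be the main obstacle, is to verify that the family $\{\mu_{T,b}\}_b$ extends continuously over the full realization of the bracketing poset of $T$. The geometric input here is that each normalized insertion is controlled by a single scaling parameter, namely the outside length of the guest cactus which becomes the rescaling factor applied to the host lobe, and that refining a bracket corresponds to interpolating a product of such scalings along a continuous path. Concretely, I would track, along a chain of bracketings, the tuple of scaling factors attached to the edges of $T$ and show that each simplex of the bracketing realization maps continuously into $\cact^1(|T|)$ compatibly with the two associated iterated compositions at its endpoints; higher simplices carry the analogous higher coherences. Checking that faces match and that the resulting map is continuous is a direct, if somewhat involved, calculation in the combinatorics of insertions and the rescaling rules of $\cact^1$, and it is precisely the coherence data that $B\OO$ was designed to record.
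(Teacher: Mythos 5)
Your overall architecture matches the paper's: vertices of the bracketing poset go to iterated normalized insertions (innermost brackets first, simultaneous insertion within each bracket), the empty bracketing goes to the preferred simultaneous composition, and the higher cells are filled by interpolating scaling data. You have also correctly located the difficulty in the extension over the realization of $\mathcal{B}(T)$ and in the coherence checks.

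There is, however, a genuine gap at exactly that point. You assert that ``refining a bracket corresponds to interpolating a product of such scalings along a continuous path'' and that face-matching and the higher coherences are ``a direct, if somewhat involved, calculation,'' but you do not supply the structure that makes the interpolation well-defined in the first place. Two different bracketings of the same tree produce their cacti via recursions whose intermediate renormalization steps do not match up, so there is no canonical path between the two outputs unless all bracketed compositions are first expressed inside one ambient \emph{associative} structure. This is what the paper's operad $MS^+(k)=\mathcal{F}(k)\times Mon^+(I,\partial I)$ (normalized cacti together with a strictly monotone reparametrization of the source circle, embedded in $\CoEnd(S^1)$) provides: every bracketed composition becomes the single associative $MS^+$-composition precomposed with bracketing-dependent reparametrizations $g_i$ (plus auxiliary maps $h_j$, one per bracket, recording the renormalization of an already-composed bracket when it is inserted further out), followed by the projection $\mathcal{R}$ forgetting the reparametrization. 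The extension over $|\mathcal{B}(T)|$ is then \emph{linear} interpolation of the $g_i$ in the convex space $Mon^+(I,\partial I)$, and it is this affineness that makes continuity, the simplicial identities, and all higher coherences automatic rather than a case-by-case verification; compatibility with the operadic composition of $B\OO$ likewise reduces to associativity of $MS^+$ together with a bookkeeping identity for the extra bracket created by $\circ_i$. Relatedly, your claim that each insertion is ``controlled by a single scaling parameter'' is too coarse: the required reparametrizations are piecewise linear with a different slope over each lobe (the factor $\frac{km_j}{m_1+\cdots+m_k}$ on $I_j(x)$), and it is these piecewise data, not a single global scale, that must be tracked and interpolated. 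Without identifying a convex (or otherwise canonically contractible) parameter space through which all the compositions factor, the plan does not close.
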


In Section~\ref{sec:thickening-omega}, we show that this hands-on notion of an operad up to homotopy is related to more well-known notions of $\infty$-operads. We achieve this by showing that any $B\OO$--algebra defines a dendroidal Segal space.  
First we construct a topological enrichment $\widetilde\om_0$ of the dendroidal category $\om$ whose objects are trees, as for $\om$, but whose morphisms are  the realisation of certain posets of bracketings in trees, defined in a similar fashion to the operad $B\OO$. 
Diagrams over this thickened dendroidal category $\widetilde\om_0$ are types of homotopy coherent dendroidal spaces.
In Proposition~\ref{prop:homotopy Segal implies Segal}, we show that a homotopy coherent $\widetilde\om_0$--diagram can be rectified to a strict $\om$--diagram that satisfies the Segal condition if the original diagram did. 
By defining a nerve functor that takes a $B\OO$-algebra to the category of strictly reduced $\widetilde\om_0$-diagrams that satisfy a strict Segal condition, we prove the following: 

\begin{Th}[Theorem~\ref{thm:W_0O-Omega_0} and 
Proposition~\ref{prop:homotopy Segal implies Segal}
]\label{ThBO}
There is an isomorphism of categories between $B\OO$-algebras and the category of $\widetilde\om_0$-diagrams that satisfy a strict Segal condition. In particular, as each $\widetilde\om_0$-diagram can be rectified, every $B\OO$-algebra is an $\infty$-operad. 
\end{Th}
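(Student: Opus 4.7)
The plan is to prove the theorem in two stages, mirroring its two-part structure. For the first stage, the isomorphism of categories between $B\OO$-algebras and strictly Segal $\widetilde\om_0$-diagrams, I would construct an explicit nerve-like functor $N$ from $B\OO$-algebras to $\widetilde\om_0$-diagrams, together with an explicit inverse, exploiting the fact that $\widetilde\om_0$ was designed so that its morphism spaces encode bracketings of trees in precisely the way that the operad $B\OO$ from Definition~\ref{def: BO} encodes bracketed compositions. Given a $B\OO$-algebra $A$ with underlying symmetric sequence $\{A(k)\}$, I define $N(A)(T)$ for a tree $T \in \om$ as the space of $T$-shaped operations obtained by evaluating the $B\OO$-action on the bracketings of $T$ that appear in $\widetilde\om_0$. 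A morphism $T \to T'$ in $\widetilde\om_0$ is then sent to the induced map on operation spaces, and functoriality follows from the operad axioms for $B\OO$ combined with the compatibility of bracketings with tree morphisms.

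Conversely, given a strictly reduced, strictly Segal $\widetilde\om_0$-diagram $X$, I recover a symmetric sequence by setting $A(k) := X(C_k)$ on corollas $C_k$, and recover the $B\OO$-action from the structure maps of $X$ on the bracketing posets of $\widetilde\om_0$; the strict Segal condition guarantees that values on larger trees are determined by corolla values, so no structure is lost in the passage $X \mapsto A$. The main obstacle will be verifying that the two constructions are mutually inverse \emph{on the nose}, which amounts to matching the poset structure on bracketings appearing in $\widetilde\om_0$ with the compositional combinatorics of $B\OO$. I expect this to reduce to a careful tree-level comparison: since both $B\OO$ and the morphism spaces of $\widetilde\om_0$ were built from the same basic notion of bracketing, the correspondence should be essentially definitional once the bookkeeping of symmetric group actions, root-edge compositions, and reducedness is aligned.

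For the second statement, that every $B\OO$-algebra is an $\infty$-operad, I would apply the first statement to identify $A$ with a strictly Segal $\widetilde\om_0$-diagram $N(A)$, and then invoke Proposition~\ref{prop:homotopy Segal implies Segal} to rectify $N(A)$ to a genuine $\om$-diagram (that is, a dendroidal space) satisfying the Segal condition, which by Definition~\ref{def: dendroidal space} is precisely an $\infty$-operad in the sense used in this paper. The remaining subtlety is to confirm that rectification preserves the Segal condition and sends strictly reduced diagrams to reduced ones; both of these are exactly what that proposition provides, so the conclusion is immediate once the first stage is in hand.
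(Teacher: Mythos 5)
Your proposal is correct and follows essentially the same route as the paper: the paper's functor $\Phi$ is exactly your nerve $N$ (with $\Phi(\pp)(T)=\prod_{v\in V(T)}\pp(|v|)$, the bracketings entering only in the definition on morphisms), the inverse is recovered from corollas exactly as you describe, and the second half is the same appeal to Proposition~\ref{prop:homotopy Segal implies Segal} after a fibrant replacement. The only small imprecision is your phrase ``evaluating the $B\OO$-action on the bracketings of $T$'' for the value of $N(A)$ at $T$ --- the object-level value is forced by strict reducedness and the strict Segal condition to be the product of corolla values, and the $B\OO$-action is used only to define $N(A)$ on the morphism spaces $L_g$.
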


By combining Theorem~\ref{ThCactisBO} and Theorem~\ref{ThBO}, normalized cacti are a rare example of an $\infty$-operad that does not arise via the application of a nerve construction to a known (discrete or topological) operad (Corollary~\ref{cor: cact is dendroidal segal space}). Indeed, to our knowledge, the only such examples include the weak operad of configuration spaces \cite[Corollary 5]{Hackney_config} and examples that arise as a result of completion as in \cite[Proposition 5.1]{BHR}. 
\medskip

The idea of using a resolution of the operad  of operads $\OO$ to model $\infty$-operads is not a new one. 
A classical way to resolve an operad is to apply 
the Boardman-Vogt $W$-construction. Applied to the operad $\OO$, one gets an operad $W\OO$ whose algebras are also $\infty$-operads: there exists a zig-zag of Quillen equivalences between the category of $W\OO$-algebras and reduced dendroidal Segal spaces. (For example this can be seen by combining Theorem 4.1 of \cite{bm_resolution} with either Theorem 1.1 of \cite{Bergner_Hackney_14} or Theorem 8.15 of \cite{cm3}.) 
However, the operad $W\OO$ is not easy to work with directly. Indeed, its elements are trees (from the $W$--construction) whose vertices are themselves decorated by trees (from the operad $\OO$), where the first trees compose by grafting and the second trees compose by vertex substitution. In Appendix~\ref{sec: BO and WO}, we show that the operad $B\OO$ is actually isomorphic to a quotient $W_0\OO$ of $W\OO$: 

\begin{Th}[Theorem~\ref{thm:BOWO}]
There exists an isomorphism of topological operads $W_0\OO\cong B\OO$.
\end{Th}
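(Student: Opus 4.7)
The plan is to construct an explicit isomorphism $\Phi : W_0\OO \to B\OO$ by interpreting each tree-of-trees in $W_0\OO$ as a bracketed tree via vertex substitution, and then verifying that $\Phi$ is a bijection compatible with topology and operadic structure.

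First I would recall the shape of an element of $W\OO(n)$: an outer tree $S$ with $n$ leaves whose vertices $v$ are decorated by trees $T_v\in\OO$, together with a length $\ell_e \in [0,1]$ on each internal edge of $S$, subject to the Boardman--Vogt relations that collapse length-$0$ edges by vertex substitution of the adjacent decorations. The quotient $W_0\OO$ cuts down this cubical $[0,1]$-parameterization to the combinatorial information of how the coordinates are ordered, so that its cells are indexed by chains of contractions rather than by arbitrary points of $[0,1]^E$.

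Next, given such an element of $W_0\OO(n)$, vertex substitution produces a single tree $T\in\OO(n)$ carrying a canonical partition of its vertex set into the subtrees $T_v$. By the definition of bracketings in Section~\ref{sec:BO}, this partition is a bracketing of $T$, and the ordering of the edge lengths of $S$ refines this into a chain of bracketings of $T$, that is, a simplex in the realization of the bracketing poset of $T$, i.e., a point of $B\OO(n)$. I would take $\Phi$ to be the map assigning to each tree-of-trees the associated point in the realization. The inverse goes the other way: a chain of bracketings of a tree $T$ reconstructs an outer tree $S$ by contracting each block to a vertex, the decorations $T_v$ by the subtrees cut out by the outermost bracketing, and the ordering of edge lengths from the position of each edge in the chain.

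The remaining steps are to check that (i) $\Phi$ descends to $W_0\OO$ and is a continuous bijection on each arity, (ii) $\Phi$ is equivariant for the $\Sigma_n$-actions, and (iii) $\Phi$ intertwines the grafting composition in $W\OO$ with the operadic composition on $B\OO$ coming from the composition of bracketings induced from $\OO$. Points (ii) and (iii) are mostly unwinding the definitions once the bijection is in place. The main obstacle is (i): one must show that the identifications imposed in passing from $W\OO$ to $W_0\OO$ correspond \emph{exactly} to the simplicial cell structure of the realization of the bracketing poset, so that the cube-to-simplex collapse on the $W$-side matches the stratification of the order complex on the $B\OO$-side. Doing this cleanly likely requires indexing both sides by the same combinatorial data of chains of contractions, and verifying that on each such cell $\Phi$ restricts to the standard affine identification between the ordered simplex and the corresponding simplex in $[0,1]^E$ given by $0\le \ell_{e_1}\le \cdots \le \ell_{e_k}\le 1$.
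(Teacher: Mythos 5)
Your overall architecture is the same as the paper's: build an explicit map by composing the tree-of-trees via vertex substitution, match the cubical parameterization of $W_0\OO$ with the order complex of the bracketing poset (this is exactly Lemma~\ref{-cubical}, where $|\B(T)|$ is identified with a cubical space of weighted bracketings), check bijectivity arity by arity, and then upgrade the continuous bijection to a homeomorphism. However, the combinatorial dictionary you propose is not the right one, and the proof does not go through as written. You take the bracketing associated to an element $(\mathbb{T}, (T_v)_v, (\ell_e)_e)$ to be the \emph{partition} of $V(T)$ into the decorating pieces $T_v$. This fails on counting grounds alone: an outer tree $\mathbb{T}$ with $k$ vertices has $k-1$ internal edges, hence $k-1$ lengths, but your partition has $k$ blocks, so there is no consistent way to assign a weight to each bracket. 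Worse, the blocks $T_v$ are pairwise disjoint, so the resulting bracketing forgets the nesting information carried by the tree structure of $\mathbb{T}$ (which records the order in which partial compositions are performed) and cannot interact correctly with the poset structure on $\B(T)$, whose chains are chains of \emph{inclusions of sets of brackets}, not refinements of partitions. The correct dictionary, used in Lemma~\ref{lemma:Constructing-the-map-WO-BO}, assigns to each \emph{non-root} vertex $v$ of $\mathbb{T}$ the bracket $S_v\subset T$ given by the image under substitution of the entire subtree of $\mathbb{T}$ above and including $v$ (i.e.\ $\bigcup_{w\ge v}T_w$), weighted by the length of the outgoing edge of $v$. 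These brackets are genuinely nested, there are exactly $|\ie(\mathbb{T})|$ of them, and the nesting poset of the $S_v$ recovers $\mathbb{T}$, which is what makes the inverse construction work.

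Beyond this central issue, several points you defer to "unwinding definitions" actually require care and occupy most of the paper's argument: well-definedness of the map under the $W_0$-relations, in particular why sliding an arity-one vertex labelled by a permutation (relation ($2'$)) past a neighbour does not change the resulting bracketing; the bookkeeping for the leaf labelling $\tau$ (the paper inserts an auxiliary arity-one vertex at the root when constructing the preimage); the fact that a bracket must be discarded when composition with $\eta$ makes it fail to be large; and, for the final homeomorphism claim, the continuity of the inverse is not automatic from a continuous bijection --- the paper deduces it from compactness of each $W_0\OO(n;m_1,\dots,m_k)$ and Hausdorffness of $B\OO(n;m_1,\dots,m_k)$, a step you should make explicit.
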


A $W\OO$--algebra is an operad up to homotopy, where the symmetric group action, the unit and associativity relation are all assumed to hold only up to coherent homotopy. ($W\OO$-algebras are called a lax operads in the Ph.D. thesis \cite{BrinkmeierThesis}.) On the other hand, a $W_0\OO$--algebra (or equivalently $B\OO$--algebra), is a homotopy operad where the composition is still only homotopy associative, but where the symmetric group action and unit are strict.

Theorem~\ref{ThBO} gives the relationship between the operad $B\OO\cong W_0\OO$ and the dendroidal category $\om$, showing a ``bracketed  version'' of the equivalence between $O$--algebras and appropriate $\om$--diagrams, i.e.~replacing $\OO$ and $\om$ by bracketed resolutions $B\OO$ and $\widetilde\om_0$. The operad $W\OO$ is a more complete resolution of $\OO$. 
For a category $\mathcal{K}$, there exists a resolution similar to the $W$--construction, namely the ``explosion'' $\widetilde{\mathcal{K}}$ of the category, as studied by Segal~\cite[Appendix B]{Segal74} and Leitch~\cite{Leitch}.
This ``explosion" has the property that $\widetilde{\mathcal{K}}$--diagrams are coherently homotopy $\mathcal{K}$--diagrams. 
Applying this construction to the category $\om$, one could expect that $W\OO$--algebras are related to $\widetilde\om$--diagrams in the same way that $B\OO=W\OO_0$--algebras are related to $\widetilde\om_0$--diagrams. 
We show in Theorem~\ref{thm:WO-Omega} that this does not quite hold, proving instead 
that there is an embedding of the category of $W\OO$-algebras as a full subcategory of the category of 
$\widetilde\om$--diagrams  
satisfying a strict Segal condition. 

\medskip

The results presented in this paper give a detailed infinity operad structure on normalized cacti. The input of the construction is a pre-given composition that we show to be associative up to coherent homotopy by using the operad $B\OO=W\OO/\sim$. The homotopies are constructed using the contractible space of basepoint preserving monotone reparametrizations of the circle (see the proof of Theorem~\ref{thm: cact is BO alg}). To extend the results to the cobordism category of graphs described above, one would need to replace $\OO$ by the operad $P\OO$, whose algebras are all symmetric properads~\cite[Section 14.1.2]{yj15}, define a resolution ``$BP\OO$'', as the appropriate quotient of the $W$-construction applied to $P\OO$. Our expectation is that these same reparametrisations of the circle will likewise provide all the necessary homotopies to provide an infinity composition in the cobordism category.

\addtocontents{toc}{\SkipTocEntry}
\subsection*{Acknowledgements} 
This work was done as part of the Women in Topology Workshop in August 2019, supported by
the Hausdorff Research Institute for Mathematics, NSF grant DMS 1901795, the AWM ADVANCE
grant NSF-HRD-1500481, and Foundation Compositio Mathematica. Additional work by L.B.B. and M.R. was carried out while in residence at MSRI in 2020. L.B.B. was supported by CNPq (201780/2017-8).   S.R. acknowledges the support of the Centre of Australian Category Theory and Australian Research Council grants DP160101519 and FT160100393. N.W. was supported by the Danish National Research Foundation through the Copenhagen Centre for Geometry and Topology (DNRF151) and the European Research Council (ERC) under the European
Union's Horizon 2020 research and innovation programme (grant agreement No. 772960). 
	
In addition, we would like to thank Philip Hackney, Gijs Heuts, Muriel Livernet, Claudia Scheimbauer and Bruno Vallette for helpful conversations, suggested references and comments.

\tableofcontents
\section{Preliminaries on Operads}

A \emph{symmetric sequence} in a symmetric monoidal category $\mathcal{S}$ is a collection $\mathcal{P}=\{\mathcal{P}(k)\}_{k\geq 0}$ of objects in $\mathcal{S}$ in which each $\mathcal{P}(k)$ comes equipped with an action of the symmetric group $\Sigma_k$. In this paper, our symmetric monoidal category $\mathcal{S}$ will either be the discrete category of sets, the category of simplicial sets, or the category of topological spaces with their standard Cartesian products.  

An \emph{operad} in $\mathcal{S}$ is a symmetric sequence $\mathcal{P}=\{\mathcal{P}(k)\}_{k\geq 0}$
 together with a distinguished element $\iota \in \mathcal P(1)$, called the \emph{unit}, and a collection of composition maps 
 \[\begin{tikzcd} \circ_{i}: \mathcal{P}(k)\times \mathcal{P}(j)\arrow[r] & 
 \mathcal{P}(k+j-1),\end{tikzcd}\]  $1\leq i \leq k$, which are associative, unital, and equivariant. For more complete details see, for example, \cite[Definition 11]{markl_operads_and_props}.  Given an operad $\mathcal{P}$, a symmetric sequence $\mathcal{Q}=\{\mathcal{Q}(k)\subseteq \mathcal{P}(k)\}_{k\geq 0}$ is a \emph{suboperad} of $\mathcal{P}$ if the restriction of the composition maps in $\mathcal{P}$ induce an operad structure on $\mathcal{Q}$.  A \emph{morphism of operads}  $f:\mathcal{P}\rightarrow\mathcal{Q}$ is a family of equivariant maps $$\{f(k): 
 \mathcal{P}(k)\rightarrow \mathcal{Q}(k)\}_{k\geq 0}$$ that are compatible with 
 composition and units. 

\begin{remark}\label{rem: simultaneous composition} It is equivalent to work with individual compositions $$\circ_i\colon \pp(k) \times \pp(j_i)\to \pp(k+j_i-1)$$ or with all $\circ_i$-compositions simultaneously. In the latter case, the simultaneous compositions are denoted by a map
\[\gamma_{\pp}\colon \pp(k)\times\pp(j_1)\times\ldots\times\pp(j_k)\to  \pp(\Sigma_{i=1}^{k}j_i).\] (eg:\cite[Proposition 13]{markl_operads_and_props}).

\end{remark}

More generally, we will use \textit{colored} operads. For any non-empty set $\mathfrak{C}$, a $\mathfrak{C}$-\emph{colored symmetric sequence} is a family of objects  $\mathcal{P}:=\{\mathcal{P}(c;c_1,\ldots,c_k)\}_{k\geq 0}$ in $\mathcal{S}$, where $(c;c_1,\dots,c_k)$ ranges over every list of colors in $\mathfrak{C}$ together with a map $\sigma^\star:\mathcal{P}(c;c_1,\ldots,c_k)\to \mathcal{P}(c;c_{\sigma(1)},\ldots,c_{\sigma(k)})$ for each $\sigma\in \Sigma_k$. A $\mathfrak{C}$-\emph{colored operad} is a $\mathfrak{C}$-colored symmetric sequence $\mathcal{P}$
together with a family of partial composition maps \[ \circ_i \colon \mathcal{P}(c;c_1,\dots,c_k)\times \mathcal{P}(d;d_1,\dots,d_j) \to \mathcal{P}(c;c_1,\dots, c_{i-1},d_1,\dots,d_j, c_{i+1}\dots, c_k)\]
 defined only when $c_i=d$, together with an element $\iota_c \in \mathcal P(c;c)$ for each $c \in \mathfrak{C}$, which satisfies unit, equivariance and associativity conditions.  For more details see, for example, \cite[Definition 1.1]{bm_resolution}.  When the color set is $\mathfrak{C}=\{*\}$, a $\mathfrak{C}$--colored operad is a one-colored operad. In this paper  we will refer to both operads and colored operads as ``operads'', only mentioning the color set when necessary.

An \emph{algebra} over a ($\mathfrak{C}$--colored) operad $\mathcal{P}$ is a collection of objects $\{X(c)\}_{c\in \mathfrak{C}}$ in $\mathcal{S}$ together with evaluation maps 
$$\alpha\colon \mathcal{P}(c;c_1,\dots,c_k)\times X(c_1)\times\dots\times X(c_k) \longrightarrow X(c)$$
satisfying appropriate associativity, unit and equivariance conditions, see e.g.~\cite[Definition 1.2]{bm_resolution}. The category of $\pp$-algebras in $\mathcal{S}$ is denoted $\pp\mathrm{-Alg}_{\mathcal{S}}$. 

Our main example of a colored operad will be the $\mathbb{N}$-colored operad $\OO$, whose algebras are the (non-colored) operads, see Definition~\ref{def:operad of operads}. In Section~\ref{sec:cacti}, we will also make use of the following operad: 

\begin{example}\label{Example: CoEnd}
Let $X$ be a fixed space in $\mathcal{S}$. The \emph{coendomorphism operad} of $X$, $\CoEnd(X)$, has an underlying symmetric sequence with arity $k$ spaces \[\CoEnd(n)(X):=\Map(X,X^{\times k}).\] The symmetric groups act by permuting the factors of $f=(f_1,\ldots,f_k)\in\CoEnd(k)$. If $f=(f_1,\ldots,f_k)\in\CoEnd(k)(X)$ and $g=(g_1,\ldots, g_j)\in\CoEnd(j)(X)$ the partial compositions 
\[\begin{tikzcd}\circ_i:\CoEnd(k)(X)\times \CoEnd(j)(X)\arrow[r] & \CoEnd(k+j-1)(X) \end{tikzcd}\]
are given by 
\[f\circ_ig = (f_1\,,\,\ldots\,,\, f_{i-1}\,,\, g_1\circ f_i\,,\, \ldots\,,\, g_j\circ f_{i}\,,\, f_{i+1}\,,\, \ldots\,,\, f_k).\]

\end{example}

\medskip

\subsection{Trees}\label{sec:trees}
Throughout this paper, we use trees to model operad compositions and as the basis of our main constructions.
A graph $G$ is a tuple $(V(G), H(G), s, i)$ where $V(G)$ is a set of vertices,
$H(G)$ a set of half-edges, $s:H(G)\rightarrow V(G)$ is the source map and $i:H(G)\rightarrow H(G)$ is an involution. 
Orbits of the involution $i$ are called \emph{edges} of $G$ and the set of edges is denoted by $E(G)$. An edge represented by a pair $\{h, i(h)\}$ with $i(h)\not=  h$ is called an \emph{internal edge}, and the set of internal edges is denoted $\ie(G)$. Edges corresponding to orbits of fixed points of the involution are \emph{external}.

A {\em tree} is a simply connected graph. 
All our trees will be \emph{rooted}, i.e. they come with a distinguished ``outgoing'' external edge called the \emph{root}. All other external edges are ``incoming'' and called \emph{leaves}. The set of leaves is denoted $L(T)$. The \emph{arity} of $T$ is the number of leaves $|L(T)|$. The root of the tree is denoted $R(T)$.

Note that a rooted tree can be canonically made into a directed graph by setting all the edges to point towards the root. Then note that the set of edges incident to a vertex always has a unique {\em outgoing edge}, the one closest to the root, and all other edges are \emph{incoming edges}. The number of incoming edges of a vertex $v$ is called the \emph{arity} of the vertex and denoted by $|v|$, with $|v|\ge 0$ any natural number. 

We allow the special tree $\eta=|$, with no vertices and a single edge. The trees with a single vertex and $n$ leaves are called \emph{$n$-corollas} and denoted $C_n$. A rooted tree $S$ is a \emph{subtree} of $T$ if $V(S)\subseteq V(T)$, $H(S)\subseteq H(T)$, and the structure maps for $S$ are restrictions of the structure maps for $T$, defining $i(h)=h$ in $S$ if $i(h)=h'$ in $T$ with $h'\notin H(S)$, so that for every  $v\in V(S)$, the arity of $v$ in $S$ and $T$ is the same.
A \emph{planar tree} is a rooted tree together with a preferred embedding into the plane. Note that for a planar tree, we get an induced canonical ordering of the incoming edges at each vertex.

\medskip

We use planar trees to model operad compositions via an operation called grafting.  Given trees $T$ and $T'$, of arity $n$ and $m$ respectively, and a leaf $i\in L(T)$, the {\em grafting} of $T'$ onto $T$ along the leaf $i$ is defined to be the tree $T \circ_i T'$ obtained by attaching the root of $T'$ to the leaf $i$ of $T$ so that they form a new internal edge in the grafted tree (Figure \ref{fig: grafting example}). Grafting of trees is also used to model the free operad generated by a symmetric sequence, as we will explain now. To avoid confusion later, when we will have to decorate vertices of trees by other trees, we will use blackboard fonts for the trees in the free operad construction (and later the associated $W$-construction in Section~\ref{sec: W-construction}), as we will soon apply this construction to a symmetric sequence of trees, which will give (blackboard) trees of (plain) trees. 

\begin{figure}[h]
	\centering
\subfigure[Example trees $T$ and $T'$]{
			\centering
			\begin{tikzpicture} [scale=1, inner sep=2mm]
			\begin{scope}[scale=0.5]
			    \coordinate (0) at (0,0);
			    \coordinate (r) at (0,-1);
			    \coordinate (1) at (-1, 1);
			    \coordinate (v) at (1, 1);
			    \coordinate (2) at (0, 2);
			    \coordinate (3) at (1, 2.4);
			    \coordinate (4) at (2, 2);
			    
			    \draw[line width=1pt] (r)--(0)--(1); 
			    \draw[line width=1pt ] (0)--(v)--(2);
			    \draw[line width=1pt ] (v)--(3);
			    \draw[line width=1pt ] (v)--(4);
			    
			    \foreach \i in {0, v} {\fill (\i) circle (4.5pt);}; 
			    
	            \node[right] at (v){$v$};
	            \node[below] at (1){$i$};
	            \node[left] at (-1, -1) {$T$};
			\end{scope}
			\begin{scope}[scale=0.5, xshift=5cm, yshift=-2cm]
			    \coordinate (0) at (0,0);
			    \coordinate (r) at (0,-1);
			    \coordinate (u) at (-1, 1);
			    \coordinate (7) at (1, 1);
			    \coordinate (5) at (-2, 2);
			    \coordinate (6) at (0, 2);
			    \coordinate (8) at (0, 1.2);
			    
			    \draw[line width=1pt ] (r)--(0)--(u)--(5); 
			    \draw[line width=1pt ] (0)--(7);
			    \draw[line width=1pt ] (u)--(6);
			    \draw[line width=1pt ] (0)--(8);

			    \foreach \i in {0, u, 6} {\fill (\i) circle (4.5pt);}; 
	            \node[left] at (-1, 0) {$T'$};
			\end{scope}
			\end{tikzpicture} 
		} \qquad \qquad \qquad
		\subfigure[The grafting $T \circ_i T'$]{
			\centering
			\begin{tikzpicture} [scale=0.6, inner sep=2mm]
			    \coordinate (0) at (0,0);
			    \coordinate (r) at (0,-1);
			    \coordinate (w) at (-1, 1);
			    \coordinate (v) at (1.2, 1);
			    \coordinate (2) at (0.2, 2);
			    \coordinate (3) at (1.2, 2.4);
			    \coordinate (4) at (2.2, 2);
			    \coordinate (u) at (-2, 2);
			    \coordinate (7) at (-0.1, 1.9);
			    \coordinate (5) at (-3, 3);
			    \coordinate (6) at (-1, 3);
			    \coordinate (8) at (-1, 2.2); 
			    
			    \draw[line width=1pt] (r)--(0)--(w); 
			    \draw[line width=1pt] (0)--(v)--(2);
			    \draw[line width=1pt] (v)--(3);
			    \draw[line width=1pt] (v)--(4);
			    \draw[line width=1pt] (w)--(u)--(5); 
			    \draw[line width=1pt] (w)--(7);
			    \draw[line width=1pt] (u)--(6);
			    \draw[line width=1pt] (w)--(8);
			
			    \foreach \i in {0, v, w, u, 6} {\fill (\i) circle (4pt);}; 

	            \node[right] at (v){$v$};

			\end{tikzpicture} 
		}
		\vspace{-1em}\caption{Grafting of trees.}
		\label{fig: grafting example}
\end{figure}
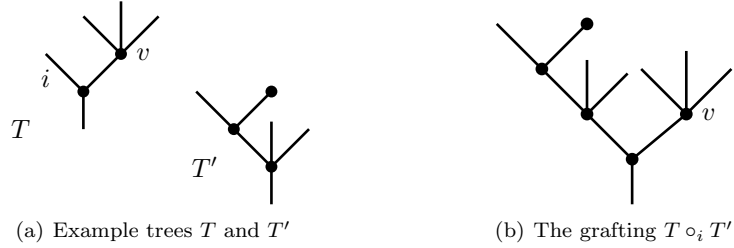

\begin{definition}\label{def: free operad} 
Let $\pp=\{\pp(c;c_1,\dots,c_k)\}_{c_i,c\in \mathfrak{C}}$ be a $\mathfrak{C}$-colored symmetric sequence in $\mathcal{S}$. 
A planar tree $\mathbb{T}$ is $\mathfrak{C}$-colored if it is equipped with a map $f:E(\mathbb{T})\rightarrow \mathfrak{C}$, we refer to $f(e)$ as the color of the edge $e$. A $\mathfrak{C}$-colored planar tree $\mathbb{T}$ is \emph{decorated} by $\pp$ if each vertex $v\in V(\mathbb{T})$ is labeled by an operation in $p_v\in \pp(out(v);in(v))$, where $out(v)$ is the color of the outgoing edge of $v$, and $in(v)$ is the list of colors of the incoming edges of $v$, ordered by the planar structure.
The \emph{free operad} $F(\pp)$ on $\pp$ is the $\mathfrak{C}$-colored operad whose $k$-ary operations are the $\mathfrak{C}$-colored, $\pp$ decorated, planar trees $\mathbb{T}$ of arity $k$ with leaves labeled by a bijection $\lambda:\{1,\dots,k\}\rightarrow L(\mathbb{T})$.

Explicitly, for each $c,c_1,\ldots,c_k\in\mathfrak{C}$,  
$$F(\pp)(c;c_1,\ldots,c_k):=\Big(\coprod\limits_{(\mathbb{T},f,\lambda)}\prod_{v\in V(\mathbb{T})}\pp(out(v);in(v))\Big)/\sim,$$ where $(\mathbb{T},f,\lambda)$ 
runs over all isomorphism classes of leaf-labeled $\mathfrak{C}$-colored planar trees with $k$ leaves such that $f(\lambda(i))=c_i$, $f(R(\mathbb{T}))=c$, and where the equivalence relation is generated by the following:

\begin{itemize}
    \item[($\ast$)]\label{free_operad_relation} two labeled trees $(\mathbb{T},f,\lambda,(p_v)_{v\in V(\mathbb{T})})$ and $(\mathbb{T}',f',\lambda',(p'_w)_{w\in V(\mathbb{T}')})$ are equivalent if there exists a non-planar isomorphism $\alpha:\mathbb{T}\to \mathbb{T}'$ such that 
    $f\circ \alpha=f'$, $\alpha\circ \lambda=\lambda'$, and $\sigma_v(\alpha)p_v=p_{\alpha(v)}$, for $\sigma_v(\alpha)$ the permutation on $in(v)$ induced by $\alpha$. 
\end{itemize}

The symmetric group acts on $F(\pp)$ by permuting the labels of the leaves, acting on $\lambda$, and composition in $F(\pp)$ is given by grafting of trees, with $\circ_i$ grafting at the leaf $\lambda(i)$. 
For full details see, for example, the construction under Corollary 3.3 \cite{bm_resolution}.
 
\end{definition}

We now employ the free operad construction to define a class of free operads $\om(T)$ generated by a planar tree $T$. 
This will play a fundamental role in the definition of the dendroidal category (Section~\ref{sec:omega}), which describes a model for $\infty$-operads.

\begin{example}\label{example: Omega(T)}
A planar tree $T$ generates a free colored operad $\Omega(T)$ as follows. 
The set of colors of $\Omega(T)$ is the set of edges $\mathfrak{C}=E(T)$. 
We define a discrete $E(T)$-coloured symmetric sequence $X(T)$ by
\[ X(T)(e;e_{\sigma(1)}, \dots, e_{\sigma(n)}) = \begin{cases}
     \{\sigma v\}& \text{if $(e;e_1, \dots, e_n) =(\text{out}(v); \text{in}(v))$, \ \text{for}\ $v \in V(T)$}, \\
    \emptyset & \text{otherwise,} 
\end{cases} \]with its built in free symmetric group action.  
Then $\Omega(T):=FX(T)$ is the free operad on the collection $X(T)$. 
Explicitly, $\Omega(T)$ is an $E(T)$-coloured operad with 
\[ \Omega(T)(e;e_{\sigma(1)}, \dots, e_{\sigma(n)}) =\begin{cases}
     \sigma S & \text{ if $(e;e_1, \dots, e_n)=(R(S);L(S))$, $S \subset T$}, \\
    \emptyset & \text{ otherwise.} 
\end{cases} \] for $S\subset T$  a subtree of $T$. Composition, as in the free operad, is given by grafting of subtrees. 
For further details, see Section 2.2 and just above Definition 2.3.1 in \cite{Moerdijk_Lecture_Notes}.
\end{example}

\subsection{The dendroidal category $\Omega$}\label{sec:omega}

The model we use for $\infty$-operads is that of dendroidal Segal spaces that satisfy the \textit{weak Segal condition}. Dendroidal spaces are diagrams of the dendroidal category. 

The dendroidal category $\Omega$ is the full subcategory of colored operads whose objects are the free operads $\Omega(T)$ generated by trees (as in Example~\ref{example: Omega(T)}). 
In other words, objects of $\om$ are planar isomorphism classes of planar rooted trees and morphisms in $\om$ are defined to be operad maps \[ \Hom_{\om}(S,T)=\Hom_{\mathsf{Op}}(\om(S),\om(T)).\] 
Morphisms in $\Omega$ can be described as a composition of four types of elementary morphisms: isomorphisms, degeneracies, inner and outer face maps. 
In terms of trees, {\em isomorphisms} are non-planar tree isomorphisms,  \emph{inner face maps} are of the form $\partial_{e}:T/e\rightarrow T$, where $T/e$ is the tree obtained from $T$ by contracting an inner edge $e\in \ie(T)$.  If $v$ is a vertex of $T$ with only one inner edge attached to it then $T/v$ is the tree obtained from $T$ by chopping off the vertex $v$ and the inclusion $\partial_{v}:T/v\rightarrow T$ is an \emph{outer face map}. A \emph{degeneracy} is a map $s_v:T/v\rightarrow T$ where $T/v$ is obtained from $T$ by deleting a vertex $v$, with $|v|=1$, in $T$.

In the opposite category $\Omega^{op}$, outer face maps correspond to restriction to certain allowed subtrees, while inner face maps correspond to edge collapses. 
For more details and plenty of examples see \cite{mw07,Moerdijk_Lecture_Notes}.

\begin{remark}
Our definition of $\om$ differs slightly from the usual definition in that we have chosen our objects to be planar trees. Technically, what we have described here is the equivalent category $\om'$ from \cite[2.3.2]{Moerdijk_Lecture_Notes}. 
\end{remark}

\begin{definition}\label{def: dendroidal space}
A \emph{dendroidal space} $X$ is an $\om$-diagram $X:\Omega^{op}\rightarrow\mathcal{S}$, where $\mathcal{S}$ is either the category of simplicial sets or topological spaces. 
\end{definition} 
The evaluation of $X$ at a tree $T$ is denoted $X(T)$. A dendroidal space is called \emph{reduced} if $X(\eta)\simeq*$, where $\eta=|$. We will write $\mathcal{S}^{\om^{op}}$ for the category of dendroidal spaces. \footnote{In the literature a dendroidal space is usually called reduced if $X(\eta)=*$ but we vary this slightly and say that a dendroidal space is reduced if $X(\eta)$ is contractible as in \cite[Definition 4.1]{BHR}.}

\medskip

For any vertex $v$ in a tree $T\in\om$, we have an associated outer face map in $\om$
$$C_v \longrightarrow T$$
taking the unique vertex of the corolla to $v\in V(T)$, where $C_v$ is the corolla with $|v|$ leaves. Likewise, for any internal edge between vertices $u$ and $v$ in $T$, there is a commuting diagram in $\om$
$$\xymatrix{\eta \ar[r] \ar[d] & C_u \ar[d] \\
C_v \ar[r] & T.}$$
Let ${\rm Sk}_1(T)$ be the category whose objects are the edges and vertices of $T$, thought of copies of $\eta$ and corollas $C_v$, and whose morphisms are associated to edge inclusions in $T$, as in the top left corner of the above diagram. 

For a dendroidal space $X$, the \emph{Segal map} 
is the unique map from $X(T)$ to the limit $\lim_{{\rm Sk}_1(T)^{op}}X$ induced by the corolla inclusions.  When $X(\eta)=*$, this limit becomes a product over the value of $X$ at the corollas, and the Segal map becomes the map 
\begin{equation*}
\begin{tikzcd} \chi\colon X(T)\arrow[r] & \prod_{v\in V(T)}X(C_v)\end{tikzcd}
\end{equation*}
with components the restriction to the value of $X$ at each corolla. 

The category of $\om$-diagrams admits two Quillen model category structures: the Reedy model structure and the projective model structure which are Quillen equivalent (eg:\cite[Remark 2.5]{BdBM}). Throughout, we take the \emph{projective model structure} in which a morphism of $\om$-diagrams is a weak equivalence or fibration if it is entrywise a weak equivalence or fibration. 

\begin{definition}\label{def: segal condition}
A dendroidal space $X\in\mathcal{S}^{\om^{op}}$ 
satisfies a \emph{strict Segal condition} if the Segal map is an isomorphism for each $\eta\not=T\in\Omega$. If $X$ is fibrant and the map $\chi$ is only a homotopy equivalence for each $\eta\not=T\in\om$ then we say that $X$ satisfies a \emph{weak} Segal condition. 
\end{definition}

\begin{remark}\label{remark: Segal map}
We briefly comment that in the original definition in \cite[Definition 8.1]{cm2} a dendroidal spaces satisfies the weak Segal condition if the Segal map is a trivial fibration. Our assumption that $X$ is fibrant allows us to only require that the Segal map is a weak equivalence as in \cite[Definition 3.1]{BdBM} or \cite[Definition 4.1]{BHR}.
\end{remark}

\subsection{The operad of operads}
One of the main constructions in this paper is the operad $B\OO$. This operad builds on an $\mathbb{N}$-colored operad $\OO$ called the \textit{operad of operads}, whose algebras are one-colored operads.

Let $T$ be a planar tree. For a vertex $v\in V(T)$ with arity $|v|=m$ and a planar tree $T'$ with $m$ leaves, the \emph{substitution} $T\bullet_{v} T'$ is obtained by removing the vertex $v$ from $T$ and identifying the incoming and outgoing edges of $v$ with the leaves and root of $T'$, respectively. 
An example is shown in Figure~\ref{fig: substitution example}.

\begin{figure}[h]
	\centering 
\subfigure[Example trees $T$ and $T'$]{
			\centering
			\begin{tikzpicture} [scale=1, inner sep=2mm]
			\begin{scope}[scale=0.5]
			    \coordinate (0) at (0,0);
			    \coordinate (r) at (0,-1);
			    \coordinate (1) at (-1, 1);
			    \coordinate (v) at (1, 1);
			    \coordinate (2) at (0, 2);
			    \coordinate (3) at (1, 2.4);
			    \coordinate (4) at (2, 2);
			    
			    \draw[line width=1pt ] (r)--(0)--(1); 
			    \draw[line width=1pt ] (0)--(v)--(2);
			    \draw[line width=1pt ] (v)--(3);
			    \draw[line width=1pt ] (v)--(4);
			    
			    \foreach \i in {0, v} {\fill (\i) circle (4.5pt);}; 
			    
	            \node[right] at (v){$v$};
	            \node[below] at (1){$i$};
	            \node[left] at (-1, -1) {$T$};
			\end{scope}
			\begin{scope}[scale=0.5, xshift=5cm, yshift=-2cm]
			    \coordinate (0) at (0,0);
			    \coordinate (r) at (0,-1);
			    \coordinate (u) at (-1, 1);
			    \coordinate (7) at (1, 1);
			    \coordinate (5) at (-2, 2);
			    \coordinate (6) at (0, 2);
			    \coordinate (8) at (0, 1.2);
			    
			    \draw[line width=1pt ] (r)--(0)--(u)--(5); 
			    \draw[line width=1pt ] (0)--(7);
			    \draw[line width=1pt ] (u)--(6);
			    \draw[line width=1pt ] (0)--(8);

			    \foreach \i in {0, u, 6} {\fill (\i) circle (4.5pt);}; 
	            \node[left] at (-1, 0) {$T'$};
			\end{scope}
			\end{tikzpicture} 
		} \qquad \qquad \qquad
		\subfigure[The substitution $T \bullet_v T'$]{
			\centering
			\begin{tikzpicture} [scale=0.6, inner sep=2mm]
			    \coordinate (0) at (0,0);
			    \coordinate (r) at (0,-1);
			    \coordinate (1) at (-1, 1);
			    \coordinate (v) at (1, 1);
			    \coordinate (2) at (0, 2);
			    \coordinate (3) at (1, 2.4);
			    \coordinate (4) at (2, 2);
			    \coordinate (u) at (0, 2);
			    \coordinate (7) at (2, 2);
			    \coordinate (5) at (-1, 3);
			    \coordinate (6) at (1, 3);
			    \coordinate (8) at (1, 2.2);
			    
			    \draw[white] (-3,0)--(3,0);
			    
			    \draw[line width=1pt] (r)--(0)--(1); 
			    \draw[ line width=1pt] (0)--(v)--(u)--(5);
			    \draw[ line width=1pt] (v)--(7);
			    \draw[ line width=1pt] (u)--(6);
			    \draw[ line width=1pt] (v)--(8);

			    \foreach \i in {0, v, u, 6} {\fill (\i) circle (4pt);}; 

	            \node[below] at (1){$i$};
			\end{tikzpicture} 
			} 
			    
			


		\vspace{-1em}\caption{Tree substitution (Compare with grafting in Figure \ref{fig: grafting example}).}
		\label{fig: substitution example}
\end{figure}
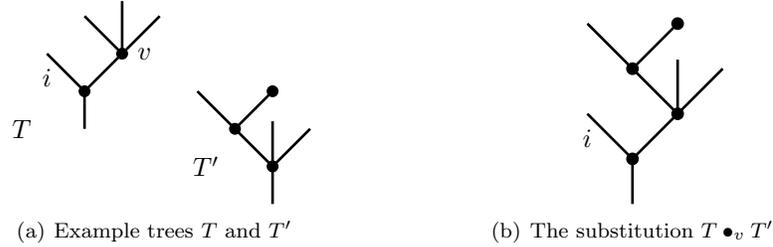

A \emph{labelled} planar tree is a triple $(T,\sigma,\tau)$, consisting of a planar tree $T$ equipped with bijections $\sigma:|V(T)|\rightarrow V(T)$ and $\tau:|L(T)|\rightarrow L(T)$. Two such triples $(T,\sigma,\tau)$ and $(T',\sigma',\tau')$ are isomorphic if there is a planar tree isomorphism $T\to T'$ that respects the labelling $\sigma, \tau$. We represent a labelled planar tree $(T,\sigma,\tau)$ by writing above each leaf $\ell\in L(T)$ the number $\tau^{-1}(\ell)$, and writing by each vertex $v\in V(T)$ the number $\sigma^{-1}(v)$, as depicted in Figure~\ref{fig:element of O}.
\begin{figure}[ht]
    \centering\def\svgwidth{0.2\columnwidth}
    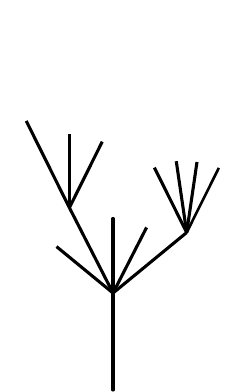
    \caption{Example of a labelled planar tree in $\OO(11;5,3,4,2)$.}\label{fig:element of O}
\end{figure}

We also define a tree substitution that is \emph{compatible with the labellings of the leaves}. Let $(T,\sigma,\tau)$ and $(T',\sigma',\tau')$ be two planar labelled trees with $|V(T)|=k$, $|V(T')|=l$ and $|L(T')|=|\sigma(i)|=m_i$.  
The map $\tau'$ encodes a permutation in the symmetric group with $m_i$ elements. We obtain a new planar tree $(\tau'_{\sigma (i)}) T$ by applying the permutation $\tau'$ on the $m_i$ incoming edges of the vertex $\sigma(i)\in V(T)$. 
We then define 
\begin{equation}\label{eq: substitution compatible with labelling}
    T \bullet_{\sigma(i),\tau'}T' = (\tau'_{\sigma i}) T \bullet_{\sigma (i)} T'.
\end{equation}

In particular, $V(T \bullet_{\sigma(i),\tau'}T') = \{V(T) - \sigma(i)\} \amalg V(T')$.
The labelling on the vertices of $T \bullet_{\sigma(i),\tau'}T'$ is given by the map $\sigma\circ_i\sigma'$, which is the induced bijection $\{1, \dots, k+l - 1\} \to V(T \bullet_{\sigma(i),\tau'}T')$
\[ j \mapsto \left \{ \begin{array}{ll}
  \sigma(j)  & 1 \leq j <i \\
    \sigma' (j- i +1) & i \leq j \leq i+ l\\
    \sigma(j -l + 1) & i+ l < j \leq k+l - 1.
\end{array}\right. \]
An example is shown in Figure~\ref{fig:Ocomposition}. 
\begin{figure}[ht]
    \centering\def\svgwidth{0.7\columnwidth}
    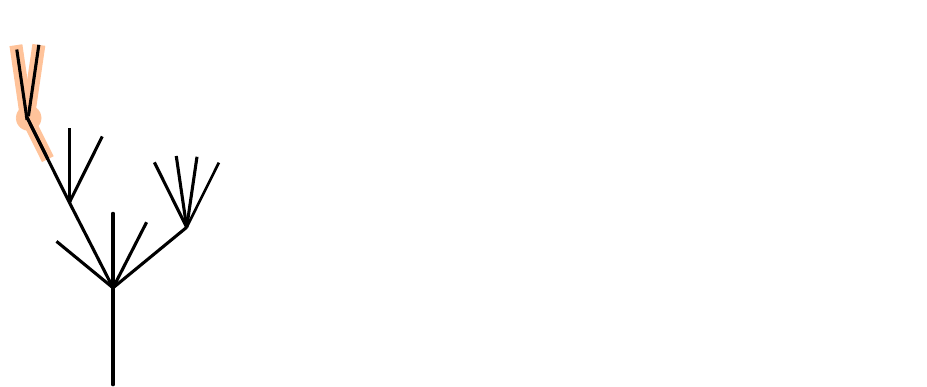
    \caption{Example of composition in $\OO$ where $\tau'$ is different to the planar order of $in(\sigma(2))$.}\label{fig:Ocomposition}
\end{figure}
In the case where the order induced by $\tau'$  on the $m_i$ incoming edges of $\sigma(i)$ is the same as the order induced by the planar structure, then $T \bullet_{\sigma(i),\tau'}T'=T \bullet_{\sigma(i)}T'$.

\begin{definition}\label{def:operad of operads} The \emph{operad of operads $\mathcal{O}$} is the $\mathbb{N}$--colored operad, for which \[\mathcal{O}(n;m_1,\dots,m_k)\] is the discrete space whose elements are isomorphism classes of \emph{labelled} planar rooted trees $(T,\sigma,\tau)$ where $T$ is a planar tree with $k$ vertices and $n$ leaves, with bijections $\sigma\colon |V(T)|\to V(T)$, $\tau\colon |L(T)|\to L(T)$, such that the vertex $\sigma(i)$ has arity $m_i$ for each $1 \leq i \leq k$. The composition operation  
    \[\begin{tikzcd}[row sep=tiny]
        \mathcal{O}(n;m_1,\dots,m_k)\times \mathcal{O}(m_i;b_1,\dots,b_l) \arrow[r, "\circ_i"] & \mathcal{O}(n;m_1,\dots,b_1,\dots,b_l,\dots,m_k) \\
        ((T,\sigma,\tau),(T',\sigma',\tau')) \arrow[r, mapsto] & (T,\sigma,\tau)\circ_i (T',\sigma',\tau')
    \end{tikzcd}\]
is induced by tree substitution that is compatible with the labelling as in \eqref{eq: substitution compatible with labelling}, where 
$$(T,\sigma,\tau)\circ_i (T',\sigma',\tau')=(T\bullet_{\sigma(i),\tau'}T',\sigma\circ _i\sigma',\tau).$$
The unit for this composition, for the color $n$, is the element of $\OO(n;n)$ represented by the corolla $C_n$ equipped with the canonical left-right labelling. The symmetric group $\Sigma_k$ acts on $(T,\sigma,\tau)\in \mathcal{O}(n;m_1,\dots,m_k)$ by precomposition on the labelling $\sigma$ of the vertices $V(T)$.

\end{definition}

We further observe that, for each $m,n \in \N$, 
\[\OO(m;n)\cong \left \{ \begin{array}{ll}
    \Sigma_n & \text{ for } m = n,  \\
    \emptyset & \text{ when } m \neq n.
\end{array} \right . \]

The isomorphism $\OO(n;n) \cong \Sigma_n$ corresponds to labelling the leaves of a corolla $C_n$ in all possible ways. 
The unique arity $0$ operation in $\OO$ is represented by the special tree $\eta\in \OO(1;\emptyset)$.
An $\OO$-algebra, $\pp$, is precisely a one-colored operad. 
That is to say, 
$\pp$ has an underlying $\mathbb{N}$-graded object $\pp=\{\pp(n)\}_{n\in\mathbb{N}}$ in  $\mathcal{S}$. Moreover, $\pp$ admits actions
 $\OO(n;n)\times \pp(n)\rightarrow \pp(n)$ for all $n$ and thus $\pp$ has an underlying symmetric sequence. By definition, we have \[\OO(n;m_1,\ldots,m_k)\times \pp(m_1)\times\ldots\times \pp(m_k) \subset F\pp(n),\] where $F\pp$ is the free operad on the symmetric sequence $\pp$, and 
 \[ F\pp(n) = \coprod_{k \in \mathbb{N}} \left (\coprod_{(m_1, \dots, m_k) \in \mathbb{N}^k} \OO(n;m_1,\ldots,m_k)\times \pp(m_1)\times\ldots\times \pp(m_k)\right )_{\Sigma_k}\] so the action maps  \[\alpha:\OO(n;m_1,\ldots,m_k)\times \pp(m_1)\times\ldots\times \pp(m_k) \rightarrow \pp(n)\]
 induce maps $F\pp(n) \to \pp(n)$ for all $n$, and by the algebra axioms, this is precisely the data of a symmetric operad in $\mathcal{S}$ (See \cite[Example 1.5.6]{bm_resolution}). Note that, in particular, the $\circ_i$-compositions of an operad $\pp$ are governed by the trees with one internal edge in $\OO(n;m_1,m_2)$, where $n=m_1+m_2-1$.

\subsection{The relationship between operads and dendroidal spaces}\label{operads and dendroidal spaces}Reduced dendroidal spaces that satisfy a \emph{strict} Segal condition are closely related to one-colored operads. Explicitly, every operad $\pp$ can be viewed as a dendroidal space via the dendroidal \emph{nerve} construction that defines a functor \[N^d(\pp)(T)=\Hom_{\mathsf{Op}}(\om(T),\pp)\]as $T$ ranges over $\om$. The nerve of the free operad $\om(T)$ is just the representable dendroidal space $\om[T]:= \Hom_{\om}(-,T)$. A dendroidal space $X$ is the \emph{nerve} of an operad if, and only if, the Segal map of Definition~\ref{def: segal condition} is an isomorphism for all $T$ \cite[Lemma 6.4; Proposition 6.5]{cm3}. To put this altogether, there is an isomorphism of categories \[\OO\mathrm{-Alg}_{\mathcal{S}}\cong (\mathcal{S}^{\om^{op}})_{strict}\] where $\OO$ is the colored operad whose algebras are one-colored operads  (Definition~\ref{def:operad of operads}, below) and $(\mathcal{S}^{\om^{op}})_{strict}$ denotes the category of reduced dendroidal spaces satisfying the strict Segal condition. We will prove similar statements for ``thickened'' versions of $\om$ in Theorem~\ref{thm:W_0O-Omega_0} and Theorem~\ref{thm:WO-Omega}.

\section{The operad of brackets $\mathcal{BO}$}\label{sec:BO}
In this section we introduce a new topological operad called the \emph{operad of bracketed trees}. In short, the operad $B\OO$ captures a weak notion of an operad in the sense that a $B\OO$-algebra is a symmetric sequence with $\circ_i$-operations that are only associative up to higher homotopy.  The construction of the operad $B\OO$ allows one to check with relative ease whether a symmetric sequence with compositions assembles into an $\infty$-operad. In Theorem~\ref{thm: cact is BO alg}, we use this to show that normalized cacti admit such a structure. Moreover, we expect that this construction provides a general method that one can use to construct other examples of $\infty$-operads. 

One could instead use the classical Boardman-Vogt $W$-construction on the operad $\OO$ to obtain an operad $W\OO$ whose algebras are homotopy operads ({\em lax operads} in the language of  \cite{BrinkmeierThesis}). It is known to experts that bracketings in trees are related to this operad $W\OO$, but the precise details are difficult to find in the literature. (However, see \cite[Section ~2.3]{Obr19}, in particular Theorem 4, together with Remark~\ref{remark: polytopes} below, for an algebraic version of this in the case of non-symmetric operads.) In Appendix~\ref{sec: BO and WO} we will show that $B\OO$ identifies with a quotient of the operad $W\OO$. Bracketings in trees have also appeared elsewhere, see eg. \cite{devadoss2009realization, DFRS15}, and the parenthesizations of~\cite[2.6]{Sinha04}.

\subsection{Bracketings of trees}\label{sec:bracketings}

We define in this section the poset of bracketings of a tree, starting with the definition of a bracketing: 

\begin{definition}\label{def:bracketing}
A tree is called \emph{large} if it has at least two vertices (or equivalently, at least one internal edge). A set $\{S_{j}\}_{j\in J}$ of subtrees of a tree $T$ is \emph{nested} if, for any $i,j\in J$, the set of common vertices $V(S_{i})\cap V(S_{j})$ is either $V(S_{i})$, $V(S_{j})$ or empty.
A \emph{bracketing} $B$ of a tree $T$ is a (possibly empty) collection $B=\{S_{j}\}_{j\in J}$ of nested large proper subtrees of $T$. 
\end{definition}

Recall from Section~\ref{sec:trees} that a subtree of $T$ is a tree $S$ whose vertices are a subset of the vertices of $T$, and whose half-edges are all the half-edges in $T$ attached to such vertices. Therefore, a subtree is completely determined by its vertices. With this in mind, we will represent bracketings as in Figure~\ref{fig:three_brackets}.

\begin{figure}[ht]
\centering
\includegraphics[width=0.25\textwidth]{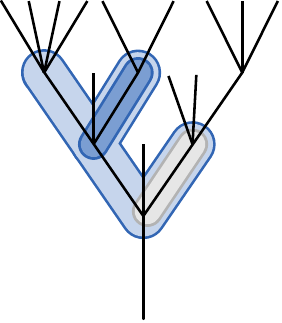}
\caption{Example of a tree bracketing with 3 nested subtrees.}\label{fig:three_brackets}
\end{figure}

\medskip

\begin{definition}\label{def: poset of brackets}
Bracketings of a tree $T$ form a poset of bracketings $\mathcal{B}(T)$ with the relation $B'\leq B$ if $B'\subseteq B$.
\end{definition}
We denote the geometric realisation of the nerve of the poset $\mathcal{B}(T)$ by $|\mathcal{B}(T)|$. A point in 
$$|\mathcal{B}(T)|=\coprod_{r\ge 0}N_r\mathcal{B}(T)\times \Delta^r/\sim$$ is a pair $(B,t)$ with $B=B_0\subset \dots\subset B_r$ a sequence of bracketings and $t\in \Delta^r$. Such a pair $(B,t)$
can be interpreted as a {\em weighted bracketing} with underlying set of brackets $B_r=\cup_{i=0}^r B_i$ and weights given by 
$$t=(1,t_1,\dots,t_r)\in \Delta^r=\{1=t_0\ge t_1\ge \dots\ge t_r\ge 0\}$$
where we assign the weight $t_0=1$ to all brackets in $B_0$, and for each $1\le i\le r$, the weight $t_i$ to all brackets in $B_i\backslash B_{i-1}$. 
In particular, a weighted bracketing with all brackets having weight 1 corresponds to a vertex $B=B_0$ in the nerve of the poset. Also, the equivalence relation on the realization implies that a bracket of weight 0 can be discarded. 
(See also  Appendix~\ref{sec: BO and WO} and in particular the proof of Lemma~\ref{-cubical} where this point of view is used to relate $B\OO$ to the operad $W\OO$.)

\begin{example}\label{example: corolla}
     If $T=C_n$ then $T$ does not admit any large subtree, therefore $\mathcal{B}(T)=\{\emptyset\}$ only has the empty (or trivial) bracketing.
\end{example}

\begin{example}\label{example: pentagon}
Let $T$ be the tree $\pentatree$, then the space $|\mathcal{B}(T)|$ is depicted in Figure \ref{fig:pentagon-hexagon} (left). Note that the initial object in the poset is the empty bracket, in the centre of the pentagon.

     \begin{figure}[h!t]
        \centering
        \begin{minipage}{0.49\textwidth}
            \centering\includegraphics[width=0.9\linewidth]{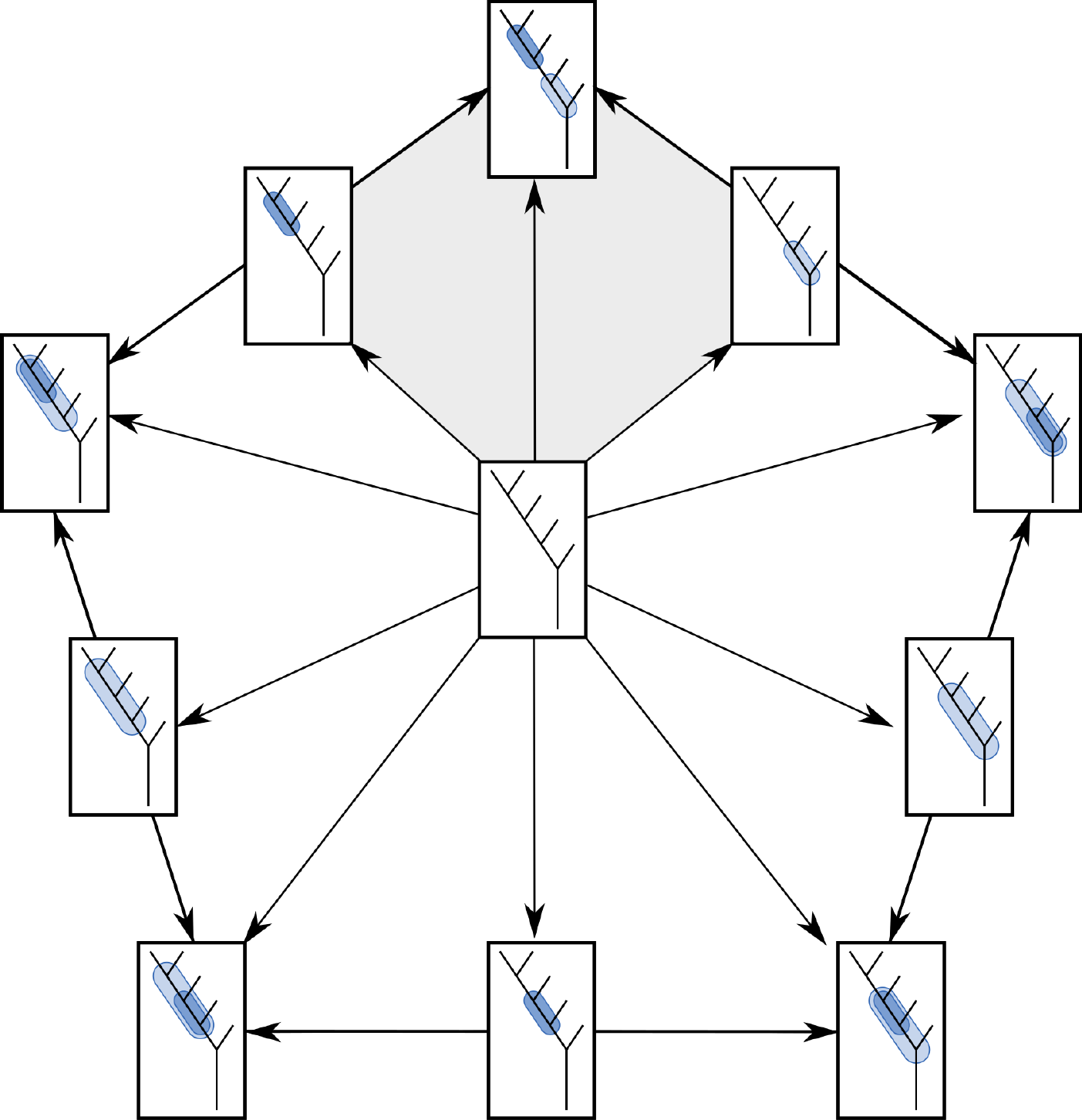}
        \end{minipage}
        \begin{minipage}{0.48\textwidth}
        \centering\includegraphics[width=0.9\linewidth]{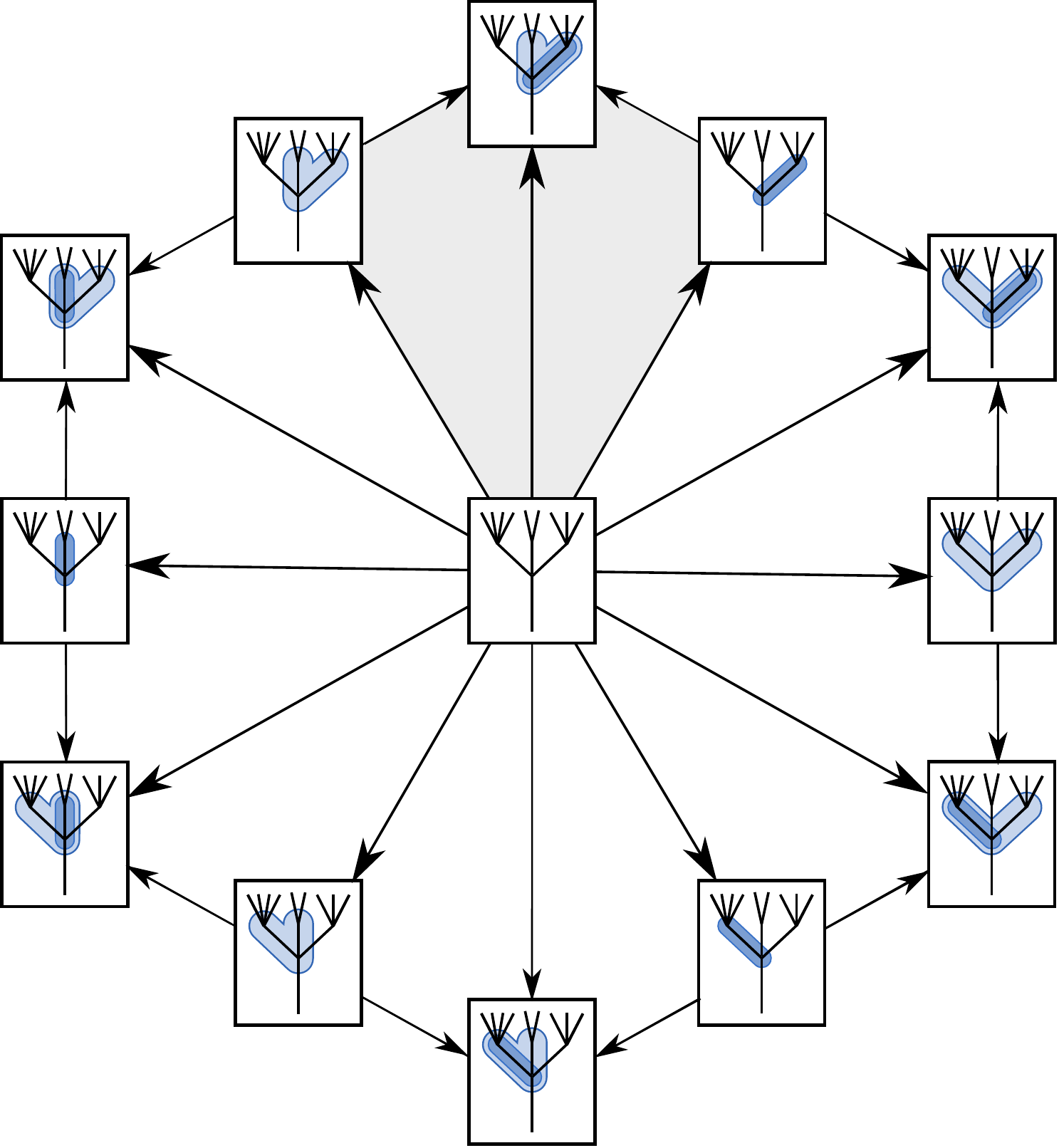}
        \end{minipage}
        \caption{Geometric realization of the poset $\mathcal{B}(T)$ of Examples \ref{example: pentagon} and \ref{example: hexagon-trivalent}.}\label{fig:pentagon-hexagon}
        \end{figure}

    More generally, let $T_n$ be a tree with $n$ vertices such that no vertex is connected to more than two inner edges. For such trees, the set of vertices can always be given a total ordering, for instance by constructing a list starting with a vertex $v$ connected to only one internal edge, and defining the next element of the list to be the vertex sharing an edge with $v$ that has not yet been listed. Then a bracket of $T_n$ can be immediately identified with a meaningful placement of parentheses on a word with $n$ letters where the word is represented by the ordered set of vertices. Therefore, $|\mathcal{B}(T_n)|$ can always be identified with the $n$-th associahedron (see also Remark~\ref{remark: polytopes} for another approach to this statement).
\end{example}

\begin{example}\label{example: hexagon-trivalent}\
     Consider a tree $T$ with three inner edges all meeting at a single vertex. Note that the poset of bracketings depends only on the relative positions of the vertices (or analogously, the inner edges) of the tree $T$, and is independent of the number of leaves at each vertex. Therefore, the realization poset of bracketings of $T$ is the one depicted in the Figure \ref{fig:pentagon-hexagon} (right), using as an example the tree $T=\trivalenttree$. 

\end{example}

\begin{example}\label{example: four-valent}
     Figure \ref{fig: four-valent} depicts the realisation of the poset of bracketings of a tree $T$ with four inner edges meeting at a single vertex. Note that by fixing a large subtree $S$ of $T$, the realisation of the subposet of bracketings of $T$ containing $S$ will correspond to a subspace of the boundary of $|\mathcal{B(T)}|$. Each boundary face of top dimension is then associated to a subtree $S$ of $T$, and two such faces $S_1,S_2$ share a subface if $\{S_1,S_2\}$ is nested.
        \begin{figure}[h!t]
        \centering
        \begin{minipage}{0.45\textwidth}
            \centering\def\svgwidth{0.7\columnwidth}
\begingroup%
  \makeatletter%
  \providecommand\color[2][]{%
    \errmessage{(Inkscape) Color is used for the text in Inkscape, but the package 'color.sty' is not loaded}%
    \renewcommand\color[2][]{}%
  }%
  \providecommand\transparent[1]{%
    \errmessage{(Inkscape) Transparency is used (non-zero) for the text in Inkscape, but the package 'transparent.sty' is not loaded}%
    \renewcommand\transparent[1]{}%
  }%
  \providecommand\rotatebox[2]{#2}%
  \newcommand*\fsize{\dimexpr\f@size pt\relax}%
  \newcommand*\lineheight[1]{\fontsize{\fsize}{#1\fsize}\selectfont}%
  \ifx\svgwidth\undefined%
    \setlength{\unitlength}{236.46444966bp}%
    \ifx\svgscale\undefined%
      \relax%
    \else%
      \setlength{\unitlength}{\unitlength * \real{\svgscale}}%
    \fi%
  \else%
    \setlength{\unitlength}{\svgwidth}%
  \fi%
  \global\let\svgwidth\undefined%
  \global\let\svgscale\undefined%
  \makeatother%
  \begin{picture}(1,0.47678853)%
    \lineheight{1}%
    \setlength\tabcolsep{0pt}%
    \put(0,0){\includegraphics[width=\unitlength,page=1]{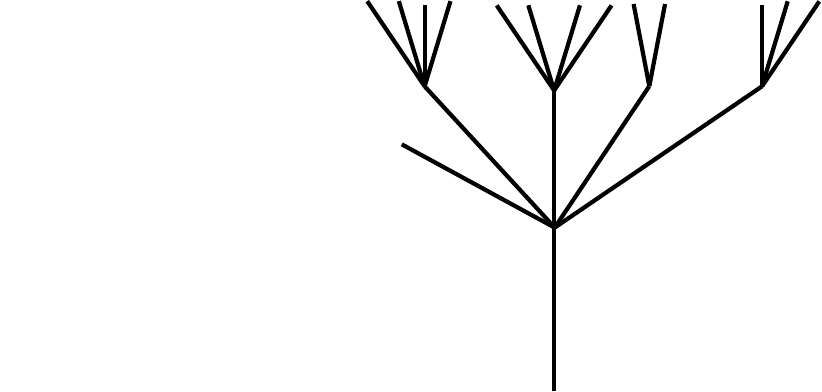}}%
    \put(0.3340831,0.20473176){\makebox(0,0)[rt]{\lineheight{1.25}\smash{\begin{tabular}[t]{r}$T=$\end{tabular}}}}%
  \end{picture}%
\endgroup%

        \end{minipage}
        \begin{minipage}{0.5\textwidth}
        \centering\includegraphics[width=0.9\linewidth]{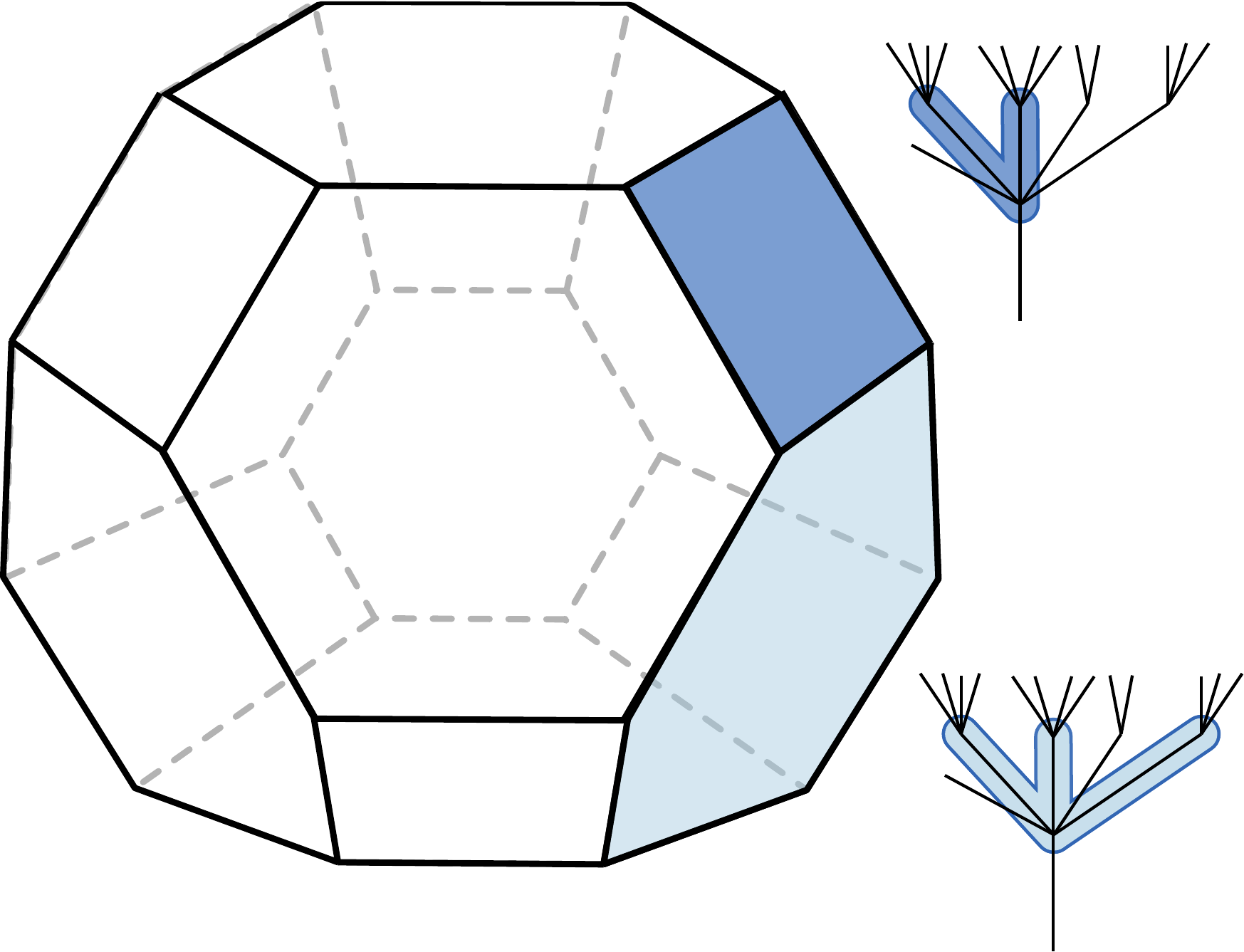}
        \end{minipage}
        \caption{Tree satisfying the conditions of Example \ref{example: four-valent} together with the geometric realisation of its poset of bracketings.}\label{fig: four-valent}
        \end{figure}
\end{example}

\begin{remark}\label{remark: polytopes}
The spaces $|\B(T)|$ are closely related to the abstract polytopes defined in \cite{Obr19}. In fact, we can show that $|\B(T)|$ identifies with the hypergraph polytope of the edge-graph $\mathbf{H}_T$ of $T$, as defined in \cite[Section 2.2.1]{Obr19}. The set of vertices of $\mathbf{H}_T$ is the set of inner edges of $T$, and two such share an edge if they have a common vertex. Then a subset $\mathbf{S}$ of vertices of $\mathbf{H}_T$ uniquely defines a subforest $\langle \mathbf{S}\rangle$ of $T$ whose internal edges are precisely the elements of $\mathbf{S}$, and each tree in this forest is necessarily large because it has an inner edge (see \cite[Section 2.2.1, Lemma 3]{Obr19}). Then we have an order reversing bijection $b$ between the abstract polytope of the edge-graph of $T$ and $\B(T)$, which can be recursively defined as follows: using the notation established in \cite{Obr19}, we take the construct $V(\mathbf{H}_T)$ to the empty bracketing, and if $\mathbf{H}_T\setminus Y\rightsquigarrow \mathbf{H}_{T_1},\dots,\mathbf{H}_{T_n}$, we take the construct $Y\{C_1,\dots,C_n\}$ to the bracketing $\{\langle \mathbf{H}_{T_1}\rangle,\dots,\langle \mathbf{H}_{T_n}\rangle,b(C_1),\dots,b(C_n)\}$. The definition of the constructs guarantees that these sets are nested and therefore define a bracketing, and it is simple to check that this is an order reversing bijection.
\end{remark}

\begin{lemma}\label{bracketing-space-realization}
    For any tree $T$, the space $|\mathcal{B}(T)|$, is contractible. 
\end{lemma}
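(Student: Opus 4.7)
The plan is to observe that the poset $\mathcal{B}(T)$ has a minimum element and then invoke the standard fact that the nerve of a poset with a minimum (or maximum) element is contractible.

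First, I would verify that the empty collection $\emptyset$ is a valid bracketing of $T$ in the sense of Definition~\ref{def:bracketing}: there are no subtrees to be large or proper, and the nestedness condition is vacuously satisfied. Hence $\emptyset\in\mathcal{B}(T)$. The authors already flag this in Example~\ref{example: pentagon}, where the empty bracket is identified as the centre of the pentagon.

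Next, observe that for every $B\in\mathcal{B}(T)$ we have $\emptyset\subseteq B$, so $\emptyset\leq B$ in the poset ordering of Definition~\ref{def: poset of brackets}. Thus $\emptyset$ is the unique minimum element of $\mathcal{B}(T)$, i.e.\ an initial object when $\mathcal{B}(T)$ is viewed as a category.

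The final step uses the standard categorical fact that a small category with an initial object has contractible classifying space. Concretely, the constant functor $c_\emptyset:\mathcal{B}(T)\to\mathcal{B}(T)$ at $\emptyset$ and the identity functor are related by a natural transformation $c_\emptyset\Rightarrow \mathrm{id}$ whose component at $B$ is the unique morphism $\emptyset\leq B$. Applying the nerve, this natural transformation yields a simplicial homotopy between the constant map to the $\emptyset$-vertex and the identity on $N\mathcal{B}(T)$, so $|\mathcal{B}(T)|$ is contractible. I do not anticipate any real obstacle here; the statement is essentially a one-line consequence of the existence of the empty bracketing once Definitions~\ref{def:bracketing} and \ref{def: poset of brackets} are in place.
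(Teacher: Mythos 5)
Your proof is correct and follows exactly the paper's argument: the empty bracketing is the minimal element of $\mathcal{B}(T)$, and a poset with an initial object has contractible nerve. The only difference is that you spell out the standard natural-transformation argument, which the paper leaves implicit.
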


\begin{proof}
The contractiblity of the space $|\mathcal{B}(T)|$ follows directly from the fact that the poset $\B(T)$ has a minimal element, namely the empty bracketing. 
\end{proof}

\subsection{An  operad of bracketings}\label{sec: operad of bracketings}

We'll use the bracketings $\mathcal{B}(T)$ to construct a topological operad. 
Let the collection
$$B\OO(n;m_1,\dots,m_k)=\coprod_{(T,\sigma,\tau)\in\mathcal{O}(n;m_1,\dots,m_k)}|\mathcal{B}(T)|$$ define the $\mathbb{N}$-coloured symmetric sequence $B \OO.$
So, elements of $B\OO(n;m_1,\dots,m_k)$ are tuples $(T,\sigma,\tau,B,t)$ where $(T,\sigma,\tau)$ is an element of $ \OO(n;m_1,\dots,m_k)$ (Definition~\ref{def:operad of operads}) and $(B,t)$ is a weighted bracketing of $T$ (ie. a point in $|\mathcal{B}(T)|$).

To define operadic composition in $B\OO$, we use the composition of trees in $\OO$ and induce a bracketing of the resulting tree. Let $(T,\sigma,\tau,B)$ and $(T',\sigma',\tau',B')$ be labeled trees with bracketings. 
The composition in $\OO$ (Definition \ref{def:operad of operads})  is given by the substitution of $T'$ into the vertex $\sigma(i)\in V(T)$,  \[(T,\sigma,\tau)\circ_{i} (T',\sigma',\tau') = (T\bullet_{\sigma(i),\tau'} T', \sigma \circ_i \sigma', \tau).  \] 
Since $T'$ is canonically a subtree of $T\bullet_{\sigma(i),\tau'} T'$, the bracketing $B'$ on $T'$ defines a nested collection of subtrees of $T\bullet_{\sigma(i),\tau'} T'$.
We also construct a nested collection of subtrees $\tilde B  =  \{\tilde S_j\}_{j \in J}$ on $T\bullet_{\sigma(i),\tau'} T'$ that is induced by the bracketing $B = \{S_j\}_{j \in J}$ on $T$. 
If $T' \neq \eta$, then $\tilde B \cong B$ is given by
 \begin{equation} \label{eq: tilde Sj} \tilde S_j = \left \{
 \begin{array}{ll}
    S_j  &  \text{ if } \sigma(i) \notin V(S_j), \\
    S_j \bullet_{\sigma(i),\tau'} T'  & \text{ if } \sigma(i) \in V(S_j).
 \end{array} \right .  \end{equation}  
If $T' = \eta$, then $\tilde B = \{\tilde S_j\}_{j \in J}$ is defined in the same way, unless  $\sigma(i) \in V(S_j)$ and $S_j$ has two vertices, in which case $S_j \bullet_{\sigma(i),\tau'} \eta $ is a corolla and 
is discarded as it is not large. That is, we replace $J$ with another indexing set $J'\subset J$, which is the subset of indices $j$ such that $S_j$ is large. 

We define a bracketing of the tree $T\bullet_{\sigma(i),\tau'} T'$ by
\begin{equation}\label{eq: Bracket comp}B''=\left\{\begin{array}{ll}\tilde B\cup B'\cup \{T'\} & \textrm{if}\  T'\ \textrm{is large}\\
 \tilde B & \textrm{else.}
 \end{array}\right. \end{equation}
See Figure \ref{fig:BOcomposition2}. 
This defines a composition of bracketings of trees. This composition is associative as follows.
Suppose $S_j\subset T$ is a bracket with only two vertices $v$ and $w$, and $T'$ is a tree with at least two vertices. If we first compose $\eta$ in $v$ and then $T'$ in $w$, the bracket $S_j$ is discarded during the first composition, and then replaced by a new bracket $T'$. Reversing the order of these two compositions yields the same result because first composing  $T'$ in $w$ will create a new bracket $T'$, and $S_j$ will not be discarded, but composing further $\eta$ in $v$ will equate $S_j$ and $T'$.
Otherwise, the associativity of the composition follows from the associativity on the composition in $\OO$.

The composition  also respects inclusions and thus is a poset map 
\begin{equation}\label{eq: BO comp}
\begin{tikzcd}  \mathcal{B}(T)\times \mathcal{B}(T')\ \arrow[r] &  \mathcal{B}(T\bullet_{\sigma(i),\tau'} T'). \end{tikzcd}
\end{equation}
The realization of the poset map \eqref{eq: BO comp} induces a map between the geometric realisations of the nerve of the posets.

\begin{figure}[h!t]
    \centering\def\svgwidth{0.8\columnwidth}
    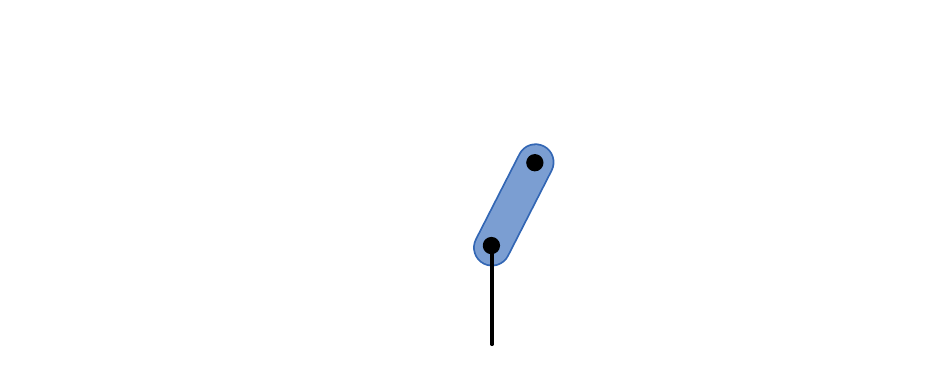
    \caption{Example of composition in $B\OO$ with labelling of the vertices omitted for simplicity.}\label{fig:BOcomposition2}
\end{figure}

Also recall that the unary elements of $\OO$, i.e.~the elements of $\OO(n;n)$ for some $n$, are given by labeled corollas. 
Since there are no non-trivial bracketings of corollas, unary elements of $B\OO$ have the form $(C_n, \sigma, *, \emptyset , \emptyset) \in B\OO(n;n) $ with $\sigma \in \Sigma_n$. In particular, the $n$-coloured identity for the composition $\circ $ in $B \OO$ is given by $(C_n, id_n, *, \emptyset , \emptyset) \in B\OO(n;n) $.
Therefore $B\OO$ is an operad.

\begin{definition}\label{def: BO} 
The \emph{operad of bracketed trees} $B\OO$ is the $\mathbb{N}$-coloured topological operad with underlying symmetric sequence
$$B\OO(n;m_1,\dots,m_k)=\coprod_{(T,\sigma,\tau)\in\mathcal{O}(n;m_1,\dots,m_k)}|\mathcal{B}(T)|$$ and composition given by combining the composition in $\OO$ with the map \eqref{eq: BO comp} described above. 
\end{definition}

\begin{remark}\label{rem:BO as poset operad}
The topological operad $B\OO$ is the realization of an operad in posets. Indeed, the space $B\OO(n;m_1,\dots,m_k)$ is the realization of the poset of elements $(T,\sigma,\tau)$ of $\OO(n;m_1,\dots,m_k)$ together with a bracketing of $T$, where two elements are comparable only if they have the same underlying element of $\OO$. Likewise, the operad structure is defined as the realization of a map on the level of posets. 
\end{remark}

\subsection{$B\OO$-algebras}\label{sec: BO-alg} 
A $B\OO$-\emph{algebra} is an operad whose $\circ_i$-compositions are associative up to all higher homotopies. In particular, a $B\OO$-algebra $\pp=\{\pp(n)\}_{n\in\mathbb{N}}$ has an underlying \emph{symmetric sequence}. To see this, we note that the labelling of the leaves of a corolla $(C_n, \tau, *, \emptyset , \emptyset) \in B\OO(n;n)$ identifies with elements of the symmetric group and we have isomorphisms $$B\OO(n;n)\cong \OO(n;n)\cong \Sigma_n.$$ The action \[\begin{tikzcd} B\OO(n;n)\times \pp(n)\arrow[r]& \pp(n) \end{tikzcd}\] makes $\pp=\{\pp(n)\}_{n\in\mathbb{N}}$ into a symmetric sequence. 

$B\OO$-algebras also have a notion of operadic {\em $\circ_i$ composition}. To see this, recall that such compositions are encoded in the operad $\OO$ by the trees with exactly two vertices, one attached to the $i$th incoming edge of the other.  As such trees admit no large, proper subtrees, they admit no non-trivial bracketing and we have isomorphisms for any $n,m\ge 0$
$$
B\OO(m+n-1;m,n)|_{V(T)\le 2}\cong \OO(m+n-1;m,n)|_{V(T)\le 2}$$
between the components of the tuples $(T,\sigma,\tau,\emptyset,0)$ (resp. $(T,\sigma,\tau)$) with $T$ having at most two vertices. 
It follows then that $\pp$ is equipped with operadic $\circ_i$-compositions. 

A $B\OO$-algebra is not in general an operad, however. The brackets that arise in trees with more than two vertices capture the different choices one has in iterated compositions of $\circ_i$ operations. More explicitly, if $\{\pp(n)\}_{n\in \mathbb{N}}$ is a $B\OO$--algebra, then for any collection of elements $x_i\in\pp(m_i)$ that decorate the vertices of a tree $(T,\sigma,\tau)\in \OO(n;m_1,\ldots,m_k)$, we have a chosen composition of those elements, namely the one determined by $(T,\sigma,\tau,\emptyset,\emptyset)\in B\OO(n;m_1,\ldots,m_k)$. 
This ``unbracketed'' tree sits in the middle of a polytope of all possible elements $(T,\sigma,\tau,B,s)$ for any bracketing $B$, as in Figure~\ref{fig:pentagon-hexagon}. The corners of this polytope correspond to the possible maximal bracketings of $T$ (the maximal elements of $\B(T)$).
Just like the corners of the Stasheff polytopes give all the possible ways to bracket a $k$--fold multiplication,
these maximal bracketings correspond precisely to the possible ways to bracket the  composition of $\circ_i$ operations, which are those defined using trees with exactly two vertices.
The polytopes arising from the posets of bracketing in trees can be thought of as an operadic analogue of the Stasheff polytopes.

\begin{remark}\label{quasi-operad remark}
In \cite[Definition 1.1.1]{K05}, a \emph{quasi-operad} is a symmetric sequence $\pp=\{\pp(n)\}_{n\in\mathbb{N}}$ together with operadic $\circ_i$-compositions and no further structure. In this way, a $B\OO$-algebra is an extension of a quasi-operad. The operad $B\OO$ is closely related to the $W$-construction of $\OO$, whose algebras go under the name {\em lax operads}, see Appendix~\ref{sec: BO and WO}, where we show that $B\OO$-algebras can be described as strictly symmetric lax operads.
\end{remark}

\section{Thickening the category $\Omega$}\label{sec:thickening-omega}

We have seen that operads are $\OO$-algebras. 
Also recall from Section~\ref{operads and dendroidal spaces} that operads can be described as strict Segal dendroidal spaces.  
The dendroidal category $\om$ is defined as a full subcategory of the coloured operads generated by trees. To obtain a similar description of $B\OO$--algebras as certain ``homotopy dendroidal Segal spaces,'' 
we construct a topological category $\widetilde\om_0$ that is a category with the same objects as $\om$ but its spaces of morphisms are built using posets similar to the posets used to define $B\OO$. 
Theorem~\ref{thm:W_0O-Omega_0} establishes that this category, $\widetilde\om_0$, has the desired property that strict reduced Segal $\widetilde\om_0^{op}$--spaces are precisely $B\mathcal{O}$-algebras. In Section~\ref{WOdendroidal}, we then show how rectification of diagrams can be used to produce an actual Segal dendroidal space from such a homotopy version of a dendroidal space.

Given any category $\mathcal{K}$ with a discrete set of objects, Leitch \cite{Leitch} constructed a new category $\widetilde{\mathcal{K}}$ with the property that $\widetilde{\mathcal{K}}$--diagrams are homotopy coherent $\mathcal{K}$--diagrams. A similar enrichment (the \emph{explosion category}) was also used by Segal \cite[Appendix B]{Segal74} to relate his $\Gamma$--space approach to infinite loop spaces to the operadic approach of Boardman-Vogt and May. Because $\widetilde\om_0$--diagrams are homotopy coherent $\om_0$--diagrams, one can expect that the category $\widetilde\om_0$ is related to this construction of Leitch applied to $\om$. In Appendix~\ref{sec:hatomega}, we construct  an equivalence between these two categories, and show that strict Segal $\widetilde\om$--spaces are closely related to $W\OO$--algebras.

\subsection{Bracketing $\Omega$ and the category $\widetilde\om_0$}\label{sec: Omega W}

Recall from Section~\ref{sec:omega} that the objects of $\om$ are planar isomorphism classes of planar rooted trees. Morphisms in $\om$ are 
compositions of inner and outer face maps, degeneracies and isomorphisms of trees. Inner face maps $\partial_{e}:T/e\rightarrow T$ create inner edges and correspond to operadic composition, while outer face maps  
are subtree inclusions and are associated to projection maps. 
A degeneracy creates a vertex that is adjacent to exactly two edges. 
The category $\widetilde\om_0$ is a version of $\om$ with the same set of objects, but 
with the realization of a poset of bracketings over each composition of inner face maps.

We define the morphism spaces of $\widetilde\om_0$ as follows.
Let $g:S\rightarrow T$ be a morphism in $\om$. 
For each vertex $v\in V(S)$, let $C_v\subset S$ denote the corolla of the vertex $v$ that is, 
$C_v=i_v(C_{|in(v)|})$ where $i_v:C_{|in(v)|}\to S$ is the composition of outer faces in $\om$ sending the vertex of the corolla $C_{|in(v)|}$ to $v$.
Since $g$ is alternatively considered as a map of operads between $\om(S)$ and $\om(T)$,
the image in $S$ of $C_v$ under $g$  is a subtree in $T$, which we denote $$g(C_v)\ \subset \ T.$$ Note that the trees $g(C_v)$ are precisely the subtrees of $T$ that correspond to expansion of vertices into subtrees, going from $S$ to $T$, or collapsed by $g^{op}:T\to S$ in the opposite category $\om^{op}$. These subtrees correspond to the part of $g$ made out of inner face maps.

For a vertex $v\in V(S)$, let $B_v^g$ be a bracketing of $g(C_v)$ as defined in Definition~\ref{def:bracketing}. 
We define a poset $\mathcal{L}_g$ whose objects are tuples $(B_v^g)_{v\in V(S)}$ of bracketings of the trees $g(C_v)$. The poset relation is componentwise inclusion. Taking the realization of these posets, for each morphism $g$ we associate  the space $$L_g:=\prod\limits_{v\in V(S)}|\B(g(C_v))|$$
where $\B(g(C_v))$ is the poset of bracketings of the tree $g(C_v)$ as defined in Definition~\ref{def: poset of brackets}.  Note also that $|\B(g(C_v))|=*$ if $g(C_v)$ admits only the trivial bracketing. 

\begin{example}\label{ex:omega-w-f}
    Consider the morphism $f\in\Hom_\om(R,S)$ of Figure~\ref{fig:K-f}.   Since the image of each corolla under $f$ only admits a trivial bracketing,  \[L_f=\left|\B\left(\bluetree\right)\right|\times \left|\B\left(\pinktree\right)\right|\times \left|\B\left(\orangetree\right)\right|\times \left|\B\left(\yellowtree\right)\right|= *.\]
    \begin{figure}[h!t]
    \centering\def\svgwidth{0.4\columnwidth}
\begingroup%
  \makeatletter%
  \providecommand\color[2][]{%
    \errmessage{(Inkscape) Color is used for the text in Inkscape, but the package 'color.sty' is not loaded}%
    \renewcommand\color[2][]{}%
  }%
  \providecommand\transparent[1]{%
    \errmessage{(Inkscape) Transparency is used (non-zero) for the text in Inkscape, but the package 'transparent.sty' is not loaded}%
    \renewcommand\transparent[1]{}%
  }%
  \providecommand\rotatebox[2]{#2}%
  \newcommand*\fsize{\dimexpr\f@size pt\relax}%
  \newcommand*\lineheight[1]{\fontsize{\fsize}{#1\fsize}\selectfont}%
  \ifx\svgwidth\undefined%
    \setlength{\unitlength}{129.46984863bp}%
    \ifx\svgscale\undefined%
      \relax%
    \else%
      \setlength{\unitlength}{\unitlength * \real{\svgscale}}%
    \fi%
  \else%
    \setlength{\unitlength}{\svgwidth}%
  \fi%
  \global\let\svgwidth\undefined%
  \global\let\svgscale\undefined%
  \makeatother%
  \begin{picture}(1,0.60327419)%
    \lineheight{1}%
    \setlength\tabcolsep{0pt}%
    \put(0,0){\includegraphics[width=\unitlength,page=1]{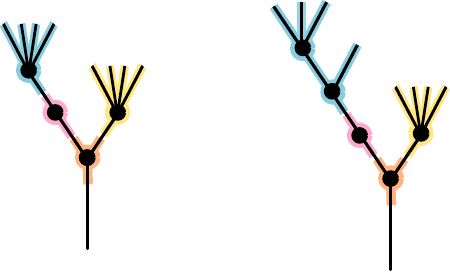}}%
    \put(0.48126138,0.2906449){\color[rgb]{0,0,0}\makebox(0,0)[t]{\lineheight{1.25}\smash{\begin{tabular}[t]{c}$\xrightarrow{\phantom{g}f=\partial_e\phantom{g}}$\end{tabular}}}}%
    \put(0.6704974,0.40782566){\color[rgb]{0,0,0}\makebox(0,0)[t]{\lineheight{1.25}\smash{\begin{tabular}[t]{c}$e$\end{tabular}}}}%
  \end{picture}%
\endgroup%

    \caption{Example of map $f$ in $\om$ and the subtrees $f(C_v)$.}\label{fig:K-f}
    \end{figure}
 \end{example}
 
 \begin{example}\label{example: polytope-in-omega} 
    Let $s$ be the morphism in Figure \ref{fig:map_s}. 
    By Example~\ref{example: pentagon}, if $s(C_v)$ has $3$ vertices such that no vertex is connected to more than two inner edges, then $|\mathcal{B}(s(C_v))|$ is the $3$rd associahedron, which is an interval.
    As in Example \ref{example: hexagon-trivalent}, when $s(C_v)$ is a tree whose three internal edges meet at a single vertex, the realization poset $|\mathcal{B}(s(C_v))|$ corresponds to a hexagon.
    Thus $L_s$ is identified with the hexagonal prism of Figure \ref{fig:L_s}.

\begin{figure}[h!t]
        \hfill
        \subfigure[Map $s$ in $\om$ and the subtrees $s(C_v)$.]{\label{fig:map_s}
        \centering\def\svgwidth{0.38\columnwidth}
\begingroup%
  \makeatletter%
  \providecommand\color[2][]{%
    \errmessage{(Inkscape) Color is used for the text in Inkscape, but the package 'color.sty' is not loaded}%
    \renewcommand\color[2][]{}%
  }%
  \providecommand\transparent[1]{%
    \errmessage{(Inkscape) Transparency is used (non-zero) for the text in Inkscape, but the package 'transparent.sty' is not loaded}%
    \renewcommand\transparent[1]{}%
  }%
  \providecommand\rotatebox[2]{#2}%
  \newcommand*\fsize{\dimexpr\f@size pt\relax}%
  \newcommand*\lineheight[1]{\fontsize{\fsize}{#1\fsize}\selectfont}%
  \ifx\svgwidth\undefined%
    \setlength{\unitlength}{123.50406647bp}%
    \ifx\svgscale\undefined%
      \relax%
    \else%
      \setlength{\unitlength}{\unitlength * \real{\svgscale}}%
    \fi%
  \else%
    \setlength{\unitlength}{\svgwidth}%
  \fi%
  \global\let\svgwidth\undefined%
  \global\let\svgscale\undefined%
  \makeatother%
  \begin{picture}(1,0.54421268)%
    \lineheight{1}%
    \setlength\tabcolsep{0pt}%
    \put(0,0){\includegraphics[width=\unitlength,page=1]{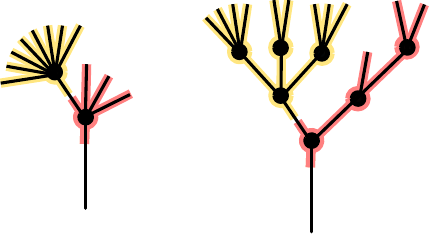}}%
    \put(0.43933196,0.29480681){\makebox(0,0)[t]{\lineheight{1.25}\smash{\begin{tabular}[t]{c}$\xrightarrow{s}$\end{tabular}}}}%
  \end{picture}%
\endgroup%
}
        \hfill
        \subfigure[Space $L_s$ for the map $s$ of Figure \ref{fig:map_s}]{\label{fig:L_s}
        \centering\def\svgwidth{0.49\columnwidth}
        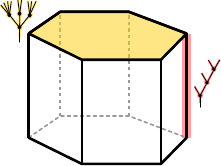}
        \hfill \vspace{-12pt}
        \caption{A map $s$ in $\Omega$ and its corresponding space $L_s$.}
        \end{figure}
\end{example}
The space of morphisms between any two objects in $\widetilde\om_0$ is
$$\Hom_{\widetilde\om_0}(S,T)=\coprod\limits_{g\in\Hom_{\om}(S,T)} L_g.$$ It remains to define composition in $\widetilde\om_0$. To do this, we first define a map of posets 
\begin{equation}\label{poset_comp}
\begin{tikzcd}\mathcal{L}_g\times \mathcal{L}_f\ \arrow[r]& \mathcal{L}_{g\circ f}\end{tikzcd}
\end{equation}
for any two morphisms $f:R\to S$ and $g:S\to T$ in $\om$, then  we take the realization of this composition map to get a composition of spaces $L_g$. 
Let $(B^g_v)_{v\in V(S)}\in \mathcal{L}_g$ and $(B^f_w)_{w\in V(R)}\in \mathcal{L}_f$ be two collections of bracketings. So 
for each $v\in V(S)$, $B^g_v$ is a bracketing of $g(C_v)\subset T$  and for each $w\in V(R)$,
$B^f_w$ is a bracketing of the tree $f(C_w)\subset S$. 
To define the image of \eqref{poset_comp}, we construct a bracketing of the tree $(g\circ f)(C_w)$ from the bracketings of $f$ and $g$.

\medskip

Fix a vertex $w\in V(R)$. For each $v\in f(C_w)\subset S$, there is the subtree $g(C_v)\subset (g\circ f)(C_w)$, as well as a bracketing $B^g_v$ of $g(C_v)$. Also, for each bracket in $S_i\in B^f_w$, the image $g(S_i)$ is a subtree of $(g\circ f)(C_w)$. Therefore we have the following collections of subtrees in $(g\circ f)(C_w)$:
\begin{align*}
&\Tilde{B}^g_{f(w)} = \bigcup_{v\in f(C_w)} B_v^g \ =\  \{S_j:\; S_j\in B^g_v \textrm{ and } v\in f(C_w)\}\\
&\Tilde{B}^{g\circ f}_{w}=\ \{g(C_v): \; v\in f(C_w) \textrm{ and } g(C_v)\subsetneq (g\circ f)(C_w)\textrm{ is large}\}\\
&\Tilde{B}^f_w=\ \{g(S_i): \; S_i\in B^f_w \textrm{ and } g(S_y)\subsetneq (g\circ f)(C_w) \textrm{ is large}\}.
\end{align*}

All of these are collections of proper large subtrees of $(g\circ f)(C_w)$. We set the bracketing $\Tilde{B}_w$ of $(g\circ f)(C_w)$ to be the union
\[\Tilde{B}_w:= \Tilde{B}^g_{f(w)} \cup \Tilde{B}^{g\circ f}_{w} \cup \Tilde{B}^f_w.\]
To see that $\Tilde{B}_w$ is a bracketing of $(g\circ f)(C_w)$, it remains to verify that 
this collection is comprised of nested subtrees. First, each $B^g_v\subset \Tilde{B}^g_{f(w)}$ is a bracketing of $g(C_v)\subset (g\circ f)(C_w)$, so it is nested. 
Moreover, the subtrees $g(C_v)$ are all disjoint and each tree of $\Tilde{B}^g_{f(w)}$ is contained in a tree of $\Tilde{B}^{g\circ f}_w$, so the union $\Tilde{B}^g_{f(w)}\cup \Tilde{B}^{g\circ f}_w$ is nested too. The
$\Tilde{B}^{g\circ f}_w\cup \Tilde{B}^f_w$ is also nested, since each bracket $g(C_v)$ in the first set is included in each $g(S_y)$ of the second set whenever $v\in S_y$ and otherwise is disjoint from it. Hence $\Tilde{B}^g_{f(w)}\cup \Tilde{B}^f_w$ is also nested, and thus $\Tilde{B}_w$ is nested.

\medskip

Define the composition $(B^f_v)_{v\in V(S)}\circ (B^g_w)_{w\in V(R)}$ to be the collection \begin{equation}\notag
(\tilde B_w)_{w\in V(R)}\in \mathcal{L}_{g\circ f}.
\end{equation} 
Associativity of this composition is analogous to the associativity of the $B\OO$ composition in Section~\ref{sec: operad of bracketings}. In most cases, the composition is associative because vertex substitution is associative. In a composition with a degeneracy, a vertex is removed and so a bracket may be discarded if it is no longer large. Any discarded bracket is recreated in a subsequent composition if it should not have been discarded in the total composition. 

Furthermore, this composition definition respects componentwise inclusion and thus defines the poset map \eqref{poset_comp}. The realization of this poset map induces a map
\begin{equation}\label{mid_omega_comp}
\begin{tikzcd}  L_g\times L_f\ \arrow[r] &  L_{g\circ f}. \end{tikzcd}
\end{equation}
This defines a composition on the morphism spaces of $\tilde \om_0$.

\begin{example}\label{ex:omega-w-gf} 
    Let $f\colon R\to S$ and $g\colon S\to T$ be the morphisms in Figure~\ref{fig: composition of tree morphisms}.
    Then $R$ is a corolla $C_w=C_9$, and $f(C_w)\subset S$ is the proper subtree of $S$ whose vertices are $v_1, v_2, v_3$.
    The images $g(C_{v_1}), g(C_{v_3}), g(C_{v_4}) \subset T$ are the corollas $C_{u_1}, C_{u_5}, C_{u_6}$ respectively and $g(C_{v_2})$ is the subtree with vertices $u_2, u_3, u_4$. 
    The only images of corollas that admit a non-trivial bracketing are $f(C_w)$ and $g(C_{v_2})$. If the bracketing of $f(C_w)$ consists of the bracket $B_1$ in Figure~\ref{fig:B_1} and the bracketing of $g(C_{v_2})$ consists of $B_2$ in Figure~\ref{fig:B_2}, then
    $$\tilde B_{f(w)}^g=\{B_2\}, \quad \tilde B^{g\circ f}_{w}=\{g(C_{v_2})\}, \quad \tilde B^{f}_{w}= \{g(B_1)\}.$$ 
    The bracketing $\tilde B_w\in \B((g\circ f)(C_w))$ is illustrated in Figure~\ref{fig:B_w}. 
    
    \begin{figure}[h!t]
        \centering\def\svgwidth{0.67\columnwidth}
        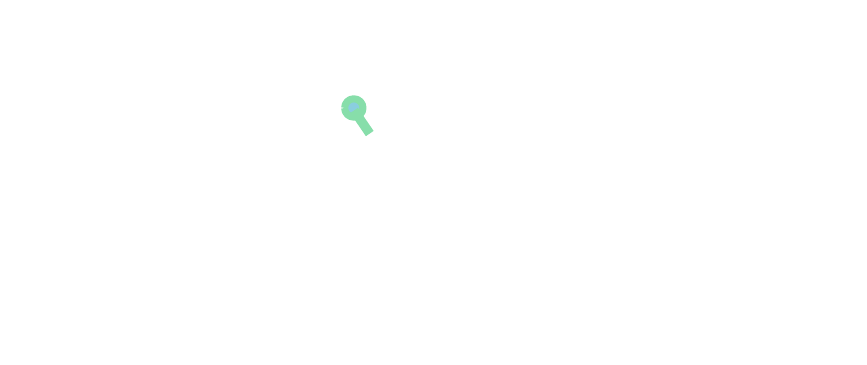
    \caption{Morphisms $f$ and $g$ in $\om$.}\label{fig: composition of tree morphisms}
    \end{figure}

    \begin{figure}[h!t]
    \centering
        \subfigure[Bracketing $B_1$ of $f(C_w)$.]{\label{fig:B_1}
        \centering\def\svgwidth{0.25\columnwidth}
\begingroup%
  \makeatletter%
  \providecommand\color[2][]{%
    \errmessage{(Inkscape) Color is used for the text in Inkscape, but the package 'color.sty' is not loaded}%
    \renewcommand\color[2][]{}%
  }%
  \providecommand\transparent[1]{%
    \errmessage{(Inkscape) Transparency is used (non-zero) for the text in Inkscape, but the package 'transparent.sty' is not loaded}%
    \renewcommand\transparent[1]{}%
  }%
  \providecommand\rotatebox[2]{#2}%
  \newcommand*\fsize{\dimexpr\f@size pt\relax}%
  \newcommand*\lineheight[1]{\fontsize{\fsize}{#1\fsize}\selectfont}%
  \ifx\svgwidth\undefined%
    \setlength{\unitlength}{82.1927948bp}%
    \ifx\svgscale\undefined%
      \relax%
    \else%
      \setlength{\unitlength}{\unitlength * \real{\svgscale}}%
    \fi%
  \else%
    \setlength{\unitlength}{\svgwidth}%
  \fi%
  \global\let\svgwidth\undefined%
  \global\let\svgscale\undefined%
  \makeatother%
  \begin{picture}(1,1.11163305)%
    \lineheight{1}%
    \setlength\tabcolsep{0pt}%
    \put(0,0){\includegraphics[width=\unitlength,page=1]{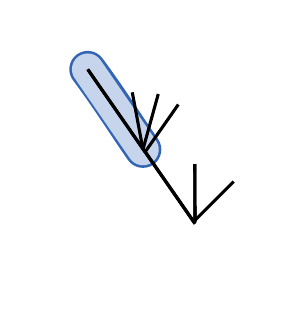}}%
    \put(0.20130793,0.82502257){\color[rgb]{0,0,0}\makebox(0,0)[t]{\lineheight{1.25}\smash{\begin{tabular}[t]{c}$v_1$\end{tabular}}}}%
    \put(0.38997617,0.54549555){\color[rgb]{0,0,0}\makebox(0,0)[t]{\lineheight{1.25}\smash{\begin{tabular}[t]{c}$v_2$\end{tabular}}}}%
    \put(0.59037383,0.30289634){\color[rgb]{0,0,0}\makebox(0,0)[t]{\lineheight{1.25}\smash{\begin{tabular}[t]{c}$v_3$\end{tabular}}}}%
    \put(0,0){\includegraphics[width=\unitlength,page=2]{Morphism-composition-bracketing1.pdf}}%
  \end{picture}%
\endgroup%
}
        \qquad
        \subfigure[Bracketing $B_2$ of $g(C_{v_2})$.]{\label{fig:B_2}
        \centering\def\svgwidth{0.25\columnwidth}
\begingroup%
  \makeatletter%
  \providecommand\color[2][]{%
    \errmessage{(Inkscape) Color is used for the text in Inkscape, but the package 'color.sty' is not loaded}%
    \renewcommand\color[2][]{}%
  }%
  \providecommand\transparent[1]{%
    \errmessage{(Inkscape) Transparency is used (non-zero) for the text in Inkscape, but the package 'transparent.sty' is not loaded}%
    \renewcommand\transparent[1]{}%
  }%
  \providecommand\rotatebox[2]{#2}%
  \newcommand*\fsize{\dimexpr\f@size pt\relax}%
  \newcommand*\lineheight[1]{\fontsize{\fsize}{#1\fsize}\selectfont}%
  \ifx\svgwidth\undefined%
    \setlength{\unitlength}{82.1927948bp}%
    \ifx\svgscale\undefined%
      \relax%
    \else%
      \setlength{\unitlength}{\unitlength * \real{\svgscale}}%
    \fi%
  \else%
    \setlength{\unitlength}{\svgwidth}%
  \fi%
  \global\let\svgwidth\undefined%
  \global\let\svgscale\undefined%
  \makeatother%
  \begin{picture}(1,1.11163305)%
    \lineheight{1}%
    \setlength\tabcolsep{0pt}%
    \put(0,0){\includegraphics[width=\unitlength,page=1]{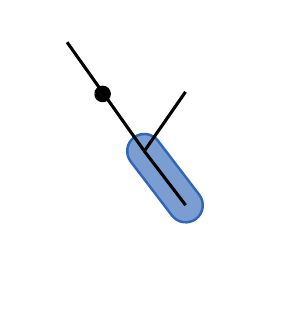}}%
    \put(0.28556684,0.69741285){\color[rgb]{0,0,0}\makebox(0,0)[t]{\lineheight{1.25}\smash{\begin{tabular}[t]{c}$u_2$\end{tabular}}}}%
    \put(0.42129612,0.48747014){\color[rgb]{0,0,0}\makebox(0,0)[t]{\lineheight{1.25}\smash{\begin{tabular}[t]{c}$u_3$\end{tabular}}}}%
    \put(0.57658079,0.29636836){\color[rgb]{0,0,0}\makebox(0,0)[t]{\lineheight{1.25}\smash{\begin{tabular}[t]{c}$u_4$\end{tabular}}}}%
    \put(0,0){\includegraphics[width=\unitlength,page=2]{Morphism-composition-bracketing2.pdf}}%
  \end{picture}%
\endgroup%
}
        \qquad 
        \subfigure[Bracketing $\Tilde{B}_w$ of $g\circ f(C_w)$.]{\label{fig:B_w}
        \centering\def\svgwidth{0.29\columnwidth}
\begingroup%
  \makeatletter%
  \providecommand\color[2][]{%
    \errmessage{(Inkscape) Color is used for the text in Inkscape, but the package 'color.sty' is not loaded}%
    \renewcommand\color[2][]{}%
  }%
  \providecommand\transparent[1]{%
    \errmessage{(Inkscape) Transparency is used (non-zero) for the text in Inkscape, but the package 'transparent.sty' is not loaded}%
    \renewcommand\transparent[1]{}%
  }%
  \providecommand\rotatebox[2]{#2}%
  \newcommand*\fsize{\dimexpr\f@size pt\relax}%
  \newcommand*\lineheight[1]{\fontsize{\fsize}{#1\fsize}\selectfont}%
  \ifx\svgwidth\undefined%
    \setlength{\unitlength}{91.13472748bp}%
    \ifx\svgscale\undefined%
      \relax%
    \else%
      \setlength{\unitlength}{\unitlength * \real{\svgscale}}%
    \fi%
  \else%
    \setlength{\unitlength}{\svgwidth}%
  \fi%
  \global\let\svgwidth\undefined%
  \global\let\svgscale\undefined%
  \makeatother%
  \begin{picture}(1,1.00305608)%
    \lineheight{1}%
    \setlength\tabcolsep{0pt}%
    \put(0,0){\includegraphics[width=\unitlength,page=1]{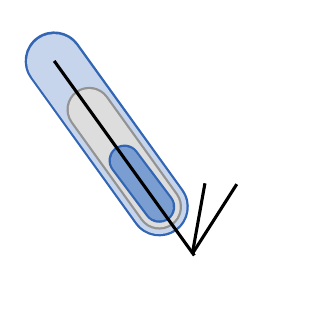}}%
    \put(0.0648355,0.73189184){\color[rgb]{0,0,0}\makebox(0,0)[t]{\lineheight{1.25}\smash{\begin{tabular}[t]{c}$u_1$\end{tabular}}}}%
    \put(0.16794223,0.57548466){\color[rgb]{0,0,0}\makebox(0,0)[t]{\lineheight{1.25}\smash{\begin{tabular}[t]{c}$u_2$\end{tabular}}}}%
    \put(0.28293013,0.41381614){\color[rgb]{0,0,0}\makebox(0,0)[t]{\lineheight{1.25}\smash{\begin{tabular}[t]{c}$u_3$\end{tabular}}}}%
    \put(0.4102646,0.25275653){\color[rgb]{0,0,0}\makebox(0,0)[t]{\lineheight{1.25}\smash{\begin{tabular}[t]{c}$u_4$\end{tabular}}}}%
    \put(0.67891411,0.15199883){\color[rgb]{0,0,0}\makebox(0,0)[t]{\lineheight{1.25}\smash{\begin{tabular}[t]{c}$u_5$\end{tabular}}}}%
    \put(0,0){\includegraphics[width=\unitlength,page=2]{Morphism-composition-bracketingw.pdf}}%
  \end{picture}%
\endgroup%
}
        \vspace{-1em}
        \caption{Example of a bracketing induced by $f$ and $g$ in Figure~\ref{fig: composition of tree morphisms}.}\label{fig: bracketing of composition of maps}
        \end{figure}
    
By Example~\ref{example: pentagon},  if $T_n$ is a tree with $n$ vertices such that no vertex is connected to more than two inner edges, then $|\B(T_n)|$ is the $n$th associahedron. The $3$rd associahedron is an interval. Thus,
    \begin{align*}
        &L_f=|\mathcal{B}(f(C_w))|=\left|\B\left(\bluetreetwo\right)\right|\cong [0,1] \\
        &L_g=\left|\B\left(g(C_{v_1})\right)\right|\times \left|\B\left(\bluetreetwo\right)\right|\times \left|\B\left(g(C_{v_3})\right)\right| \cong [0,1].
    \end{align*}
    Again by Example~\ref{example: pentagon} and since $(g\circ f) (C_w)$ is a tree on five vertices, $L_{g\circ f}=|\B( (g\circ f) (C_w))|$ is the $5$th associahedron, which is a three dimensional polytope called an enneahedron.  
\end{example}

\begin{definition}\label{def: Omega W}
The category $\widetilde\Omega_0$ has the same objects as $\Omega$. 
Morphism spaces in $\widetilde\om_0$ are $$\Hom_{\widetilde\om_0}(S,T)=\coprod\limits_{g\in\Hom_{\om}(S,T)} L_g=\coprod\limits_{g\in\Hom_{\om}(S,T)}\prod\limits_{v\in V(S)}|\B(g(C_v))| $$
with composition (\ref{mid_omega_comp}) as described above. 
\end{definition}

\begin{example} 
    Suppose $T_{n}$ is a planar tree with $(n+1)$ leaves and $n$ vertices, each of which is connected to at most two inner edges. Let the inner edges of $T_n$ be named $e_1, \ldots, e_{n-1}$.
    Morphisms $g\in\Hom_{\Omega}(C_{n+1},T_n)$ are compositions of inner face maps $\partial_{e_1}, \ldots, \partial_{e_{n-1}}$ but since the order of the composition does not affect the total composition, there is only one such morphism $g$. 
    Hence  $\Hom_{\widetilde\om_0}(C_{n+1},T_n)=L_g=|\mathcal{B}(T_n)|$.
    Thus $\Hom_{\widetilde\om_0}(C_{n+1},T_n)$ is the $n$th associahedron by Example~\ref{example: pentagon}; the centre point of the polytope is defined by the empty bracket, which is the initial object in the poset $\mathcal{B}(T_n)$. 
\end{example}

Lemma~\ref{bracketing-space-realization} tells us that each bracketing space $|\B(g(C_v))|$ is contractible, which implies that each $L_g$ is contractible. 
Let $p:\widetilde\Omega_0\rightarrow\Omega$ be the functor that is the identity on objects and projects  each morphism space $L_g$ to $g$. 
By considering $\om$ as a discrete topological category, we  have the following proposition.
\begin{prop}\label{prop: morphisms of hat-Omega_0 are contractible}
    The functor $p:\widetilde\Omega_0\rightarrow\Omega$ induces a homotopy equivalence on morphism spaces. \qed
\end{prop}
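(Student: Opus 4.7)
The plan is straightforward: since $\Omega$ is treated as a discrete topological category, the map $\Hom_{\widetilde\om_0}(S,T) \to \Hom_\om(S,T)$ on each morphism space is a map to a discrete set. A map from a topological space to a discrete set is a homotopy equivalence precisely when each fiber (equivalently, each path-component preimage of a point) is contractible. So the task reduces to verifying that, for each fixed morphism $g \in \Hom_\om(S,T)$, the space
\[ L_g = \prod_{v \in V(S)} |\mathcal{B}(g(C_v))| \]
is contractible.

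This is an immediate consequence of Lemma~\ref{bracketing-space-realization}, which states that $|\mathcal{B}(R)|$ is contractible for every tree $R$ (because the poset $\B(R)$ has the empty bracketing as a minimal element, so its nerve is contractible via the straight-line homotopy to the cone point). Applying this to each $g(C_v)$ and using that a finite product of contractible spaces is contractible, $L_g$ is contractible.

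Since $\Hom_{\widetilde\om_0}(S,T) = \coprod_{g} L_g$ is a disjoint union of contractible spaces indexed by the discrete set $\Hom_\om(S,T)$, and $p$ collapses each $L_g$ to the point $g$, the induced map on morphism spaces is a homotopy equivalence. There is no real obstacle here; the proof is essentially a one-line observation once Lemma~\ref{bracketing-space-realization} is invoked. The only thing worth spelling out carefully is the indexing of components, to make clear that the coproduct decomposition of $\Hom_{\widetilde\om_0}(S,T)$ in Definition~\ref{def: Omega W} aligns exactly with the fibers of $p$ over the points of the discrete set $\Hom_\om(S,T)$.
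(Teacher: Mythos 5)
Your proof is correct and is essentially the paper's own argument: the paper likewise observes that Lemma~\ref{bracketing-space-realization} makes each factor $|\mathcal{B}(g(C_v))|$, hence each $L_g$, contractible, and that $p$ collapses each component $L_g$ of the coproduct $\Hom_{\widetilde\om_0}(S,T)=\coprod_g L_g$ onto the corresponding point $g$ of the discrete set $\Hom_\om(S,T)$. Nothing further is needed.
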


This proposition will allow us to associate an actual dendroidal space to any homotopy dendroidal space in Section~\ref{WOdendroidal}.

\subsection{Homotopy dendroidal spaces}\label{sec: homotopy dendroidal spaces}

In Section~\ref{sec:omega}, we defined a Segal condition for dendroidal spaces $X:\om^{op}\to \mathcal{S}$ using the Segal map 
\begin{equation*}
\begin{tikzcd} \chi\colon X(T)\arrow[r] & \lim_{{\rm Sk}_1(T)^{op}}X.\end{tikzcd}
\end{equation*}
We recall that the category ${\rm Sk}_1(T)$ has the vertices and edges of $T$ as objects, with morphisms given by edge inclusions $\iota_e: \eta\to C_v$ into the corollas of adjacent vertices. The Segal map $\chi$ is the unique map to the limit induced by the edge and corolla inclusions
$$\iota_e: \eta \to T \ \ \ \textrm{and}\ \ \  \iota_v: C_{|v|}\to T.$$ 
Note that the spaces $L_{\iota_e}$ and $L_{\iota_v}$ in $\widetilde\om_0^{op}$ which lie above the morphisms $\iota_e$ and $\iota_v$ are always just a single point, so the Segal map exists unchanged for functors $X:\widetilde\om_0^{op}\to \mathcal{S}$. This allows us to make the following definition:

\begin{definition}\label{def:homotopy dendroidal space} 
A \emph{homotopy dendroidal space} $X$ is a diagram $X:\widetilde\om^{op}_0\rightarrow \mathcal{S}$. A homotopy dendroidal space is \emph{reduced} if $X(\eta)\simeq*$ and \emph{strictly reduced} if $X(\eta)=*$. A homotopy dendroidal space satisfies the \emph{strict} Segal condition if the Segal map is an \emph{isomorphism} for each $\eta\neq T\in\om$ and a homotopy dendroidal space satisfies a \emph{weak} Segal condition if the Segal map is a homotopy equivalence for each $\eta\neq T\in\om$. 
\end{definition}

Recall from Section~\ref{operads and dendroidal spaces} that one-colored operads are identified with strictly reduced dendroidal Segal spaces via the dendroidal nerve \[N^d:\OO\textrm{-Alg}\rightarrow\mathcal{S}^{\om^{op}}.\] The following theorem is a version of this nerve theorem for homotopy dendroidal spaces.  We construct a functor $$\Phi: B\mathcal{O}\mathrm{-Alg}_{\mathcal{S}} \longrightarrow \mathcal{S}^{\widetilde\Omega_0^{op}}$$ and show that a homotopy dendroidal space $X\in\mathcal{S}^{\widetilde\om_0^{op}}$ with $X=*$ is strictly Segal if, and only if, $X\cong \Phi(\mathcal{P})$ for some $B\OO$-algebra $\mathcal{P}$.  

\medskip

Write $(\mathcal{S}^{\widetilde\om_0^{op}})_{strict}$ for the full subcategory of $\widetilde\om_0$-diagrams whose objects are strictly reduced homotopy dendroidal spaces satisfying the strict Segal condition. Then we have the following result:

\begin{theorem}\label{thm:W_0O-Omega_0}
There exists an isomorphism of categories \[\begin{tikzcd}\Phi: B\mathcal{O}\mathrm{-Alg}_{\mathcal{S}} \arrow[r, "\cong"]& (\mathcal{S}^{\widetilde\Omega_0^{op}})_{strict}. \end{tikzcd}\] 
\end{theorem}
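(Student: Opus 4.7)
The plan is to mimic the classical equivalence between operads and strict Segal dendroidal spaces, replacing $\OO$ by $B\OO$ and $\om$ by $\widetilde\om_0$. Given a $B\OO$-algebra $\pp$, I would define $\Phi(\pp)$ to be the strictly reduced homotopy dendroidal space whose value at the trivial tree is $\ast$, whose value at a corolla $C_n$ is $\pp(n)$, and whose value at a general tree $T$ is $\prod_{v\in V(T)}\pp(|v|)$. The strict Segal condition is then satisfied by construction, since under strict reducedness the Segal limit reduces to this product.

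To define $\Phi(\pp)$ on morphisms, I would use the $B\OO$-algebra structure on $\pp$. A point in $\Hom_{\widetilde\om_0}(S,T)$ consists of a morphism $g\colon S\to T$ in $\om$ together with a tuple of weighted bracketings $(B^g_v, t_v)_{v\in V(S)}$, where each $B^g_v$ is a bracketing of the subtree $g(C_v)\subset T$. By the strict Segal identifications, specifying a map $\Phi(\pp)(T)\to \Phi(\pp)(S)$ is the same as specifying, for each $v\in V(S)$, a map $\prod_{u\in V(g(C_v))}\pp(|u|)\to\pp(|v|)$. After choosing any labelling, the data $(g(C_v),B^g_v,t_v)$ represents a point in $B\OO(|v|;|u_1|,\dots,|u_k|)$, so the $B\OO$-action supplies this map; independence of the labelling choice follows from the equivariance axiom of a $B\OO$-algebra, using that $B\OO(n;n)\cong\OO(n;n)\cong\Sigma_n$ acts strictly.

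For the inverse $\Psi$, given a strictly reduced, strict Segal $X\colon\widetilde\om_0^{op}\to\mathcal{S}$, I would set $\pp(n)=X(C_n)$. Every element $(T,\sigma,\tau,B,t)\in B\OO(n;m_1,\dots,m_k)$ determines a point of $\Hom_{\widetilde\om_0}(C_n,T)$: the labelled tree gives a morphism $C_n\to T$ in $\om$ (using $\tau$ to map the leaves and $\sigma$ to record the vertices), while $(B,t)$ provides the bracketing decoration at the unique vertex of $C_n$. The induced map $X(T)\to X(C_n)$, composed with the strict Segal isomorphism $X(T)\cong\prod_v\pp(m_v)$, is the required $B\OO$-action map. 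That $\Phi\circ\Psi=\id$ and $\Psi\circ\Phi=\id$ are then immediate from the constructions once functoriality is established.

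The main obstacle is verifying functoriality on both sides, i.e.\ checking that the composition rule in $\widetilde\om_0$ matches the operadic composition in $B\OO$. Concretely, one must compare the bracketing $\tilde B_w=\tilde B^g_{f(w)}\cup\tilde B^{g\circ f}_w\cup\tilde B^f_w$ produced by the $\widetilde\om_0$-composition with the bracketing $B''=\tilde B\cup B'\cup\{T'\}$ of equation~\eqref{eq: Bracket comp} produced by the $B\OO$-composition. The structural match is forced by design: substituting $g$ into $f$ at a vertex $v$ inserts the subtree $g(C_v)$ exactly as $B\OO$-substitution inserts $T'$ at $\sigma(i)$, and in both constructions a new bracket is placed around the inserted subtree while existing brackets are transported through the substitution. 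Two points of genuine bookkeeping need attention: first, the parallel treatment of brackets that become non-large (the indexing set $J'\subset J$ in the $B\OO$-case versus the discarding rule for a degenerate $g(C_v)$ in $\widetilde\om_0$), and second, the compatibility between the planar labelling data in $\OO$ and the tree isomorphisms hidden inside $\om$-morphisms. Once this single-composite compatibility is verified, associativity, unitality and equivariance on both sides follow from the corresponding properties of $\OO$, and the theorem is established.
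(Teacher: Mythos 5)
Your proposal is correct and follows essentially the same route as the paper's own proof: the same definition of $\Phi$ on objects and (vertex-by-vertex) on morphisms, the same use of equivariance to handle the labelling choices, the same identification of the key functoriality check with the match between $\widetilde\om_0$-composition and $B\OO$-composition, and the same inverse via evaluation at corollas and corolla isomorphisms. The bookkeeping points you flag (discarding non-large brackets and reconciling the planar labelling of $g(C_v)$ in $T$ with the planar ordering of $in(v)$ in $S$) are exactly the ones the paper addresses.
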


\begin{proof}
Given a $B\OO$--algebra $\pp=\{\pp(n)\}_{n\ge 0}$ with structure maps
$$\alpha_\pp:B\OO(n;m_1,\dots,m_k)\times \pp(m_1)\times\dots\times \pp(m_k)\longrightarrow \pp(n)$$ we will define
$$\Phi(\pp)=\Phi(\pp,\alpha_\pp)\colon \widetilde\om_0^{op} \longrightarrow \mathcal{S}$$ as follows. We set $\Phi(\pp)(\eta)=*$. 
On objects $T\not=\eta$ of $\widetilde\om_0$, we set
$$\Phi(\pp)(T)=\prod_{w\in V(T)}\pp(|w|).$$
Given a morphism $g:S\to T$ in $\om$, we need to define maps
$$\Phi(\pp)(g): L_g\times \prod_{w\in V(T)}\pp(|w|) \longrightarrow \prod_{v\in V(S)}\pp(|v|).$$
We proceed one vertex $v$ at a time. As $L_g=\prod_{v\in V(S)}|\B(g(C_v))|$, at each $v\in V(S)$ we have projection maps
$$\pi_v: L_g\times \prod_{w\in V(T)}\pp(|w|)\longrightarrow |\B(g(C_v))|\times \prod_{w\in V(g(C_v))}\pp(|w|).  $$
An application of the structure map $\alpha_\pp$ defines a map
\begin{equation}\label{equ:la}
   \alpha_v: |\B(g(C_v))|\times \prod_{w\in V(g(C_v))}\pp(|w|) \longrightarrow \pp(|v|).\tag{$*$}
    \end{equation}
Indeed, an element of $|\B(g(C_v))|$ is a weighted bracketing $(B,t)$ of the subtree $g(C_v)\subset T$. Because $T$ is a planar tree, $g(C_v)$ inherits a planar structure. We consider $g(C_v)$ as an element of $\OO$ by picking an ordering $\sigma$ of its vertices $\{w_1,\dots,w_k\}$, and labeling its leaves via the map $\tau$ ordering them according to its planar structure. This way $((g(C_v),\sigma,\tau),B,t)$ is an element of $B\OO(|v|;|w_1|,\dots,|w_k|)$.  
To define the map \eqref{equ:la}, we first order the factors $\pp(|w|)$ for $w\in V(g(C_v))$, in accordance with our chosen $\sigma$, and then apply $\alpha_\pp$ noting that our choice of ordering does not affect the result by the equivariance of $\alpha_\pp$. Finally we act on the resulting element of $\pp(|v|)$ by the permutation induced by $g$ that identifies the inputs of $v$ with the leaves of $g(C_v)$, comparing the labeling $\tau$ from the planar structure of $T$ to the planar ordering of $in(v)$ (which comes from the planar structure of $S$). 
We now set $$\Phi(\pp)(g):=(\alpha_v\circ \pi_v)_{v\in V(S)}.$$

The fact that $\Phi(\pp)$ commutes with composition follows from the fact that composition in $\widetilde\om_0$ is defined exactly as the operadic composition of $B\OO$ by taking the union of the brackets from the first morphism which remain large after applying the second morphisms, the brackets from the second morphism, and  new ``middle brackets'', the images of the middle corollas, if they are large. It follows then that  $\Phi(\pp):\widetilde\om_0\to \mathcal{S}$ is a functor.
Since $\Phi(\pp)(\eta)=*$, the Segal map is the map 
$$\Phi(\pp)(T)\longrightarrow \prod\limits_{v\in V(T)}\Phi(\pp)(C_v)$$
induced by the inclusions of the corollas. It is an isomorphism by definition of $\Phi(\pp)$. 

\medskip

The data required in the definition of the homotopy dendroidal space $\Phi(\pp)$ is the underlying symmetric sequence $\pp=\{\pp(m)\}$, the $B\OO$-algebra structure maps $\alpha_{\pp}$ and the projection maps $\pi_v$, all of which are natural under maps of $B\OO$-algebras. Thus, the assignment $\pp\mapsto \Phi(\pp)$ defines a functor \[\Phi:B\mathcal{O}\mathrm{-Alg}_{\mathcal{S}} \longrightarrow (\mathcal{S}^{\widetilde\Omega_0^{op}})_{strict}.\]

\medskip

It remains to show that the functor $\Phi$ is an isomorphism of categories. Given two $B\OO$-algebras $\pp$ and $\mathcal{Q}$  with $\Phi(\pp)=\Phi(\mathcal{Q})$, the underlying symmetric sequences $\{\pp(n)\}_{n\ge 0}$ and $\{\mathcal{Q}(n)\}_{n\ge 0}$ are necessarily equal, being the value at the corollas $C_n$ and the corolla isomorphisms $\Hom_{\widetilde\om_0}(C_n,C_n)\cong\Hom_{\om}(C_n,C_n)\cong \Sigma_n$.  
Moreover, the structure maps $\alpha_\pp$ and $\alpha_{\mathcal{Q}}$ likewise must agree as they agree with the evaluation of $\Phi(\pp)=\Phi(\mathcal{Q})$ at corresponding morphisms in $\widetilde\Omega_0^{op}$.  It follows that $\Phi$ is injective. 

\medskip

On the other hand, given any $X\in (\mathcal{S}^{\widetilde\Omega_0^{op}})_{strict}$, we can construct a $B\OO$-algebra $\pp_X$ by setting $\pp_X(n)=X(C_n)$ with a symmetric group action induced by the image under $X$ of the isomorphisms of $C_n$ in $\widetilde\om_0$.  
The $B\OO$-algebra structure maps of $\pp_X$ are defined using the above identification of the spaces $B\OO(n;m_1,\dots,m_k)$ with morphism spaces in $\widetilde\om_0$. The fact that $X$ is a functor will then give that $\pp_X$ is a $B\OO$--algebra. Thus the functor $\Phi$ is surjective. 
\end{proof}

\subsection{Rectifying homotopy dendroidal spaces}\label{WOdendroidal}

We have just seen that $B\OO$-algebras correspond to homotopy dendroidal spaces satisfying the strict Segal condition. In this section we will show how to produce, from a $B\OO$-algebra, an actual dendroidal space satisfying the weak Segal condition. To do this we will use some elementary facts about the homotopy theory of diagram categories in the form of Quillen model categories. 

\medskip 

A commutative diagram in a topological category $\mathcal{S}$ is a functor from a discrete category $\mathcal{K}$ to $\mathcal{S}$. A homotopy commutative diagram can be similarly described as a functor from a topological category $\widetilde{\mathcal{K}}$ to $\mathcal{S}$, with the homotopies encoded as paths in the spaces of morphisms. In this language, a homotopy commutative diagram $X:\widetilde{\mathcal{K}}\to \mathcal{S}$ can be {\em rectified}, or {\em strictified}, to a functor $X':\mathcal{K}\to \mathcal{S}$ precisely when there is an equivalence $p:\widetilde{\mathcal{K}}\to \mathcal{K}$. We briefly recall this rectification of diagrams, which was used by Segal in \cite{Segal74}, and treated in great generality by Dwyer and Kan \cite{DwyerKan}; see also \cite[Sec 2]{Wahl04} for a detailed account of what we will use here. Our examples will be $\mathcal{K}=\om$ with $\widetilde{\mathcal{K}}=\widetilde\om_0$.

Let $p:\widetilde{\mathcal{K}}\to \mathcal{K}$ be a functor between categories enriched over topological spaces. There is an induced functor
$$p^*:\mathcal{S}^{\mathcal{K}} \longrightarrow \mathcal{S}^{\widetilde{\mathcal{K}}}$$
defined by precomposition with $p$. The homotopy left Kan extension defines also a functor 
$$p_*:\mathcal{S}^{\widetilde{\mathcal{K}}} \longrightarrow \mathcal{S}^{\mathcal{K}}$$
that can be explicitly given as follows: 
given a diagram $Y\in\mathcal{S}^{\widetilde{\mathcal{K}}}$, its evaluation at an object $d$ of $\mathcal{K}$ is the realization of a simplicial space with space of $k$--simplices
 $$(p_*Y(d))_k=\coprod\limits_{c_0,\dots,c_k\in ob(\widetilde{\mathcal{K}})} Y(c_0)\times  \Hom_{\widetilde{\mathcal{K}}}(c_0,c_1)\times\dots \times \Hom_{\widetilde{\mathcal{K}}}(c_{k-1},c_k)\times \Hom_{{\mathcal{K}}}(c_k,d). $$

\begin{lemma}\cite[Proposition 2.1]{Wahl04}\label{the-natural-transformations} Let $p\colon\widetilde{\mathcal{K}}\to \mathcal{K}$ be a functor inducing a homotopy equivalence of morphism spaces, and let $Y\colon\widetilde{\mathcal{K}}\to \mathcal{S}$ be a diagram, with $p_*Y\colon \mathcal{K}\to \mathcal{S}$ its rectification as defined above.  
Then there exists a zig-zag of natural transformations $p^*p_*Y\leftarrow\ \overline{p^*p_*Y}  \rightarrow Y$, which induces a homotopy equivalence on objects: $p^*p_*Y(d)\simeq \overline{p^*p_*Y}(d) \simeq Y(d)$. 
\end{lemma}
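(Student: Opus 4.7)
The plan is to construct $\overline{p^*p_*Y}$ as a two-sided bar-type simplicial replacement of $Y$ over $\widetilde{\mathcal{K}}$, and then exhibit both natural transformations as levelwise homotopy equivalences on the underlying simplicial spaces. Concretely, for $c \in \widetilde{\mathcal{K}}$ I would let $\overline{p^*p_*Y}(c)$ be the geometric realization of the simplicial space whose space of $k$-simplices is
$$\coprod_{c_0,\ldots,c_k} Y(c_0)\times \Hom_{\widetilde{\mathcal{K}}}(c_0,c_1)\times\cdots\times \Hom_{\widetilde{\mathcal{K}}}(c_{k-1},c_k)\times \Hom_{\widetilde{\mathcal{K}}}(c_k,c),$$
with face maps given by evaluating $Y$ on the leftmost morphism, composing adjacent morphisms in $\widetilde{\mathcal{K}}$, and acting by the last morphism on $c$; degeneracies insert identities. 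This is visibly functorial in $c \in \widetilde{\mathcal{K}}$ via post-composition in the last factor, so defines a diagram $\overline{p^*p_*Y}\colon \widetilde{\mathcal{K}}\to\mathcal{S}$. Note that in degree $k$ it has the same summands as the formula defining $p_*Y(p(c))$, except that the final $\Hom$-space lies in $\widetilde{\mathcal{K}}$ rather than $\mathcal{K}$.

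First natural transformation: the map $\overline{p^*p_*Y} \to p^*p_*Y$ is induced in each simplicial degree by applying $p$ only on the last factor, $\Hom_{\widetilde{\mathcal{K}}}(c_k,c)\to \Hom_{\mathcal{K}}(p(c_k),p(c))$, and then interpreting the result as an element of $p_*Y(p(c))$. This is compatible with faces and degeneracies by functoriality of $p$, hence simplicial. Because $p$ induces a homotopy equivalence on each morphism space by hypothesis, the map is a levelwise homotopy equivalence of simplicial spaces.

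Second natural transformation: the augmentation $\overline{p^*p_*Y}(c)\to Y(c)$ sends a $k$-simplex to $Y(\phi_k\circ\cdots\circ\phi_1)(y)$ where $y \in Y(c_0)$ and the $\phi_i$ are the morphisms. This is a homotopy equivalence by the classical extra-degeneracy argument: the map $Y(c) \hookrightarrow \overline{p^*p_*Y}(c)$ that places $y \in Y(c)$ as the $0$-simplex $(y,\id_c)$ extends, via repeated insertion of $\id_c$ in the final slot, to an augmented simplicial contraction, providing a simplicial homotopy inverse to the augmentation.

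To conclude that each leg is a homotopy equivalence after realization, one checks that the underlying simplicial spaces are Reedy cofibrant (equivalently, the degeneracies are inclusions of summands built freely from the sequences of composable morphisms), which is immediate from the bar-construction form. Standard properties of geometric realization then promote both levelwise equivalences to equivalences of realizations, yielding the desired zig-zag $p^*p_*Y \leftarrow \overline{p^*p_*Y} \rightarrow Y$ that is an objectwise weak equivalence. The main technical point to verify carefully is the Reedy cofibrancy, which is why the construction is done via the explicit bar model rather than an abstract homotopy Kan extension; everything else is formal once the simplicial replacement is in place.
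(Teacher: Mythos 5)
Your proposal is correct and follows essentially the same route as the paper, which does not reprove this lemma but cites \cite[Proposition 2.1]{Wahl04} and describes $\overline{p^*p_*Y}$ precisely as the two-sided bar construction you write down; your two legs (applying $p$ to the final $\Hom$-factor, which is a levelwise equivalence by hypothesis, and the augmentation, handled by the extra-degeneracy contraction) are the standard argument from that reference. The only points to tidy are notational: the augmentation should also apply the final morphism $\psi\in\Hom_{\widetilde{\mathcal{K}}}(c_k,c)$, i.e.\ send a $k$-simplex to $Y(\psi\circ\phi_k\circ\cdots\circ\phi_1)(y)$, and the goodness/Reedy-cofibrancy check you flag is indeed the one technical hypothesis needed to pass levelwise equivalences through realization (harmless here since the morphism spaces are realizations of nerves of posets with identities as nondegenerate basepoints).
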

In the statement, $\overline{p^*p_*Y}$ is an explicit functor from $\widetilde{\mathcal{K}}$ to $\mathcal{S}$ associated to $Y$ given by a two-sided bar construction (details in the proof of \cite[Proposition 2.1]{Wahl04}). 

\medskip

Proposition~\ref{prop: morphisms of hat-Omega_0 are contractible} states that the functor $p:\widetilde\om_0\to \om$ induces a homotopy equivalence of morphism spaces. 
Below, we apply Lemma~\ref{the-natural-transformations} to describe Segal dendroidal spaces 
which arise as the rectification of a homotopy dendroidal Segal space $Y\in \mathcal{S}^{\widetilde\om_0^{op}}$.

\medskip

For any small category $\mathcal{K}$, the category of diagrams $\mathcal{K}^{op}\rightarrow\mathcal{S}$ admits a \emph{projective model structure} in which weak equivalences and fibrations are defined entrywise \cite[11.6.1]{MR1944041}. In particular, there is a projective model category structure on the category of reduced dendroidal spaces \cite[Proposition 3.10]{Bergner_Hackney_14}. Similarly, there is a projective model category structure on the category of reduced homotopy dendroidal spaces. Moreover, an application of \cite[Proposition A.3.3.7]{LurieHTT} implies that the homotopy left Kan extension $p_*$ is the left Quillen functor in a Quillen equivalence 
\[
\begin{tikzcd}
    \mathcal{S}^{\widetilde\om_0^{op}} \arrow[r, bend left=15, "p_*"] & \mathcal{S}^{\om^{op}}.\arrow[l, bend left=15, "p^*"]
\end{tikzcd}
\]

We note that a fibrant diagram in either $\mathcal{S}^{\widetilde\om_0^{op}}$ or $\mathcal{S}^{\om^{op}}$ is, in particular, entrywise fibrant. When $Y$ is a reduced homotopy dendroidal space and fibrant, then one could identify the limit defining the Segal map with the homotopy limit. In this case, the Segal map $\widetilde\chi$ can be written 
\[\begin{tikzcd}
Y(T)\arrow[r,"\widetilde\chi"] & \prod\limits_{v\in V(T)}Y(C_v).
\end{tikzcd}\]  

\begin{prop}\label{prop:homotopy Segal implies Segal}
Let $Y\in\mathcal{S}^{\widetilde\om_0^{op}}$ be a fibrant reduced homotopy dendroidal space. Then
the rectification $p_*Y\in\mathcal{S}^{\om^{op}}$ is a reduced dendroidal space, and the fibrant replacement $(p_*Y)_f$ satisfies the weak Segal condition if and only if $Y$ does.  
\end{prop}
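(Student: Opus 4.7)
The plan is to combine Lemma~\ref{the-natural-transformations} with projective fibrant replacement to transfer both reducedness and the Segal condition between $Y$ and $(p_*Y)_f$. Since $p\colon \widetilde\om_0 \to \om$ is the identity on objects, evaluation of the zig-zag $p^*p_*Y \leftarrow \overline{p^*p_*Y} \to Y$ at any tree $T$ gives $p_*Y(T) \xleftarrow{\sim} \overline{p^*p_*Y}(T) \xrightarrow{\sim} Y(T)$. In particular, taking $T = \eta$ yields $p_*Y(\eta) \simeq Y(\eta) \simeq *$, so $p_*Y$ is reduced.

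For the Segal condition, I would use naturality. For each $\eta \neq T \in \om$ and each corolla inclusion $\iota_v \colon C_v \to T$ (and edge inclusion $\iota_e\colon \eta\to T$), the fibre $L_{\iota_v}$ is a point, so $\iota_v$ lifts canonically from $\om$ to $\widetilde\om_0$. Hence the Segal map is natural along the composite zig-zag
\[
(p_*Y)_f \xleftarrow{\sim} p_*Y = p^*p_*Y \xleftarrow{\sim} \overline{p^*p_*Y} \xrightarrow{\sim} Y
\]
of entrywise weak equivalences, with the leftmost arrow being fibrant replacement. On evaluation at $T$ this produces a commutative ladder whose columns are the four Segal maps into $\prod_{v \in V(T)}(-)(C_v)$ and whose rows become weak equivalences after the finite product over corollas. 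Two-out-of-three applied column by column then shows that the Segal map of $(p_*Y)_f$ is a weak equivalence if and only if the Segal map of $Y$ is.

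The main subtlety is verifying that all four Segal maps in the ladder refer to the same combinatorial data, which is precisely what the canonical lifts of the corolla and edge inclusions from $\om$ to $\widetilde\om_0$ ensure. Once this naturality is pinned down, the argument reduces to a two-out-of-three diagram chase, with the only homotopical input being that finite products in $\mathcal{S}$ preserve pointwise weak equivalences.
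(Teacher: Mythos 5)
Your argument is correct and follows essentially the same route as the paper: reducedness via evaluating the zig-zag of Lemma~\ref{the-natural-transformations} at $\eta$, and the Segal condition via the observation that the corolla and edge inclusions lift canonically along $p$ (so the Segal maps for $X$ and $p^*X$ coincide), followed by a two-out-of-three chase through the ladder obtained from the zig-zag and fibrant replacement. The only cosmetic difference is that the paper fibrantly replaces the middle term $\overline{p^*p_*Y}$ before comparing, whereas you invoke directly that finite products in $\mathcal{S}$ preserve entrywise weak equivalences; both handle the non-fibrancy of $p_*Y$ adequately.
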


\begin{proof}
Note first that, because $p$ is the identity on objects, we have $p^*X(T)=X(T)$ for any $X:\om^{op}\to \mathcal{S}$ and any $T\in Obj(\om)\equiv Obj(\widetilde\om_0)$.  It follows  that $p_*Y(\eta)\simeq *$ if $Y(\eta)\simeq *$ as $p_*Y(\eta)=p^*p_* Y(\eta)\simeq Y(\eta)\simeq *$. 
\medskip

We are left to show that the Segal map a is weak equivalence for every $T\neq \eta$ for $p_*Y:\om^{op}\to \mathcal{S}$ if and only if it is the case for the original functor $Y:\widetilde\om^{op}_0\to \mathcal{S}$. Recall that the Segal maps for $Y$ and $p_*Y$ are the maps   
 \begin{align*}
 Y(T) & \xrightarrow{\widetilde\chi} \lim_{{\rm Sk}_1(T)^{op}}Y(C_v) \\
 p_*Y(T)  & \xrightarrow{\chi} \lim_{{\rm Sk}_1(T)^{op}}p_*Y(C_v) 
 \end{align*} 
 both of which are induced by corolla and edge inclusions in $\widetilde\om_0$ and $\om$, respectively. 
 Now, $p$ takes the map $\widetilde{\chi}$, and each map $\iota_e$ and $\iota_v$ in $\widetilde\om_0$ used to define the limit, to the corresponding map in $\om$ used to define $\chi$. 
 Using that $p$ is the identity on objects, 
 for any $X:\om^{op}\to \mathcal{S}$, we have 
 $$\xymatrix{ X(T)\ar@{=}[d]  \ar[r]^-{\chi} & \lim_{{\rm Sk}_1(T)^{op}}X(C_v) \ar@{=}[d] \\
  p^*X(T)  \ar[r]^-{\widetilde\chi} & \lim_{{\rm Sk}_1(T)^{op}}p^*X(C_v)
 }$$
 in which the two horizontal maps describe the exact same map in $\mathcal{S}$.

Since $p_*$ is the left adjoint in our Quillen pair, it may not the be the case that $p_*Y$ is fibrant.  However, we do know that $p^*(p_*Y)_f$ is fibrant, where $(p_*Y)_f$ denotes the fibrant replacement of $p_*Y$ in $\mathcal{S}^{\om^{op}}$. Similarly, we let $(\overline{p^*p_*Y})_f$ denote the fibrant replacement of the diagram $\overline{p^*p_*Y}$.  
Since limits commute with homotopy equivalences whenever our diagram is fibrant,
the natural equivalences of functors $p^*p_*Y \leftarrow \overline{p^*p_*Y} \rightarrow Y$ of Lemma \ref{the-natural-transformations} 
give us the vertical homotopy equivalences in the following commuting diagram in $\mathcal{S}$

\[\begin{tikzcd}[column sep=huge]{p^*(p_*Y)_f(T)} \arrow[r, "\widetilde\chi"]  & {\prod\limits_{v\in V(T)}p^*(p_*Y)_f(C_v)} \\{(\overline{p^*p_*Y})_f(T)} \arrow[r, "\widetilde\chi"] \arrow[u, "\simeq"]\arrow[d, "\simeq"'] & {\prod\limits_{v\in V(T)}(\overline{p^*p_*Y})_f(C_v)} \arrow[d, "\simeq"]\arrow[u, "\simeq"'] \\{Y(T)} \arrow[r, "\widetilde\chi"]  & {\prod\limits_{v\in V(T)}Y(C_v)}.  \end{tikzcd}\] 

Using the previous remark in the case $X=p_*Y$ identifies the top line of the diagram with the Segal map for $p_*Y$. It thus follows that $p_*Y$ satisfies the weak Segal condition (i.e. the top map is a weak equivalence) if, and only if, $Y$ satisfies the weak Segal condition (i.e. the bottom map is a weak equivalence).
\end{proof}

We are now ready to prove Theorem~\ref{ThBO} from the introduction.

\begin{proof}[Proof of Theorem~\ref{ThBO}]
Let $\pp$ be a $B\OO$-algebra. Applying the functor $\Phi$ of Theorem~\ref{thm:W_0O-Omega_0}, we obtain a homotopy dendroidal space $X:=\Phi(\pp)\in\mathcal{S}^{\widetilde\om_0^{op}}$, which we know, by the theorem, is a strictly reduced homotopy dendroidal space satisfying the strict Segal condition. If $X$ is not fibrant, we take a fibrant replacement $(X)_f$.

Set $Y:=(p_*X)_f= (p_*\Phi(\pp))_f\in \mathcal{S}^{\om^{op}}$ which, by Lemma~\ref{the-natural-transformations}, has the property that $Y(C_w)=(p_*\Phi(\pp))_f(C_w)\simeq \Phi(\pp)(C_w)=\pp(|w|)$ and the value of $Y$ on inner face maps identifies under these homotopy equivalences with the value of $X$ on inner face maps, and hence identifies with the $B\OO$--algebra composition. We can now apply Proposition~\ref{prop:homotopy Segal implies Segal} to $X$ to conclude that $Y$ is a reduced dendroidal space that satisfies the weak Segal condition. 
    
\end{proof}

\begin{remark}
Since the dendroidal category is a generalized Reedy category \cite[Example 1.6]{bmreedy}, there is also a Reedy model structure on the category of reduced dendroidal spaces. Proposition 3.3 of \cite{Bergner_Hackney_14} says that the identity functor induces a Quillen equivalence between the Reedy model structure and the projective model structures on reduced dendroidal spaces. 

We use the projective model structure here because, for our purposes, it is not necessary to show that the category $\widetilde\om_0$ is an enriched generalized Reedy category.  If one wished to do so, one would need to put an enriched generalized Reedy model structure on $\widetilde\om_0$ and then repeat the argument in Proposition~\ref{prop:homotopy Segal implies Segal} with the appropriate Reedy fibrant objects. 
\end{remark}
\section{Normalized Cacti as an infinity operad}\label{sec:cacti}

The first goal of this section is to define an operad $MS^+$ and show that, despite not being an operad itself,
normalized cacti and their composition can be described as elements and compositions inside $MS^+$.
In  Section~\ref{sec: Cact1 is BO}, we will use $MS^+$ to show that normalized cacti and the normalized composition extends to define
a $B\mathcal{O}$-algebra structure. 
Using the results of Sections~\ref{sec:BO} and \ref{sec:thickening-omega}, this implies that we have an explicit construction of an $\infty$-operad with underlying sequence the spaces $\cact^1(n)$.

\medskip

A cactus is a configuration of circles of various lengths attached to each other in a treelike fashion.
In the original definition by Voronov \cite[Section 2.7]{V05},
there is a global basepoint  associated to the ``outside circle'' of the cactus, as well as a basepoint for each circle (or \textit{lobe}).
A \emph{spineless cactus} is a variant introduced by Kaufmann \cite[Section 2.3]{K05},  where the basepoint of each lobe is its closest point to the global basepoint along the outside circle. See Figure~\ref{fig:Cact-example} for an example. 
\begin{figure}[ht]
    \centering\def\svgwidth{0.5\columnwidth}
    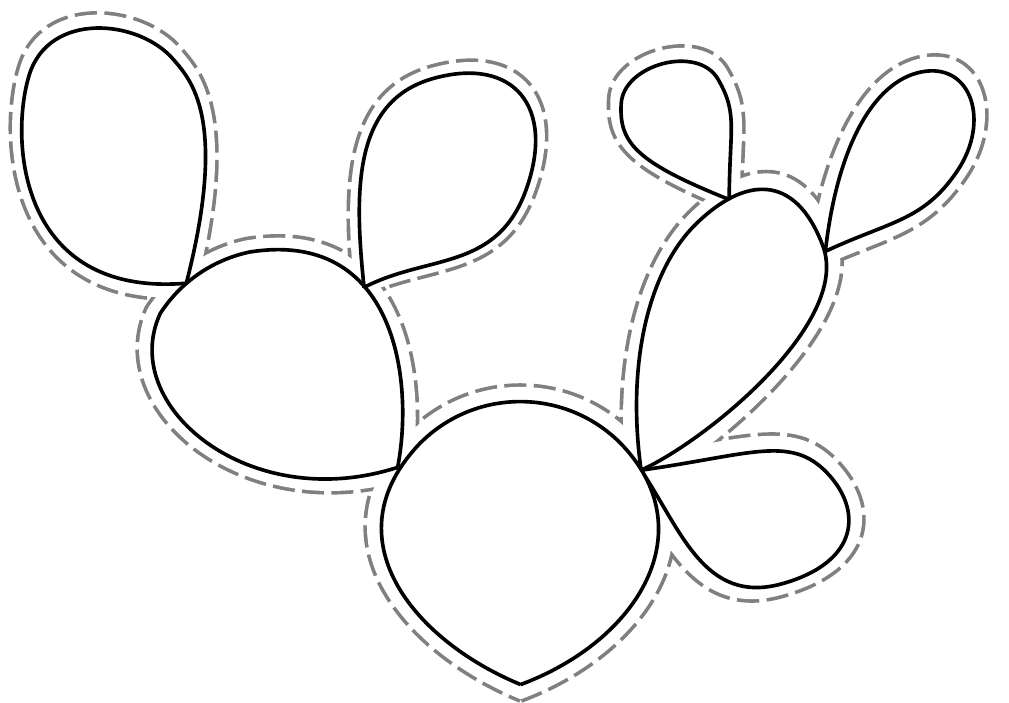
\caption{Cactus with $8$ lobes, its outside circle indicated by the dotted line. }\label{fig:Cact-example}
\end{figure}
The space of all spineless cacti with $k$ lobes is denoted $\cact(k)$. The symmetric group acts on this space by permuting the labels of the lobes.  The  symmetric sequence $\cact=\{\cact(k)\}_{k\geq0}$ is given a composition 
\begin{equation*}
			\circ_i \colon \cact(k) \times \cact(j) \to \cact(k+j-1)
		\end{equation*}
that is defined by inserting the second cactus into the $i$th lobe of the first cactus and aligning its global basepoint with the basepoint of the $i$th lobe. 
The insertion is done by rescaling the second cactus so that its total length is equal to the length of the $i$th lobe of the first cactus, then identifying the outside circle of the second cactus with the $i$th lobe of the first cactus. 
This composition makes $\cact$ into an operad, which is equivalent to the little $2$-discs operad \cite[Section 3.2.1]{K05}.
A rigorous definition of this composition requires close attention to subtleties and we refer to \cite[Section 2]{K05} for precise definitions. 

\begin{figure}[ht]
    \centering\def\svgwidth{0.6\columnwidth}
    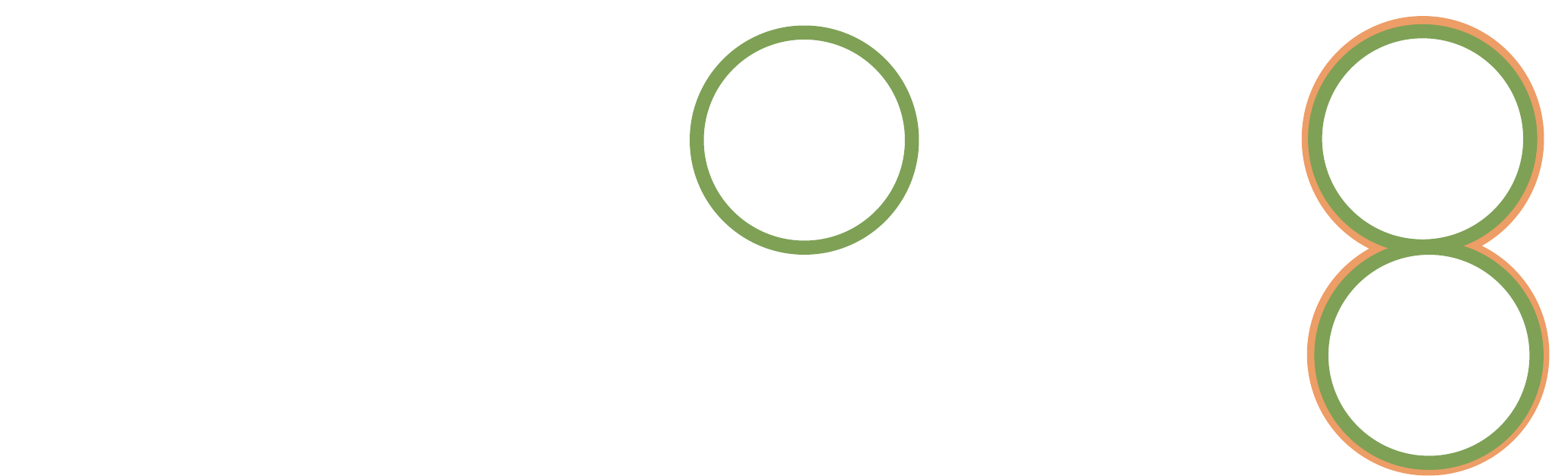
\caption{A composition of normalized cacti. }\label{fig:Cact-composition}
\end{figure}

The space of {\em normalized cacti}  $\cact^1(k)\subset \cact(k)$ is the subspace of spineless cacti whose lobes all have length equal to 1 (\cite[Definition 2.3.1]{K05}). 
They form a symmetric sequence $\cact^1=\{\cact^1(k)\}_{k\ge 0}$.
Composition of normalized cacti 
\begin{equation}\label{eq: cact^1 composition}
			\circ_i \colon \cact^1(k) \times \cact^1(j) \to \cact^1(k+j-1),
		\end{equation}
is defined by 
reparameterizing the $i$th lobe of a 
cactus $x\in\cact^1(k)$ to have length $j$, then identifying this lobe with the 
outer circle of the second cactus $y\in\cact^1(j)$ and aligning their basepoints.
In contrast to $\cact$, 
the $i$th lobe of the first cactus is scaled 
instead of scaling the second cactus to the length of the $i$th lobe. 
See Figure~\ref{fig:Cact-composition} for an example. 
This composition is not associative \cite[Remark 2.3.19]{K05}, as illustrated in Figure~\ref{fig:CactCompUnassoc}.
Thus $\cact^1$ is \emph{not} an operad.

\begin{figure}[ht]
    \centering\def\svgwidth{0.7\columnwidth}
    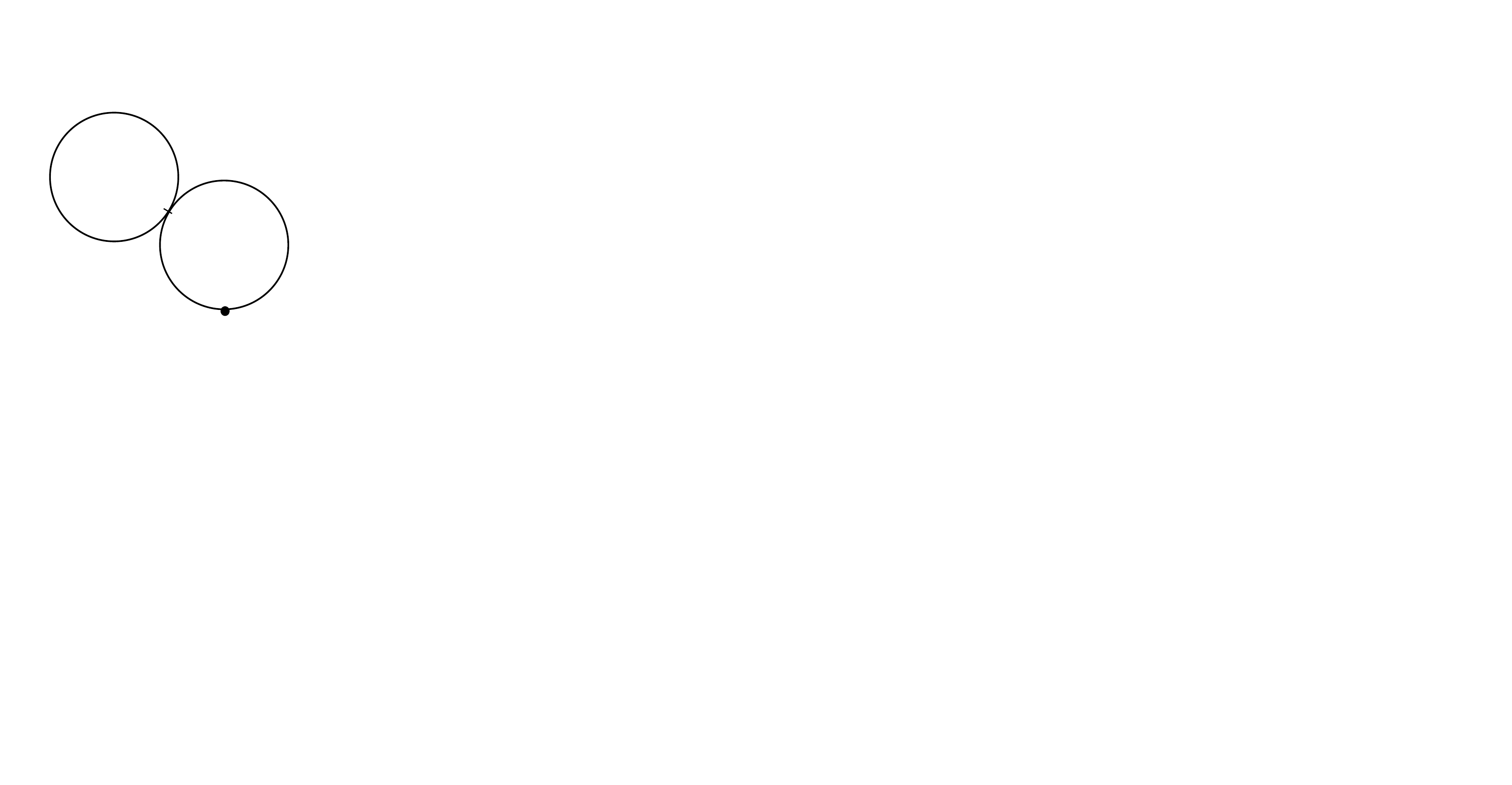
    \caption{Non-associativity in $\cact^1$}\label{fig:CactCompUnassoc}
    \end{figure}

\begin{remark}[Composition in the graph cobordism category]\label{rem:cobcomposition}
This composition of normalized cacti is highly relevant to the graph model of the cobordism category of Riemann surfaces mentioned in the introduction of the paper. 
To model the gluing of cobordisms, we use graphs to represent surfaces with potentially many incoming and outgoing boundary components. 
Normalized cacti are a simple case of this model, representing surfaces of genus zero with potentially many inputs but always just one output.
Two surfaces are glued by attaching the incoming boundaries of the first surface to the outgoing boundaries of the second.
According to \cite{godin07} (see also \cite[Theorem A]{egas_comparing}), we may assume that all incoming boundaries of a surface are disjoint embedded circles in the corresponding graph (like the lobes of the cactus, if they where pulled apart a little bit). Since these boundary circles are disjoint in the graph, they can be scaled independently to each match the length of an outgoing boundary in the graph of the second surface, just like scaling the $i$th lobe of the first cactus in $\cact^1$ composition. There is no obvious way to define a ``$\cact$-like" composition for such more general graphs, because the outgoing circles of the second surface cannot be assumed to be disjoint, and hence cannot be scaled independently to the appropriate length.  (See \cite[Section 3.3]{egas_comparing} for more details about this gluing of fat graphs.) 
\end{remark}

\subsection{An operad $MS^+$ that contains $\cact^1$}\label{subsection: MS operad}
In their proof of the Deligne conjecture, McClure and Smith \cite{MS02,MS04} introduced an operad $MS$ equivalent the little $2$-discs operad.\footnote{The operad $MS$ is denoted $\mathcal{C}'$ in \cite[Section 5]{MS02}.} Later, Salvatore \cite[Section 4]{Sal09} used similar methods to show directly that the operad $MS$ is equivalent to the non-normalized cactus operad $\cact$.
Here we will define a variant of $MS$ called $MS^+$, and, following \cite{Sal09}, start by showing that it is an operad by proving that it embeds in $\CoEnd(S^1)$.
We then show that normalized cacti are a subspace of the underlying symmetric sequence of $MS^+$ and that their composition can be written in terms of compositions in $MS^+$.

\medskip

The space of operations $MS^+(k)$ is built from a space $\mathcal{F}(k)$, which we will show is homeomorphic to $\cact^1(k)$.
In fact, we can think of an element of $\F(k)$ as the outer circle of a cactus.
 
\begin{definition}\cite[Definition 4.1]{Sal09}\label{def:Fn}
Let $S^1=[0,1]/0\!\sim\!1$ be the circle of circumference equal to 1.  
    Define $\mathcal{F}(k)$ as the space of partitions $x=(I_1(x),\dots,I_k(x))$ of $S^1$ into closed $1$-manifolds $I_j(x)\subset S^1$, each of which have total length $\frac{1}{k}$, with pairwise disjoint interiors, and such that 
    \begin{itemize}
    \item[$(*)$] there does not exist a cyclically ordered $4$-tuple $(z_1;z_2;z_3;z_4)\in S^1$ with $z_1,z_3\in \mathring{I}_j(x)$ and $z_2,z_4\in \mathring{I}_i(x)$, for $j\neq i$.  
    \end{itemize}
    For an example, see Figure~\ref{fig:x in F(3)}. The topology of $\mathcal{F}(k)$ is induced by the metric measuring the size of the overlap between partitions: 
    for $x,y\in \mathcal{F}(k)$, $d(x,y)=1-\sum_{j=1}^k\ell(I_j(x)\cap I_j(y))$ for $\ell$ the length function on submanifolds of $S^1$. 

    The symmetric group $\Sigma_k$ acts on $\mathcal{F}(k)$ by reindexing the labels of the $1$-manifolds. 
\end{definition}

\begin{figure}[h!t]
        \hfill
        \subfigure[$x\in\mathcal{F}(3)$.]{\label{fig:x in F(3)}
        \centering\def\svgwidth{0.2\columnwidth}
\begingroup%
  \makeatletter%
  \providecommand\color[2][]{%
    \errmessage{(Inkscape) Color is used for the text in Inkscape, but the package 'color.sty' is not loaded}%
    \renewcommand\color[2][]{}%
  }%
  \providecommand\transparent[1]{%
    \errmessage{(Inkscape) Transparency is used (non-zero) for the text in Inkscape, but the package 'transparent.sty' is not loaded}%
    \renewcommand\transparent[1]{}%
  }%
  \providecommand\rotatebox[2]{#2}%
  \newcommand*\fsize{\dimexpr\f@size pt\relax}%
  \newcommand*\lineheight[1]{\fontsize{\fsize}{#1\fsize}\selectfont}%
  \ifx\svgwidth\undefined%
    \setlength{\unitlength}{223.25640106bp}%
    \ifx\svgscale\undefined%
      \relax%
    \else%
      \setlength{\unitlength}{\unitlength * \real{\svgscale}}%
    \fi%
  \else%
    \setlength{\unitlength}{\svgwidth}%
  \fi%
  \global\let\svgwidth\undefined%
  \global\let\svgscale\undefined%
  \makeatother%
  \begin{picture}(1,1.0210514)%
    \lineheight{1}%
    \setlength\tabcolsep{0pt}%
    \put(0,0){\includegraphics[width=\unitlength,page=1]{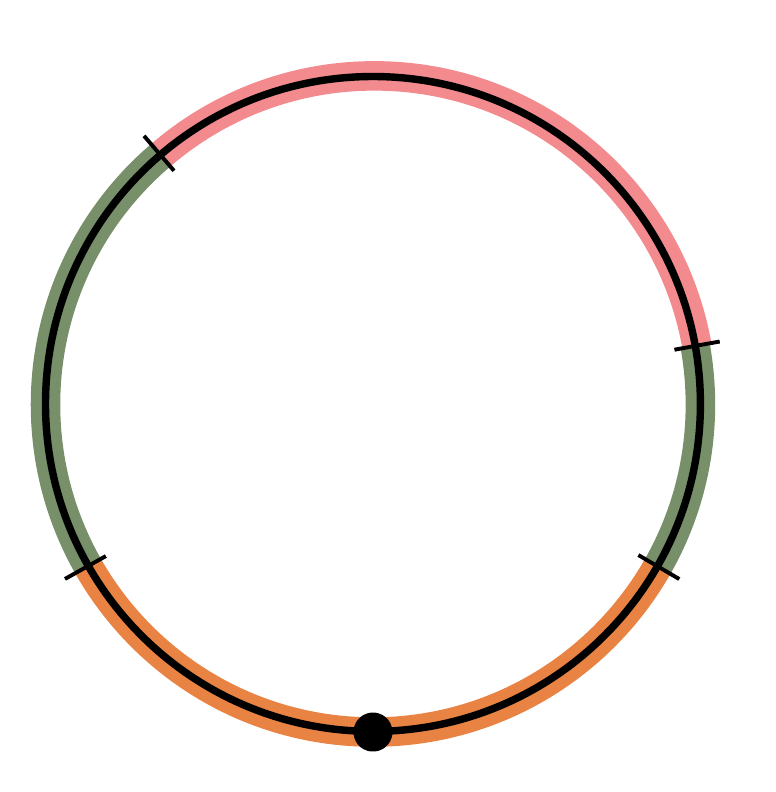}}%
    \put(0.32362355,0.01549087){\color[rgb]{0,0,0}\makebox(0,0)[rt]{\lineheight{1.25}\smash{\begin{tabular}[t]{r}$I_1(x)$\end{tabular}}}}%
    \put(0.05788137,0.22920179){\color[rgb]{0,0,0}\makebox(0,0)[rt]{\lineheight{1.25}\smash{\begin{tabular}[t]{r}$\frac{1}{6}$\end{tabular}}}}%
    \put(0.16338278,0.89351173){\color[rgb]{0,0,0}\makebox(0,0)[rt]{\lineheight{1.25}\smash{\begin{tabular}[t]{r}$\frac{7}{18}$\end{tabular}}}}%
    \put(0.94171511,0.55538988){\color[rgb]{0,0,0}\makebox(0,0)[lt]{\lineheight{1.25}\smash{\begin{tabular}[t]{l}$\frac{13}{18}$\end{tabular}}}}%
    \put(0.90591395,0.21329016){\color[rgb]{0,0,0}\makebox(0,0)[lt]{\lineheight{1.25}\smash{\begin{tabular}[t]{l}$\frac{15}{18}$\end{tabular}}}}%
    \put(0.02866103,0.56771577){\color[rgb]{0,0,0}\makebox(0,0)[rt]{\lineheight{1.25}\smash{\begin{tabular}[t]{r}$I_2(x)$\end{tabular}}}}%
    \put(0.70597914,0.96179504){\color[rgb]{0,0,0}\makebox(0,0)[t]{\lineheight{1.25}\smash{\begin{tabular}[t]{c}$I_3(x)$\end{tabular}}}}%
  \end{picture}%
\endgroup%
}\hfill
        \subfigure[Maps $c_x^i$.]{\label{fig:maps c-x}
        \includegraphics[width=0.6\linewidth]{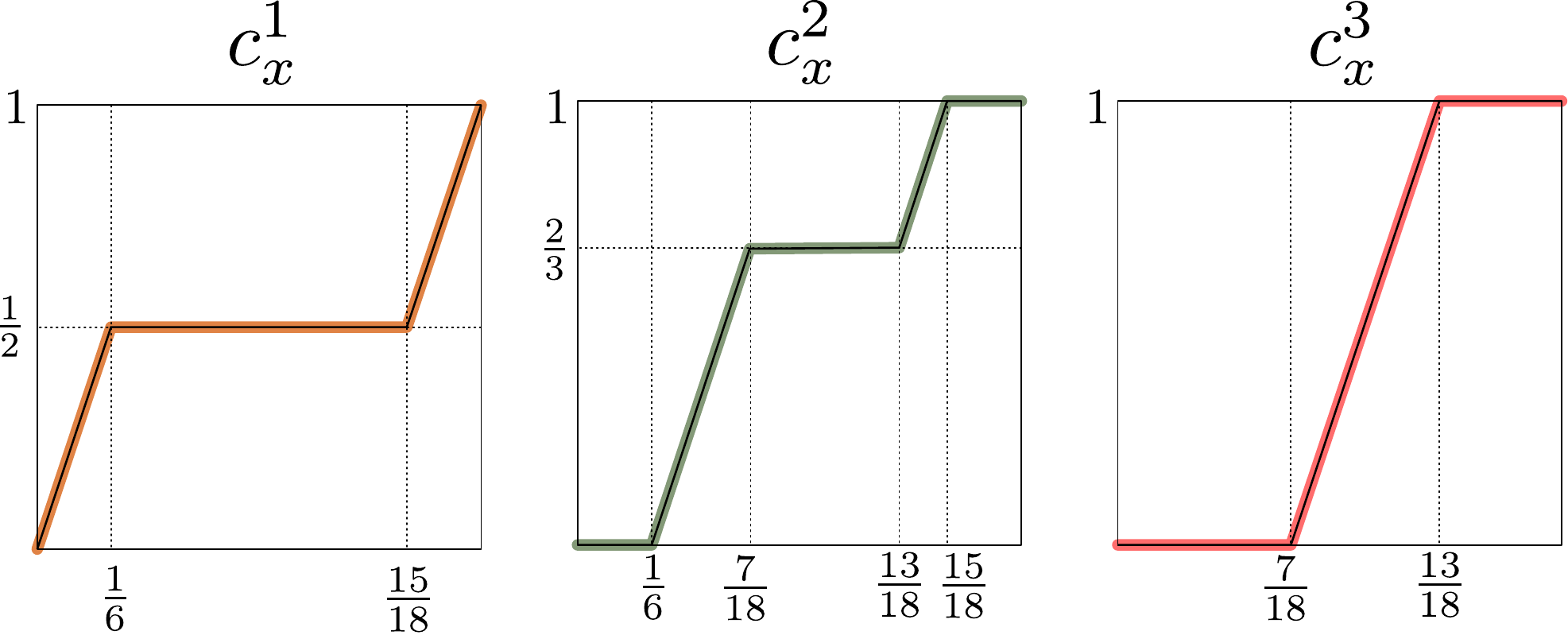}}
        \vspace{-12pt}
        \caption{Element of $x\in\mathcal{F}(3)$ and associated projections}\label{fig:F}
\end{figure}

\begin{definition}\label{def: cactus map}
Given an element $x\in\mathcal{F}(k)$, we associate to each $I_j(x)$ a projection map $c_x^j:S^1\to S^1$ that takes the quotient of $S^1$ under the identification of all the points in the same path component of $\overline{S^1\setminus\mathring{I_j}}$ and then scales 
this circle by a factor of $k$. See Figure~\ref{fig:maps c-x} for an example. The \emph{cactus map} $c_x\colon S^1 \to (S^1)^k$ is the collection of maps $c_x:=(c_x^1,\dots,c_x^k)$. 
Then there is a map
\begin{eqnarray*}
\begin{tikzcd}[row sep=tiny]c:\mathcal{F}(k)\arrow[r]&
Map(S^1,(S^1)^k) \\
x \ar[r, mapsto] & c_x=(c_x^1,\dots,c_x^k)\colon S^1\to (S^1)^k.\end{tikzcd}
\end{eqnarray*}
\end{definition}

For any $x\in \mathcal{F}(k)$, we also use $x$ to denote the configuration of circles in the image of the cactus map $c_x\colon S^1\to (S^1)^k$.
Condition $(*)$ in Definition~\ref{def:Fn} guarantees that this configuration is treelike, as it forces the submanifolds $I_j(x)$ to be nested. The global basepoint of $x$ is the image of the basepoint of $S^1$ and a planar structure is induced by the orientation of the source $S^1$ (see \cite[Definition 4.2]{Sal09}). Since each part of a partition $x\in \mathcal{F}(k)$ has equal length, $x$ is a normalized cactus as shown in Figure~\ref{fig:FnCact}. This is the sketch of the proof for the next lemma.

\begin{lemma}\cite[Section 4]{Sal09} \label{lem: cact1 homeo to F}
    For each $k\ge 1$, the space $\mathcal{F}(k)$ is homeomorphic to $\cact^1(k)$. 
\end{lemma}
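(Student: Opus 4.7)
The plan is to exhibit a continuous bijection $\Psi \colon \mathcal{F}(k) \to \cact^1(k)$ with continuous inverse, where $\Psi(x)$ is essentially the image of the cactus map $c_x$. Given $x = (I_1(x), \dots, I_k(x)) \in \mathcal{F}(k)$, the $j$-th projection $c_x^j \colon S^1 \to S^1$ collapses the complement of $\mathring{I}_j(x)$ and rescales by $k$, producing a circle of unit circumference which we declare to be the $j$-th lobe. The nesting condition $(*)$ forces the images $c_x^i(S^1)$ and $c_x^j(S^1)$ to meet in at most one point for $i \ne j$, and guarantees that the resulting configuration of $k$ unit circles is treelike. The basepoint $0 \in S^1$ maps to a global basepoint of the cactus, and the orientation of $S^1$ together with the order in which the outer traversal visits the lobes determines a planar structure. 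Because the outer contour is parametrized by $S^1$ itself, each lobe automatically has its basepoint at the closest point to the global basepoint along the outer circle, so $\Psi(x)$ is spineless and normalized.

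Conversely, given $C \in \cact^1(k)$, one constructs $x \in \mathcal{F}(k)$ by parametrizing the outer boundary of $C$. Traversing the outer contour, starting from the global basepoint in the direction dictated by the planar structure, yields a continuous surjection $\pi \colon S^1_{\mathrm{tot}} \to C$ from a circle of total length $k$, since each of the $k$ unit lobes is traced exactly once (cf.\ \cite[Definition 4.2]{Sal09}). After rescaling the domain to the unit circle $S^1$, set $I_j(x) := \pi^{-1}(\ell_j)$, where $\ell_j$ is the $j$-th lobe. Each $I_j(x)$ is then a closed $1$-submanifold of length $\tfrac{1}{k}$ with pairwise disjoint interiors, and the treelike nesting of lobes in $C$ precisely rules out the existence of a forbidden cyclically ordered $4$-tuple, establishing $(*)$. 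That $\Psi$ and this inverse assignment are mutually inverse is then routine, since both encode the same data, namely how the unit-circumference outer boundary traverses each lobe.

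Continuity in both directions follows from the explicit descriptions: the metric $d(x,y) = 1 - \sum_j \ell(I_j(x) \cap I_j(y))$ directly controls the pointwise distance between the cactus maps $c_x$ and $c_y$, and conversely a small deformation of a normalized cactus produces only a small change in the preimages of the lobes under the outer parametrization. The main obstacle will be bookkeeping: one must verify carefully that the planar structure, the basepoint of each lobe, and the global basepoint match consistently under $\Psi$ and $\Psi^{-1}$, and that the topology on $\cact^1(k)$ used in \cite{K05} agrees with the quotient topology induced by the metric on $\mathcal{F}(k)$. Since the analogous identification between $\mathcal{F}(k)$ and non-normalized spineless cacti is established in \cite[Section 4]{Sal09}, much of this verification can be imported directly, with the equal-length constraint on the $I_j(x)$ replacing Salvatore's overall rescaling step.
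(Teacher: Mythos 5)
Your proposal is correct and takes essentially the same route as the paper, which itself only sketches this argument---the cactus map $c_x$ with condition $(*)$ ensuring treelikeness, the global basepoint and planar structure induced from the source $S^1$, and the equal-length constraint giving normalization---and defers the remaining verifications to \cite[Section 4]{Sal09}. The only slip is cosmetic: each $c_x^i$ is surjective onto $S^1$, so it is the lobes $c_x(I_i(x))$, not the images $c_x^i(S^1)$, that meet pairwise in at most one point.
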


Recall the coendomorphism operad $\CoEnd(S^1)$ from Example~\ref{Example: CoEnd}, whose underlying symmetric sequence is a collection of $\CoEnd(k)(S^1):=\Map(S^1,(S^1)^{k})$. 
We use the map $$c:\cact^1(k)\cong\mathcal{F}(k)\hookrightarrow \Map(S^1,(S^1)^{k})=\CoEnd(S^1)(k)$$ to define an embedding of symmetric sequences. 

\begin{lemma}\label{lem:cx}
    The map $c:\mathcal{F}(k)\to Map(S^1,(S^1)^k)$ is a topological embedding.
\end{lemma}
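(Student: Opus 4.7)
The plan is to establish three properties: (i) $c$ is injective, (ii) $c$ is continuous, and (iii) the set-theoretic inverse $c(\mathcal F(k))\to \mathcal F(k)$ is continuous. Together these make $c$ a topological embedding. For injectivity I would observe that, by construction, $c_x^j$ collapses the closure of the complement of $\mathring I_j(x)$ to the basepoint $0\in S^1$ and restricts to a (rescaled arclength) homeomorphism $\mathring I_j(x)\to S^1\setminus\{0\}$. Therefore $I_j(x) = \overline{(c_x^j)^{-1}(S^1\setminus\{0\})}$ can be recovered from $c_x$, and $c_x=c_y$ forces $x=y$.

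The key tool for (ii) and (iii) will be the explicit formula
\[
c_x^j(t) \;=\; k\cdot\ell\bigl(I_j(x)\cap [0,t]\bigr) \pmod 1,
\]
where $[0,t]\subset S^1$ is the positively oriented arc from the global basepoint $0$ to $t$. (The right-hand side is insensitive to the choice of starting point, as long as that point lies in $\overline{S^1\setminus\mathring I_j(x)}$, which $0$ does since $\ell(I_j(x))=1/k<1$.) Once this formula is verified, continuity of $c$ follows easily: $d(x_n,x)\to 0$ in $\mathcal F(k)$ implies $\ell(I_j(x_n)\triangle I_j(x))\to 0$, hence
\[
\bigl|\ell(I_j(x_n)\cap [0,t]) - \ell(I_j(x)\cap [0,t])\bigr|\;\le\; \ell(I_j(x_n)\triangle I_j(x)) \;\to\; 0
\]
uniformly in $t$, giving $c_{x_n}\to c_x$ uniformly.

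For the continuity of the inverse, I would suppose $c_{x_n}\to c_x$ uniformly and show $d(x_n,x)\to 0$. Given $\epsilon>0$, the compact set $K_\epsilon := (c_x^j)^{-1}(S^1\setminus B_\epsilon(0))$ is contained in $\mathring I_j(x)$ and has length $(1-2\epsilon)/k$, since $c_x^j$ scales arclength along $\mathring I_j(x)$ by $k$. By uniform convergence, for $n$ large one has $c_{x_n}^j(t)\notin B_{\epsilon/2}(0)$ for all $t\in K_\epsilon$, which forces $K_\epsilon\subset \mathring I_j(x_n)$ (since $c_{x_n}^j$ vanishes off $\mathring I_j(x_n)$). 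Hence $\ell(I_j(x)\cap I_j(x_n))\ge (1-2\epsilon)/k$ for large $n$, and letting $\epsilon\to 0$ yields $\ell(I_j(x_n)\cap I_j(x))\to 1/k$, i.e.\ $d(x_n,x)\to 0$.

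I expect the main obstacle to be the careful verification of the length formula for $c_x^j$, since it requires tracking how the collapsing of components of $\overline{S^1\setminus\mathring I_j(x)}$ interacts with the $k$-fold arclength rescaling, especially when $I_j(x)$ has several components or when its complement's components straddle the basepoint $0$. Once this formula is in hand, the measure-theoretic arguments above are routine.
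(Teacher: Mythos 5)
Your overall architecture (injectivity, continuity, continuity of the inverse on the image) is the right shape, and part (ii) — continuity of $c$ via the formula $c_x^j(t)=k\,\ell\bigl(I_j(x)\cap[0,t]\bigr)\bmod 1$ and the bound by $\ell(I_j(x_n)\,\triangle\,I_j(x))$ — is correct and essentially matches the paper's (much terser) continuity argument. But there is a genuine error at the heart of both your injectivity and your inverse-continuity arguments: you assert that $c_x^j$ collapses all of $\overline{S^1\setminus\mathring I_j(x)}$ to the basepoint $0$. By Definition~\ref{def: cactus map}, the quotient identifies points only \emph{within each path component} of $\overline{S^1\setminus\mathring I_j(x)}$, so each gap is collapsed to its own point of the target circle, and only the gap(s) meeting the global basepoint land at $0$. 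Your own formula already shows this: the constant value of $c_x^j$ on a gap $G$ is $k\,\ell(I_j(x)\cap[0,\min G])$, which is generically nonzero. Consequently $\overline{(c_x^j)^{-1}(S^1\setminus\{0\})}$ contains every gap of $I_j(x)$ other than the one(s) through $0$, so it does not equal $I_j(x)$ when $I_j(x)$ is disconnected; likewise $K_\epsilon=(c_x^j)^{-1}(S^1\setminus B_\epsilon(0))$ is \emph{not} contained in $\mathring I_j(x)$, and the claim that ``$c_{x_n}^j$ vanishes off $\mathring I_j(x_n)$'' is false. (A smaller slip: $\ell(I_j(x))=1/k<1$ does not imply $0\in\overline{S^1\setminus\mathring I_j(x)}$ — the basepoint may lie in the interior of $I_j(x)$ — though your length formula is still valid with the convention $c_x^j(0)=0$.)

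The fix is the recovery mechanism the paper actually uses: $I_j(x)$ is the closure of the locus where $c_x^j$ is locally non-constant, equivalently where $(c_x^j)'=k$; this is well defined because $c_x^j$ is piecewise linear with slopes only $0$ and $k$. That immediately gives injectivity. For continuity of the inverse, replace your $K_\epsilon$ argument by the observation that $\ell\bigl(I_j(x)\cap[0,t]\bigr)$ equals $\tfrac1k$ times the total variation of $c_x^j$ on $[0,t]$, which determines $I_j(x)$ and depends continuously on $c_x^j$ (for step maps of this restricted form, uniform closeness of $c_{x_n}^j$ and $c_x^j$ forces their variation functions, hence the sets $I_j(x_n)$ and $I_j(x)$, to be close in the metric of Definition~\ref{def:Fn}); alternatively, one can note that $\mathcal F(k)$ is compact and the target Hausdorff, so a continuous injection is automatically an embedding. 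Until one of these repairs is made, your injectivity and inverse-continuity steps do not go through as written.
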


\begin{proof}
We  first check injectivity.  Given a map $c_x=(c_x^1,\dots,c_x^k)$ in the image of $c$, we can completely determine $x\in\mathcal{F}(k)$. We know that each $c_x^j$  is a ``step-map'' with linear of slope $k$ over its non-constant parts, by the definition of $c$. (See Figure~\ref{fig:maps c-x}.) Then $I_j(x)$ is precisely the subset of points of $S^1$ where the derivative $(c_x^j)'$ equals $k$.
Continuity of $c$ follows from the fact that the topology in the mapping space can be defined using the convergence metric, using likewise the metric on $S^1$. 

\end{proof}

This embedding of symmetric sequences does not extend to an embedding of operads.  As already mentioned, $\cact^1$ is not an operad and one can check that the image of $c$ is not a suboperad of $\CoEnd(S^1)$. Indeed, if we compose two elements in $\CoEnd(S^1)$ that came from elements of $\mathcal{F}$, their composition will not be in the image of any $\mathcal{F}(k)$ because all elements in the image of $\mathcal{F}(k)$ are piecewise linear graphs of slope $0$ or $k$, and this property is not preserved by the composition in $\CoEnd(S^1)$.

\begin{figure}[ht]
\centering
\begin{minipage}{0.4\textwidth}
    \centering\def\svgwidth{0.5\columnwidth}

\end{minipage}
\begin{minipage}{0.4\textwidth}
    \centering\def\svgwidth{0.5\columnwidth}
\begingroup%
  \makeatletter%
  \providecommand\color[2][]{%
    \errmessage{(Inkscape) Color is used for the text in Inkscape, but the package 'color.sty' is not loaded}%
    \renewcommand\color[2][]{}%
  }%
  \providecommand\transparent[1]{%
    \errmessage{(Inkscape) Transparency is used (non-zero) for the text in Inkscape, but the package 'transparent.sty' is not loaded}%
    \renewcommand\transparent[1]{}%
  }%
  \providecommand\rotatebox[2]{#2}%
  \newcommand*\fsize{\dimexpr\f@size pt\relax}%
  \newcommand*\lineheight[1]{\fontsize{\fsize}{#1\fsize}\selectfont}%
  \ifx\svgwidth\undefined%
    \setlength{\unitlength}{177.99999619bp}%
    \ifx\svgscale\undefined%
      \relax%
    \else%
      \setlength{\unitlength}{\unitlength * \real{\svgscale}}%
    \fi%
  \else%
    \setlength{\unitlength}{\svgwidth}%
  \fi%
  \global\let\svgwidth\undefined%
  \global\let\svgscale\undefined%
  \makeatother%
  \begin{picture}(1,1.18539333)%
    \lineheight{1}%
    \setlength\tabcolsep{0pt}%
    \put(0,0){\includegraphics[width=\unitlength,page=1]{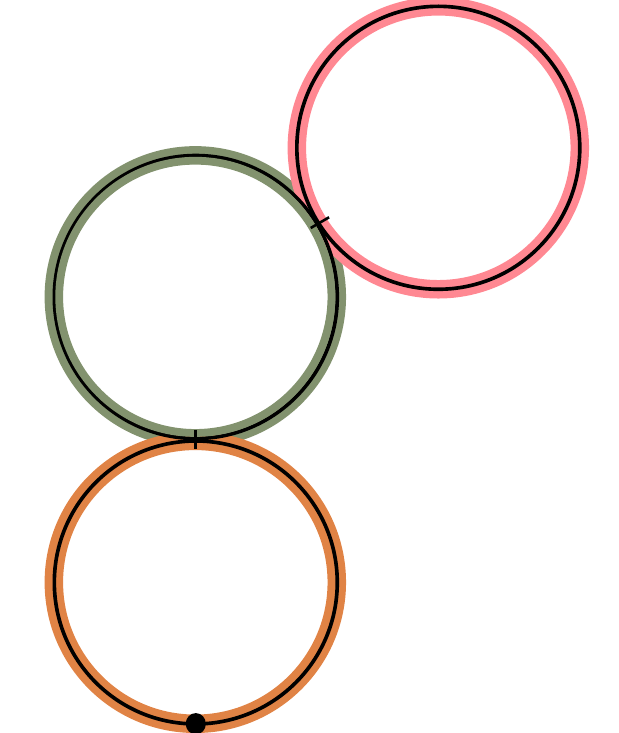}}%
    \put(0.0537004,0.18938584){\color[rgb]{0,0,0}\makebox(0,0)[rt]{\lineheight{1.25}\smash{\begin{tabular}[t]{r}$\frac{1}{2}$\end{tabular}}}}%
    \put(0.57824632,0.18938584){\color[rgb]{0,0,0}\makebox(0,0)[lt]{\lineheight{1.25}\smash{\begin{tabular}[t]{l}$\frac{1}{2}$\end{tabular}}}}%
    \put(0.04503538,0.67496348){\color[rgb]{0,0,0}\makebox(0,0)[rt]{\lineheight{1.25}\smash{\begin{tabular}[t]{r}$\frac{2}{3}$\end{tabular}}}}%
    \put(0.56671112,0.55356915){\color[rgb]{0,0,0}\makebox(0,0)[lt]{\lineheight{1.25}\smash{\begin{tabular}[t]{l}$\frac{1}{3}$\end{tabular}}}}%
    \put(0.95339134,1.00446274){\color[rgb]{0,0,0}\makebox(0,0)[lt]{\lineheight{1.25}\smash{\begin{tabular}[t]{l}$1$\end{tabular}}}}%
    \put(0.31566938,0.21155817){\color[rgb]{0,0,0}\makebox(0,0)[t]{\lineheight{1.25}\smash{\begin{tabular}[t]{c}$1$\end{tabular}}}}%
    \put(0.31556232,0.67558796){\color[rgb]{0,0,0}\makebox(0,0)[t]{\lineheight{0.5}\smash{\begin{tabular}[t]{c}$2$\end{tabular}}}}%
    \put(0.70864312,0.916027){\color[rgb]{0,0,0}\makebox(0,0)[t]{\lineheight{1.25}\smash{\begin{tabular}[t]{c}$3$\end{tabular}}}}%
  \end{picture}%
\endgroup%

\end{minipage}
\caption{An element $x$ of $\mathcal{F}(3)$ and the corresponding normalized cactus $c_x$. }\label{fig:FnCact}

\end{figure}

Here we define the symmetric sequence $MS^+=\{ MS^+(k)\}_{k \geq 0}$, which is built from $\mathcal{F}(k)$ and a collection $Mon^+(I,\partial I)$ of scaling maps on the interval $I$. 
It has the important property that $\cact^{1}(k)\subset MS^+(k)$ for each $k\geq 0$. 

\begin{definition}[$MS^+$ as a symmetric sequence]
    For each $k\ge 0$, we define the space $MS^+(k)$ as 
        \begin{align*}
        MS^+(0) &= *\\
        MS^+(k) &= \mathcal{F}(k) \times Mon^+(I,\partial I)
        \end{align*}
    where $Mon^+(I,\partial I)$ is the space of strictly monotone self-maps of $I$ that restrict to the identity on $\partial I$. We consider $Mon^+(I,\partial I)$ as a subspace of the space of self-maps of $S^1=I/\partial I$. For each $k$, there is an action of the symmetric group $\Sigma_k$ on $MS^+(k)$ by the reindexing of the labels of the $1$-manifolds in $\mathcal{F}(k)$.
\end{definition}

\begin{remark}The operad $MS$ that appears in \cite{MS02,MS04,Sal09} has an underlying symmetric sequence obtained by replacing $Mon^+(I,\partial I)$ by the larger space $Mon(I,\partial I)$ of weakly monotone maps. The inclusion $MS^+\hookrightarrow MS$ is a homotopy equivalence as both $Mon(I,\partial I)$ and $Mon^+(I,\partial I)$ are contractible (in fact, they are both convex). 
\end{remark}

In order to show that $MS^+$ is an operad, we start by showing that each space of operations $MS^+(k)$ embeds in $\CoEnd(S^1)(k)$. We also check that the operad composition of $\CoEnd(S^1)$ preserves the image of $MS^+$, and hence is a suitable composition for $MS^+$, thus making $MS^+$ a suboperad of $\CoEnd(S^1)$.

\begin{prop}\label{prop:MSCoend}
There is a topological embedding $\phi:MS^+(k)\to \CoEnd(S^1)(k)$ that sends $(x,f)\in MS^+(k)$ to the composite
$$S^1 \xrightarrow{f} S^1 \xrightarrow{c_x} (S^1)^k$$ where $c_x$ is the cactus map as in Definition~\ref{def: cactus map}. 
\end{prop}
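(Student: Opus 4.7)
The plan is to prove that $\phi$ is continuous, injective, and admits a continuous inverse on its image by deriving an explicit reconstruction formula that recovers $(x,f)$ from $\phi(x,f)$.

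Continuity of $\phi$ is immediate from the construction: Lemma~\ref{lem:cx} gives that $x\mapsto c_x$ is continuous from $\mathcal{F}(k)$ into $\Map(S^1,(S^1)^k)$, and composition $(h,f)\mapsto h\circ f$ is continuous in the compact-open topology on mapping spaces (since $S^1$ is locally compact Hausdorff).

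For injectivity and continuity of $\phi^{-1}$ on its image, the key step is the following. Identify every basepoint-preserving, degree-one, weakly monotone self-map of $S^1=\mathbb{R}/\mathbb{Z}$ with its unique weakly increasing lift $[0,1]\to[0,1]$ fixing both endpoints. Writing $\tilde f$ and $\tilde g^{\,j}$ for the lifts of $f$ and of the components $g^j=c_x^j\circ f$ of $g=\phi(x,f)$, I claim
\[
\tilde f(t)\,=\,\frac{1}{k}\sum_{j=1}^{k}\tilde g^{\,j}(t), \qquad t\in[0,1].
\]
To verify this, note that by construction $c_x^j$ is constant on each path component of $\overline{S^1\setminus\mathring I_j(x)}$ and is linear of slope $k$ on $\mathring I_j(x)$, so its associated Stieltjes measure equals $k\cdot\mathbf{1}_{\mathring I_j(x)}\,d\mathrm{Leb}$. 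Since $f$ is strictly increasing, a change of variables yields $d\tilde g^{\,j}=k\cdot\mathbf{1}_{f^{-1}(\mathring I_j(x))}\,d\tilde f$. Summing over $j$ and using that $\bigsqcup_j\mathring I_j(x)$ has full measure in $S^1$ (its complement being finite) gives $\sum_j d\tilde g^{\,j}=k\,d\tilde f$, and integrating from $0$ with the boundary condition $\tilde g^{\,j}(0)=c_x^j(0)=0$ (which holds since $c_x^j$ preserves the basepoint by construction) yields the claimed formula.

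The formula presents $f$ explicitly as a continuous function of $g$. Once $f$ is recovered, one has $c_x=g\circ f^{-1}$, and the inversion $f\mapsto f^{-1}$ is continuous on $Mon^+(I,\partial I)$, since uniform convergence of strictly monotone self-homeomorphisms of a compact interval forces uniform convergence of their inverses. Finally, Lemma~\ref{lem:cx} recovers $x$ continuously from $c_x$. Putting these pieces together shows that $\phi^{-1}$ on $\phi(MS^+(k))$ is continuous; combined with injectivity (which follows from the same formula) and continuity of $\phi$, this gives that $\phi$ is a topological embedding. The main technical obstacle is justifying the measure-theoretic identity $d\tilde g^{\,j}=k\cdot\mathbf{1}_{f^{-1}(\mathring I_j(x))}\,d\tilde f$, which requires a careful Riemann--Stieltjes change of variables since $c_x^j$ is only piecewise linear and $f$ need not be smooth.
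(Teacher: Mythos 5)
Your proposal is correct and rests on the same key identity as the paper's proof, namely $\frac{1}{k}\sum_{j=1}^{k}c_x^j=\mathrm{Id}_{S^1}$ (equivalently, in lifts, $\tilde f=\frac{1}{k}\sum_j\tilde g^{\,j}$), which the paper obtains by the direct pointwise observation that $\sum_j\tilde c_x^{\,j}(t)=k\sum_j\ell\bigl(I_j(x)\cap[0,t]\bigr)=kt$, so the Riemann--Stieltjes change of variables you flag as the main technical obstacle can be bypassed entirely. Your additional verification that $\phi^{-1}$ is continuous on the image (recovering $f$ by averaging, then $c_x=g\circ f^{-1}$, then $x$ via Lemma~\ref{lem:cx}) goes beyond the paper, whose proof only checks continuity and injectivity of $\phi$.
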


A version of Proposition~\ref{prop:MSCoend} is stated for the operad 
$MS$ in \cite[Section 4]{Sal09}. As we rely heavily on this result we 
give more complete details here. 

\begin{proof}
The fact that $\phi$ is continuous follows from Lemma~\ref{lem:cx}, so we are left to check that $\phi$ is injective. Let $x\in \mathcal{F}(k)$. 
Recall that the map $c_x=(c_x^1,\dots,c_x^k)
\colon S^1\to (S^1)^k$ is a collection of ``step-maps'' of  linear of slope $k$ over its non-constant parts.
Each map $c_x^j\colon S^1\to S^1$ identifies points in the same path component of $\overline{S^1\setminus \mathring{I}_j(x)}$ and linearly takes $I_j(x)$ (of length $1/k$) to a circle of circumference $1$.
So, these maps satisfy that
$$\frac{1}{k}\sum\limits_{j=1}^k c_x^j=\mbox{Id}_{S^1}.$$
In particular, this means that if $c_x\circ f=c_y\circ g$, then 
$$f=(\frac{1}{k}\sum\limits_{j=1}^k c_x^j)\circ f=\frac{1}{k}\sum\limits_{j=1}^k (c_x^j\circ f)= \frac{1}{k}\sum\limits_{j=1}^k (c_x^j\circ g)=(\frac{1}{k}\sum\limits_{j=1}^k  c_y^j)\circ g=g.$$
Moreover, as $f,g$ are strictly monotone and hence invertible, for each $j=1,\dots,k$, 
$$c^j_x=(c^j_x\circ f) \circ f^{-1}=(c^j_y\circ g) \circ f^{-1}=(c^j_y\circ g) \circ g^{-1}=c^j_y.$$
This shows that $c_x=c_y$ and therefore the map is injective.
\end{proof}

Proposition~\ref{prop:MSCoend} shows that $MS^+$ is a symmetric subsequence of $\CoEnd$ and this next lemma shows that the operad structure maps of $\CoEnd$ preserve this structure. 
\begin{lemma}\label{lemma:composition preserves MS}
The operad structure maps of $\CoEnd$ preserve the symmetric subsequence $MS^+$. 
\end{lemma}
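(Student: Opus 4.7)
The plan is to show directly that for $(x,f)\in MS^+(k)$, $(y,g)\in MS^+(j)$ and $1\le i\le k$, the $\circ_i$-composite of $\phi(x,f)=c_x\circ f$ and $\phi(y,g)=c_y\circ g$ in $\CoEnd(S^1)$ has the form $\phi(z,h)$ for a constructed $(z,h)\in MS^+(k+j-1)$. By the embedding of Proposition~\ref{prop:MSCoend} this will give closure of $MS^+$ under $\CoEnd$-composition.

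First I would \emph{derive} what $h$ must be. Any pre-cactus $(x,f)\in MS^+(k)$ satisfies $\tfrac{1}{k}\sum_{l=1}^{k}c_x^l=\id_{S^1}$, an identity already used in Proposition~\ref{prop:MSCoend}. If the composite equals $c_z\circ h$ for $z\in\mathcal{F}(k+j-1)$, then averaging its $k+j-1$ components forces
\[ h\;=\;\frac{1}{k+j-1}\Bigl[\sum_{l\ne i}c_x^l\circ f\;+\;j\,(g\circ c_x^i\circ f)\Bigr]. \]
I take this as the definition of $h$. A pointwise derivative computation gives $h'=\tfrac{k}{k+j-1}f'$ at points where $f$ lands outside $\mathring I_i(x)$, and $h'=\tfrac{jk}{k+j-1}(g'\circ c_x^i\circ f)\cdot f'$ at points where $f$ lands in $\mathring I_i(x)$. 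Both expressions are strictly positive since $f,g\in Mon^+(I,\partial I)$, and $h$ fixes $\partial I$ because $f$, $g$ and each $c_x^l$ do. Hence $h\in Mon^+(I,\partial I)$.

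For $z\in\mathcal{F}(k+j-1)$ I push forward the expected supports under $h$. For $l<i$ set $I_l(z):=h(f^{-1}(I_l(x)))$, and similarly for $l>i$ with indices shifted by $j-1$. For $1\le m\le j$ set $I_{i+m-1}(z):=h(J_m)$ where
\[ J_m\;:=\;f^{-1}(\mathring I_i(x))\cap(c_x^i\circ f)^{-1}(g^{-1}(I_m(y))) \]
is the ``proper'' support of the $m$th new component of the composite. A change-of-variables computation using the two derivative formulas for $h'$ yields $\ell(I_l(z))=\tfrac{k}{k+j-1}\cdot\ell(I_l(x))=\tfrac{1}{k+j-1}$ for $l\ne i$, and $\ell(I_{i+m-1}(z))=\tfrac{j}{k+j-1}\cdot\ell(I_m(y))=\tfrac{1}{k+j-1}$ for the $m$-lobes, so all lobes have the required length. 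They are closed $1$-manifolds with pairwise disjoint interiors because $h$ and $f$ are strictly monotone bijections and the preimages being pushed forward are themselves pairwise disjoint. The nesting condition $(\ast)$ for $z$ is inherited: monotone maps preserve cyclic order, so the $k-1$ old lobes stay non-interleaved, and the $j$ new lobes all lie inside $h(f^{-1}(I_i(x)))$, so any hypothetical interleaving among them would descend via $c_x^i\circ f$ to an interleaving of the $I_m(y)$, contradicting $(\ast)$ for $y$.

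Finally I would verify that $c_z\circ h$ equals the $\circ_i$-composite componentwise. For each $l\ne i$ and $x_0\in f^{-1}(\mathring I_l(x))$ the maps $c_z^l\circ h$ and $c_x^l\circ f$ share the same support $f^{-1}(I_l(x))=h^{-1}(I_l(z))$, fix the basepoint (the cactus projections and the maps $f,h\in Mon^+(I,\partial I)$ all fix $0$), and have equal derivative $(k+j-1)h'=kf'$; hence they coincide. For each $m$ and $x_0\in J_m$, the maps $c_y^m\circ g\circ c_x^i\circ f$ and $c_z^{i+m-1}\circ h$ both have support exactly $J_m$ and matching derivative $(k+j-1)h'=jk(g'\circ c_x^i\circ f)f'$, so they agree too. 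The main technical subtlety, and where I expect the bookkeeping to require care, is restricting the $J_m$ to $f^{-1}(I_i(x))$ rather than using the full preimage $(c_x^i\circ f)^{-1}(g^{-1}(I_m(y)))$: outside $f^{-1}(I_i(x))$ the map $c_x^i\circ f$ is locally constant, so the new components of the composite are locally constant there, and omitting these ``improper'' preimages is both necessary for disjointness of the $I_\bullet(z)$ and precisely what makes the supports of $c_z\circ h$ and of the composite coincide. Equivariance under the $\Sigma_k$ and $\Sigma_j$ actions is transparent from the construction.
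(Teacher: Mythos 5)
Your proposal is correct in substance but takes a genuinely different route from the paper. The paper factors the $\CoEnd$-composite in two stages: first it absorbs $1\times g\times 1$ into a modified partition, writing $(1\times g\times 1)\circ c_x=c_{\tilde x}\circ\tilde g$ with $\tilde x$ obtained by resizing the components of $I_i(x)$ according to $g$, and then it handles the insertion $(1\times c_y\times 1)\circ c_{\tilde x}=c_z\circ h_{\tilde x,y}$ via an explicit piecewise rescaling ($\tfrac{k}{j+k-1}$ on the lobes $r\neq i$ and $\tfrac{jk}{j+k-1}$ on the $i$th); the arbitrary monotone maps $f,g$ are only ever composed, never differentiated. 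You instead derive a single closed formula for $h$ by averaging the components of the composite against the identity $\tfrac{1}{k+j-1}\sum_l c_z^l=\id$ (the same trick the paper uses to prove injectivity in Proposition~\ref{prop:MSCoend}) and then verify $c_z\circ h$ agrees with the composite componentwise. This is more direct and makes the answer canonical rather than assembled, which is a real gain. Two caveats: elements of $Mon^+(I,\partial I)$ are only assumed strictly monotone, so $f$ and $g$ need not be differentiable (or even absolutely continuous), and your arguments phrased via $h'$, change of variables in $\int h'$, and ``equal derivative hence coincide'' should be recast in terms of increments $h(b)-h(a)$ over components of the relevant supports --- the formula for $h$ makes these increments computable directly since all but one summand is constant on each such component, so the argument survives, but as written it overreaches; and the sets $J_m$, being intersections of a closed set with the open set $f^{-1}(\mathring I_i(x))$, must be closed up before they qualify as the closed $1$-manifolds $I_{i+m-1}(z)$. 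You should also say a word about the mixed old--new case of condition $(\ast)$, though it follows immediately from the new lobes sitting inside $h(f^{-1}(I_i(x)))$ as you note.
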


\begin{proof}
It suffices to consider the composition operations $\circ_i$ in $\CoEnd$ as defined in Example~\ref{Example: CoEnd}. Given $(x,f)$ and $(y,g)$ in $MS^+$, we need to check that the composition 
\begin{equation}\label{eq: CoEnd composition}
    S^1 \xrightarrow{f} S^1 \xrightarrow{c_x} (S^1)^k \xrightarrow{1\times g\times 1} (S^1)^k \xrightarrow{1\times c_y\times 1} (S^1)^{j+k-1}
\end{equation} 
is in the image of $MS^+$, where $1\times g\times 1$ denotes the map where $g$ acts only on the $i$th circle. For this, we will show two things: 
\begin{enumerate}
\item[(i)] $(1\times g\times 1)\circ c_x=c_{\tilde x}\circ \tilde g$, for some $\tilde g\in Mon^+(I,\partial I)$ and $\tilde x\in \mathcal{F}(k)$,  
\item[(ii)] $(1\times c_y\times 1)\circ c_x=c_z \circ h_{x,y}$ for some $h_{x,y}\in Mon^+(I,\partial I)$ and $z\in \mathcal{F}(j+k-1)$.  
\end{enumerate}
For statement (i), the map $(1\times g\times 1)$ acts only on the $i$th circle, so in the composition with $c_x$ it only affects points in $I_i(x)$. 
Recall that we identify $S^1$ with $I/\partial I$. If $I_i(x)=J_1\sqcup \dots \sqcup J_r$ with each $J_s$ a subinterval of $[0,1]$ and $I_i(x)$ of total length $\frac{1}{k}$, then define $I_i(\tilde x)=\tilde J_1\sqcup \dots \sqcup \tilde J_r$ for $\tilde J_s$ an interval of length  $\frac{1}{k}\ell(g(J_s))$. 
We obtain $\tilde x\in \mathcal{F}(k)$ from $x$ by replacing each subinterval $J_s$ by the interval $\tilde J_s$ and shifting each path component of $[0,1]\setminus \mathring{I}_r(x)$ accordingly. This makes sense as, by construction, the total length of $I_i(\tilde x)$ is again $\frac{1}{k}$.  
Also $\tilde g$ is defined as the
canonical identification of $I_r(x)$ with $I_r(\tilde{x})$ for all $r\in \{1, \ldots, k\}$. 
See Figure~\ref{fig:tildegx} for an example. 

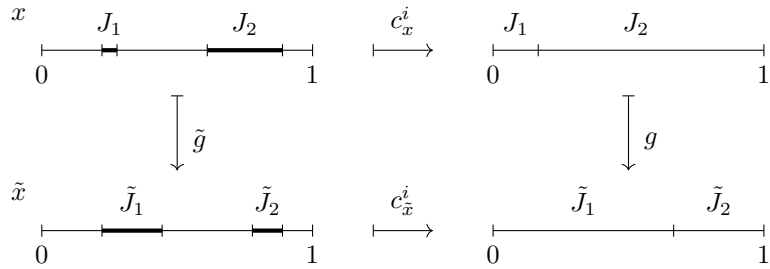
\begin{figure}[h]
    \centering
    \begin{tikzpicture} [scale=1, inner sep=2mm]
			\begin{scope}[scale=0.4]
			    \coordinate (0) at (0,1);
			    \coordinate (1) at (9, 1);
			    \draw (0)--(1);
			    
			    \foreach \x in {0, 2, 2.5, 5.5,8, 9} {
                  \draw (\x,0.8) -- (\x,1.2);
                }
                \node[below] at (0,1) {$0$};
                \node[below] at (9,1) {$1$};
			    \node[above] at (2.25, 1) {$J_1$};
			    \node[above] at (6.75, 1) {$J_2$};
			    \draw[ultra thick] (2, 1)--(2.5, 1);
			    \draw[ultra thick] (5.5, 1)--(8, 1);
			    
			    \draw[|->] (11, 1)--(13,1);
			    \node[above] at (12, 1) {$c_x^i$};
			    
			    \draw[|->] (4.5, -0.5)--(4.5,-3);
			    \node[right] at (4.5, -2) {$\tilde{g}$};
			    \node[above left] at (-0,1.5) {$x$};
			\end{scope}
			\begin{scope}[scale=0.4, xshift=0cm, yshift=-6cm]
			    \coordinate (0) at (0,1);
			    \coordinate (1) at (9, 1);
			    \draw (0)--(1);
			    
			    \foreach \x in {0, 2, 4, 7,8, 9} {
                  \draw (\x,0.8) -- (\x,1.2);
                }
                \node[below] at (0,1) {$0$};
                \node[below] at (9,1) {$1$};
			    \node[above] at (3, 1) {$\tilde{J}_1$};
			    \node[above] at (7.5, 1) {$\tilde{J}_2$};
			    \draw[ultra thick] (2, 1)--(4, 1);
			    \draw[ultra thick] (7, 1)--(8, 1);
			    
			    \draw[|->] (11, 1)--(13,1);
			    \node[above] at (12, 1) {$c_{\tilde{x}}^i$};
			    \node[above left] at (-0,1.5) {$\tilde{x}$};
			\end{scope}
			\begin{scope}[scale=0.4, xshift=15cm, yshift=0cm]
			    \coordinate (0) at (0,1);
			    \coordinate (1) at (9, 1);
			    \draw (0)--(1);
			    
			    \node[below] at (0,1) {$0$};
                \node[below] at (9,1) {$1$};
			    \foreach \x in {0, 1.5, 9} {
                  \draw (\x,0.8) -- (\x,1.2);
                }
			    \node[above] at (0.75, 1) {$J_1$};
			    \node[above] at (4.75, 1) {$J_2$};
			    
			    \draw[|->] (4.5, -0.5)--(4.5,-3);
			    \node[right] at (4.5, -2) {$g$};
			\end{scope}
			\begin{scope}[scale=0.4, xshift=15cm, yshift=-6cm]
			    \coordinate (0) at (0,1);
			    \coordinate (1) at (9, 1);
			    \draw (0)--(1);
			    
			    \node[below] at (0,1) {$0$};
                \node[below] at (9,1) {$1$};
			    \foreach \x in {0, 6, 9} {
                  \draw (\x,0.8) -- (\x,1.2);
                }
			    \node[above] at (3, 1) {$\tilde{J}_1$};
			    \node[above] at (7.5, 1) {$\tilde{J}_2$};
			\end{scope}
			\end{tikzpicture} 
    \caption{An example of the commutative diagram $g\circ c_x^i=c_{\tilde{x}}^i\circ \tilde{g}$}
    \label{fig:tildegx}
\end{figure}

For statement (ii), we consider a composition $(1\times c_y\times 1)\circ c_x:S^1\to (S^1)^{j+k-1}$ with $c_y$ on the $i$th position. Such a composition maps the $r$th partition $I_r(x)$, for $r\neq i$, to the $r$th (if $r<i$) or $(r+k-1)$st (if $r>i$) component in the target by a slope $k$ map, while $I_i(x)$ is mapped by  slope $jk$ maps to the remaining components. Let $h_{x,y}:S^1\to S^1$ be the rescaling map that scales each $I_r(x)$ by a factor $\frac{k}{j+k-1}$ for $r\neq i$, and $I_i(x)$ by a factor $\frac{jk}{j+k-1}$. Then the image under $h_{x,y}$ of each $I_r(x)$ will be of size $\frac{1}{j+k-1}$ for $r\neq i$, while $I_i(x)$ will have image of total size $\frac{j}{j+k-1}$. Note that this gives a well-defined map in $Mon^+(I,\partial I)$ as the sum of the length of the images $h_{x,y}(I_r(x))$ is $(k-1)\frac{1}{j+k-1}+\frac{j}{j+k-1}=1$. Subdividing the image under $h_{x,y}$ of $I_i(x)$ into $j$ parts as prescribed by $y$, together with the images of the other $I_r(x)$'s, then defines $z\in\mathcal{F}(j+k-1)$. The relation  $(1\times c_y\times 1)\circ c_x=c_z \circ h_{x,y}$ holds by construction. 
\end{proof}

Therefore we have shown that $MS^+$ is a suboperad of $\CoEnd(S^1)$ via the embedding $\phi$ in Proposition~\ref{prop:MSCoend}. 
\begin{definition}[$MS^+$ as an operad]
The symmetric sequence $MS^+=\{MS^+(k)\}_{k\in\mathbb{N}}$ becomes an operad with composition 
    \begin{equation}\label{eq: MS composition}
        (x,f)\bullet_i (y,g):=\phi^{-1}(\phi(x,f)\circ_i \phi(y,g))
    \end{equation}
where $\circ_i$ is the composition in $\CoEnd(S^1)$ defined in \eqref{eq: CoEnd composition}, and the pre-image exists as a consequence of Lemma \ref{lemma:composition preserves MS}. 
\end{definition}

We will often use scaling maps  in $Mon^+(I,\partial I)$ to encode the scaling of lobes in the composition of normalized cacti.
Given a partition $x=(I_1(x),\dots,I_k(x))\in\mathcal{F}(k)\cong  \cact^1(k)$, and natural numbers $m_1,\dots,m_k\ge 0$, we let
\begin{equation}\label{def: repramaterization maps g}
g=g(x;m_1,\dots,m_k): S^1\longrightarrow S^1
\end{equation}
be the element of $Mon^+(I,\partial I)$ that scales $I_j(x)$ by the factor 
$\frac{km_j}{m_1+\dots +m_k}$, $1\leq j\leq k$. 
Each $I_j(x)$ has total length $\frac{1}{k}$, so the image of $I_j(x)$ will have length $\frac{m_j}{m_1+\dots+ m_k}$ for each $1\leq j\leq k$. Note that $g(x;1,\dots,1)=\id$ is just the identity map on $S^1$. 

\begin{figure}[ht]
    \centering\def\svgwidth{0.8\columnwidth}
    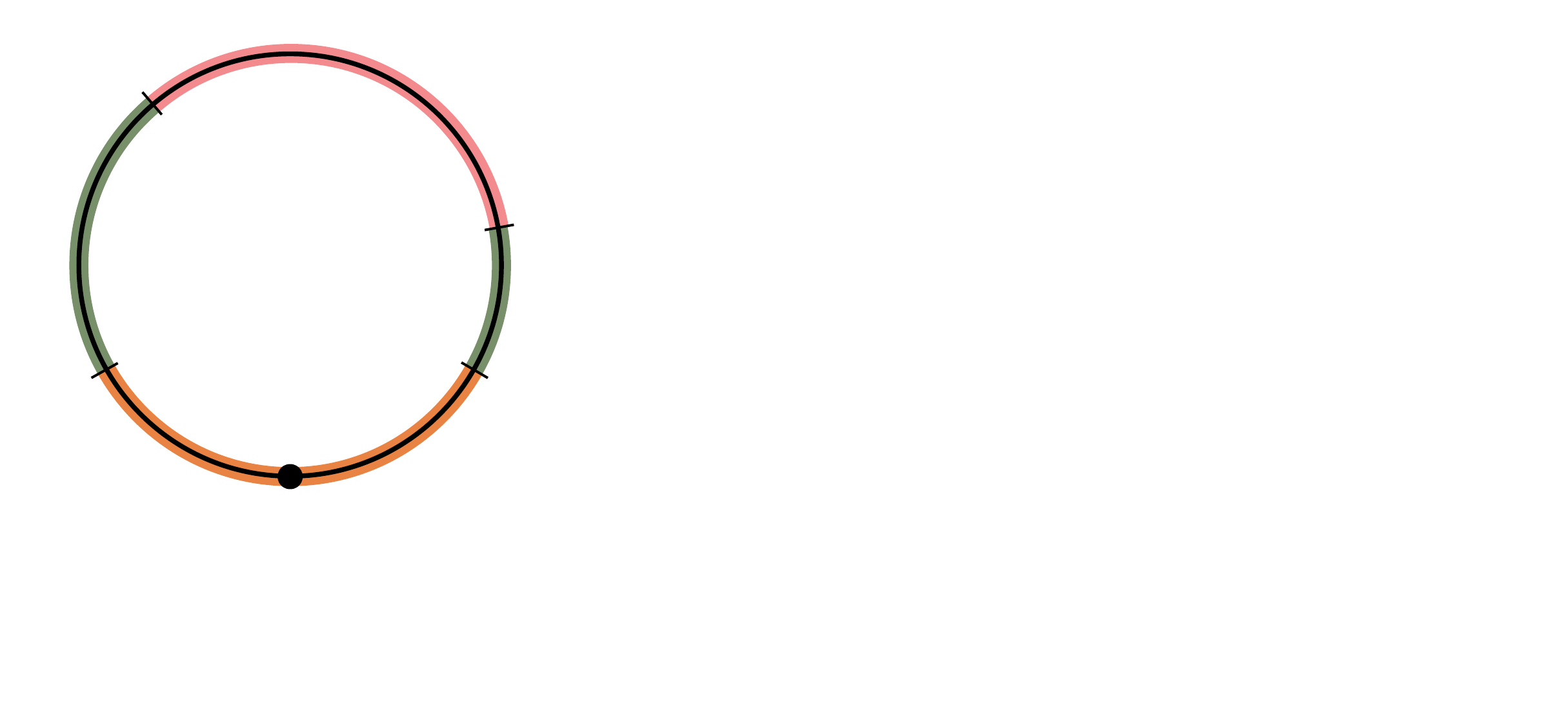
\caption{Map $g(x;2,1,1)$, for $x$ from Figure~\ref{fig:FnCact}.}\label{fig:example-g}

\end{figure}

We will now show that the $\circ_i$-compositions of normalized cacti from \eqref{eq: cact^1 composition}, \begin{equation*}
\begin{tikzcd} \circ_i:\cact^1(k)\times \cact^1(j)\arrow[r]& \cact^1(k+j-1), \end{tikzcd}
\end{equation*} 
and, more generally, the $\cact^1$--composition maps 
\[
\gamma_{\cact^1} \colon \cact^1(k)\times \cact^1(m_1)\times \cdots \times \cact^1(m_k) \longrightarrow
\cact^1(\Sigma_{i=1}^{k}m_i),
\] 
are restrictions of the corresponding compositions of appropriately chosen elements of $MS^+$.

For a collection of cacti $x\in \cact^1(k)$ and ${y_j}\in \cact^1(m_j)$, $1\leq j\leq k$, the quasi-operad composition $\gamma_{\cact^1}(x;y_1,\dots,y_k)$ scales each lobe of $x$ so that the $i$th lobe now has length $m_i$, and then inserts (without any further scaling) each ${y_i}$ in place of the $i$th scaled lobe. 

Under the homeomorphism $\cact^1(k)\cong \F(k)$ in Lemma~\ref{lem: cact1 homeo to F}, a normalized cactus $x\in \cact^1(k)$ precisely corresponds to a partition $x\in \mathcal{F}(k)$ of $[0,1]$ into $k$ submanifolds $I_j(x)$ of equal lengths $\frac{1}{k}$, satisfying the conditions of Definition~\ref{def:Fn}. 
Since the $i$th lobe of $x$ corresponds to the submanifold $I_i(x)$, there is a scaling by $\frac{1}{k}$ in the identification $\cact^1(k)$ to take a lobe of length $1$ to $I_i(x)$.
In the next lemma we will use $x\in \F(k)$ and $y_j\in \F(m_j)$ for $1\leq j\leq k$ to represent a sequence of cacti in $\cact^1(k)$ and $\cact^1(m_j)$ respectively.
We will still denote the composition by $\circ_i$ or $\gamma_{\cact^1}$.

\begin{lemma}\label{lem:rescaling}
Let $\gamma_{MS^+}$ and $\gamma_{\cact^1}$ denote the (quasi-)operad compositions in  $MS^{+}$ and $\cact^1$, respectively. Then for 
$x\in \F(k)$ and $y_j\in \F(m_j)$,
with $1\le j\le k$, we have 
$$\gamma_{MS^+}((x,g^{-1}(x;m_1,\dots,m_k));(y_1,id),\dots,(y_k,id))=(\gamma_{\cact^1}(x;y_1,\dots,y_k),id)$$
in $MS^+(\sum m_j)$. 
In  particular, 
$$(x,g^{-1}(x;1,\dots,m_i,\dots,1))\bullet_i (y_i,\id)=(x \circ_i y_i,\id)$$ 
where $\bullet_i$ denotes the composition \eqref{eq: MS composition} of $MS^+$ and $\circ_i$ represents the composition \eqref{eq: cact^1 composition} of $\cact^1$. 
\end{lemma}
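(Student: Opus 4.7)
By definition of the operad structure on $MS^+$, we have $\gamma_{MS^+}=\phi^{-1}\circ \gamma_{\CoEnd}\circ (\phi\times\cdots\times\phi)$, and Proposition~\ref{prop:MSCoend} tells us $\phi$ is injective. So the plan is to check the identity after applying $\phi$: writing $z:=\gamma_{\cact^1}(x;y_1,\ldots,y_k)\in\F(\sum_s m_s)$ and $g:=g(x;m_1,\ldots,m_k)$, one needs to verify the equality of maps $S^1\to (S^1)^{\sum_s m_s}$
\[
\gamma_{\CoEnd}\bigl(c_x\circ g^{-1};\,c_{y_1},\ldots,c_{y_k}\bigr)\ =\ c_z.
\]
Unwrapping iterated $\CoEnd$-composition (Example~\ref{Example: CoEnd}), the left-hand side is the $(\sum_s m_s)$-tuple whose $(j,p)$-component ($1\le j\le k$, $1\le p\le m_j$) is $c_{y_j}^{p}\circ c_x^{j}\circ g^{-1}$. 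So it suffices to show, component-by-component, that $c_{y_j}^{p}\circ c_x^{j}\circ g^{-1}=c_z^{r}$ with $r=\sum_{s<j}m_s+p$.

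Next, I would verify this component-wise identity using the piecewise-linear ``step-map'' description of the cactus maps from Definition~\ref{def: cactus map} together with the definition of $g$ in~\eqref{def: repramaterization maps g}. The target $c_z^{r}$ collapses $S^1$ outside $I_r(z)$ (of length $1/\sum_s m_s$) and scales $I_r(z)$ linearly onto $S^1$ with slope $\sum_s m_s$. On the left, $g^{-1}$ stretches the region $g(I_j(x))\subset S^1$ (of length $m_j/\sum_s m_s$) back to $I_j(x)$ (of length $1/k$) with slope $\sum_s m_s/(km_j)$ while compressing the complement of $g(I_j(x))$; then $c_x^{j}$ collapses outside $I_j(x)$ and scales $I_j(x)$ onto $S^1$ with slope $k$; finally $c_{y_j}^{p}$ collapses outside $I_p(y_j)$ and scales $I_p(y_j)$ (of length $1/m_j$) onto $S^1$ with slope $m_j$. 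The product of slopes is $\sum_s m_s$, as required.

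The key identification, which is the step I expect to be the main obstacle, is that the partition of $z$ inside $g(I_j(x))$ coincides with the pullback of $y_j$'s partition under $c_x^{j}\circ g^{-1}$; concretely, $I_r(z)=(c_x^{j}\circ g^{-1})^{-1}(I_p(y_j))\cap g(I_j(x))$. This is exactly how the normalized cactus composition $\gamma_{\cact^1}$ is defined in the $\F$-picture via Lemma~\ref{lem: cact1 homeo to F}: inserting $y_j$ into the $j$th lobe of $x$ amounts to rescaling the $j$th lobe by the factor encoded in $g$ and identifying it with $y_j$'s outer circle (with matched planar structure and basepoints). Once this identification is made precise, the slope computation above gives the correct linear map on $I_r(z)$, and any point outside $I_r(z)$ is sent to a collapse-point at some stage of the composite, matching the collapse behavior of $c_z^{r}$.

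The ``in particular'' statement follows by specializing to $m_s=1$ for all $s\neq i$: then each $y_s\in\F(1)$ is the unique point (the arity-one unit in $MS^+$), so the iterated composition collapses to a single $\bullet_i$ with $(y_i,\id)$, the reparametrization becomes $g(x;1,\ldots,m_i,\ldots,1)$, and the right-hand side becomes $(x\circ_i y_i,\id)$.
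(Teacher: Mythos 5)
Your proof is correct and follows essentially the same route as the paper's: both reduce the identity to an equality of composites in $\CoEnd(S^1)$ via the embedding $\phi$, verify that each component $c_{y_j}^{p}\circ c_x^{j}\circ g^{-1}$ is a step map of slope $\sum_s m_s$ whose non-constant locus has length $1/\sum_s m_s$, and identify the resulting partition with that of $\gamma_{\cact^1}(x;y_1,\dots,y_k)$. The paper packages the key identification you flag (that the non-constant loci reproduce the inserted partitions $I_p(y_j)$ in the correct proportions) into a commutative square involving scaled spaces $\F_{(m_1,\dots,m_k)}(k)$ and a factorization $\gamma_{\cact^1}=N\circ\gamma\circ S$, but the underlying length bookkeeping is the same as yours.
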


\begin{proof}

Let $\F_{(m_1,\dots,m_k)}(k)$ denote the scaled version of $\F(k)$ where the $i$th partition, $I_i$, now has length $m_i$ instead of $\frac{1}{k}$. In particular, we have that $\F(k)=\F_{\left(\frac{1}{k},\dots,\frac{1}{k}\right)}(k)$ and $\cact^1(k)=\F_{(1,\dots,1)}(k)$. Thus the homeomorphism $\cact^1(k)\cong \F(k)$ implies that the composition $\gamma_{\cact^1}$ on $\cact^1$ can be interpreted as a map in $\F$, written as

\begin{align*}
    \F(k)\times 
    &
    \left(
    \F(m_1)\times \dots \times \F(m_k)
    \right) \\
    &\xrightarrow{\ S\ } \ \F_{(m_1,\dots,m_k)}(k)\times \left(\F_{(1,\dots,1)}(m_1)\times \dots\times \F_{(1,\dots,1)}(m_k)\right) \\
    &\xrightarrow{\ \gamma_{}\ } \ \F_{(1,\dots,1)}(\sum_i m_i) \\
    &\xrightarrow{\ N\ }\  \F_{\frac{1}{\sum m_i},\dots,\frac{1}{\sum m_i}}(\sum_i m_i)=\F(\sum_i m_i) 
\end{align*}
where $S$ and $N$ are scaling and normalising maps and  the map labelled $\gamma_{}$ is the insertion map.

Our task is to write the composition $\gamma_{\cact^1}$ in terms of the operad $MS^+$. To do this, we will use 
scaling maps 
inside $Mon^+(I,\del I)$. 
More precisely, define a map
$$\F(k)\times   \left(
    \F(m_1)\times \dots \times \F(m_k)
    \right) 
     \xrightarrow{\hspace{7pt} G\hspace{8pt} } MS^+(k)\times (MS^+(m_1)\times \dots\times  MS^+(m_k))$$ that takes $(x;y_1,\dots,y_k)$ to $((x,g^{-1});(y_1,\id),\dots,(y_k,\id))$ where $g=g(x;m_1,\dots,m_k)\in Mon^+(I,\del I)$ is the map in equation~\eqref{def: repramaterization maps g}.
The statement we want to prove is that $(\gamma_{\cact^1}, \id)$ can be written as the composition 
\begin{align*}
    \F(k)\times 
    \left(\F(m_1)\times \dots \times \F(m_k)\right)
    & \xrightarrow{\hspace{7pt}G\hspace{8pt}} MS^+(k)\times (MS^+(m_1)\times \dots\times  MS^+(m_k)) \\[0.5em]
    & \xrightarrow{\gamma_{MS^+}} MS^+(\sum_i m_i).
\end{align*}
In particular,
we claim that the resulting element of $MS^+(\sum_i m_i) $ is in the image of $\cact^1$, that is, of the form $(z,\id)$. 

To prove this, we start by expressing $G$ as a composition $G'\circ S$, where $S$ is the scaling map in the description 
$\gamma_{\cact^1}=N\circ \gamma_{}\circ S$ given above and 
\begin{multline*}
    G'\colon \F_{(m_1,\dots,m_k)}(k)\times (\F_{(1,\dots,1)}(m_1)\times \dots\times \F_{(1,\dots,1)}(m_k))  \\ \longrightarrow MS^+(k)\times (MS^+(m_1)\times \dots\times MS^+(m_k))
\end{multline*}
is the map that takes a tuple $(x;y_1,\dots, y_k)$ to the tuple  $((N(x),g^{-1});(N(y_1),\id),\dots, (N(y_k),\id))$, with $N$ the normalization also as above. 
In order to compare $\gamma_{\cact^1}=N\circ \gamma_{}\circ S$ with $\gamma_{MS^+}\circ G' \circ S$, we have to show that 
the diagram 
$$\xymatrix{\F_{(m_1,\dots,m_k)}(k)\times (\F_{(1,\dots,1)}(m_1)\times \dots\times \F_{(1,\dots,1)}(m_k)) \ar[d]_{G'}
     \ar[r]^-{\gamma_{}} &\F_{(1,\dots,1)}(\sum_i m_i) \ar[d]^{(N, id)} \\
    MS^+(k)\times (MS^+(m_1)\times \dots\times MS^+(m_k)) \ar[r]^-{\gamma_{MS^+}}& MS^+(\sum_i m_i)   }$$
commutes, where the right vertical map takes $z$ to $(N(z),\id)$. 
To see this, let $(x;y_1,\dots,y_k)$ be an element in the top left corner of the square. Its image 
$\gamma_{MS^+}\circ G'(x;y_1,\dots,y_k)$ along the bottom composition is the element of 
$MS^+(m_1+\dots+m_k)$
given by the following composition: 
$$S^1\xrightarrow{g^{-1}}S^1\xrightarrow{c_x} (S^1)^k \xrightarrow{c_{y_1}\times \dots\times  c_{y_k}} (S^1)^{m_1+\dots+m_k}$$
since we consider $MS^+(m_1+\dots+m_k)$ as a subspace of $\CoEnd(m_1+\dots+m_k)$ and use the composition in 
 \eqref{eq: MS composition}.
The $j$th factor $S^1$ in the above $(S^1)^k$ is subdivided into submanifolds $I_s(y_j)$ according to $c_{y_j}$.  

Their inverse image 
$g\circ (c_x)_j^{-1}(I_s(y_j))$ in the source $S^1$ of the composition is thus taken to the $(m_1+\dots+m_{j-1}+s)$th factor $S^1$ in $(S^1)^{m_1+\dots+m_k}$, being first scaled by a factor $\frac{\sum m_i}{km_j}$ (using $g^{-1}$), then by a factor $k$ (via the $j$th component of $c_x$) and finally by a factor $m_j$ (via the $s$th component of $c_{y_j}$). So in total the composition takes $g\circ (c_x)_j^{-1}(I_s(y_j))$ to $S^1=I/\del I$ linearly by a factor $\sum m_i$, and is constant on the connected components of the complement of $g\circ (c_x)_j^{-1}(I_s(y_j))$. In particular, $g\circ (c_x)_j^{-1}(I_s(y_j))$ has length $\frac{1}{\sum m_i}$, which is independent of  $j$ and $s$. Thus we see that the resulting element does indeed live in the image of $\cact^1$.
As the scaling is always independent of $s$ and $j$, the proportion of each $g\circ (c_x)_j^{-1}(I_s(y_j))$ inside the source $S^1$
is always as dictated by $c_{y_j}$, with each $g^{-1}(I_j(x))$ having total length $\frac{m_j}{\sum_i m_i}$. Hence the composition is the same as following the other side of the square, which inserts $I_s(y_j)$ inside $I_j(x)$, scaling each $I_j(x)$ to length $m_j$, then scales it by $\frac{1}{\sum m_i}$ to be inside $MS^+$. 

\end{proof}

Therefore up to scaling in accordance with the homeomorphism $\cact^1\cong \F$ in Lemma~\ref{lem: cact1 homeo to F}, we have shown that both $\cact^1$ and its composition are contained within the operad $MS^+$, but not as a suboperad.

\subsection{\texorpdfstring{$\cact^1$}{Lg} is a \texorpdfstring{$B\OO$}{Lg}--algebra} \label{sec: Cact1 is BO}
Here we will construct an action of $B\OO$ on normalized cacti using the fact that $\cact^1\subset MS^+$, and that its composition can also be described in terms of the composition in $MS^+$. 
In Theorem~\ref{thm: cact is BO alg}, we show that the quasi-operad structure on normalized cacti $\cact^1=\{\cact^1(k)\}_{k\geq 0}$ is part of a $B\mathcal{O}$-algebra structure. 
In Corollary~\ref{cor: cact is dendroidal segal space}, we conclude that $\cact^1$ determines a dendroidal Segal space $X\in\mathcal{S}^{\om^{op}}$ with $X(C_v)=\cact^1(|v|)$.

\medskip

Recall from Section~\ref{sec: BO-alg} that a $B\OO$--algebra is a symmetric sequence with $\circ_i$--operations that are homotopy associative up to all higher homotopies. Elements from $B\OO$ are $(T, \sigma, \tau, B, t)$ where $T$ is a planar tree equipped with bijections $\sigma:|V(T)|\rightarrow V(T)$ and $\tau:|L(T)|\rightarrow L(T)$, and $(B, t)$ is a weighted bracketing of $T$.

Let 
\begin{equation}\label{eq: reparameterization map R}
    \mathcal{R}\colon MS^{+}\longrightarrow \F
\end{equation}
denote the projection map 
that forgets the $Mon^+(I,\del I)$ component, $\mathcal{R}(x,f)=x$. 
This is a map of symmetric sequences. 
If we think of elements of $MS^+$ as cacti, the map $\mathcal{R}$ has the effect of {\em renormalizing}, that is, rescaling the lobes so that they all have the same length.
Since $MS^+$ is an operad, it is an $\OO$-algebra. The $\OO$-action 
$$\lambda_{MS^+}: \OO(k;m_1,\dots,m_k)\times MS^+(m_1)\times\ldots\times MS^+(m_k)\longrightarrow MS^+(\sum_i m_i)$$
takes a sequence of elements 
\[((T,\sigma,\tau),(x_1,f_1),\ldots,(x_k,f_k))\in \OO(k;m_1,\dots,m_k)\times MS^+(m_1)\times\ldots\times MS^+(m_k)\] 
to 
the composition of the elements $(x_1,f_1),\ldots,(x_k,f_k)$ according to $\gamma_{MS^+}$, in the order prescribed by the labeled tree $(T,\sigma)$, acting by the permutation $\tau$ on the resulting element of $MS^+(\sum_i m_i)$. This composition can be depicted by labeling the $i$th vertex of $(T,\sigma,\tau)$ by $(x_i,f_i) \in MS^+(m_i)$. 
This action is compatible with the composition in $\OO$ because $MS^+$ is an operad. 
We will use this existing $\OO$-algebra structure to define the $B\OO$-algebra structure of $\cact^1$ by representing the $\cact^1$-composition by  $\mathcal{R}\circ \lambda_{MS^+}$.

\begin{theorem}\label{thm: cact is BO alg}
The $\cact^1$-composition \eqref{eq: cact^1 composition} 
is part of a $B\OO$--algebra structure. 
\end{theorem}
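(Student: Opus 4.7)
The plan is to define action maps
$$\alpha: B\OO(n; m_1, \ldots, m_k) \times \cact^1(m_1) \times \cdots \times \cact^1(m_k) \longrightarrow \cact^1(n)$$
by lifting cacti to $MS^+$, composing there, and then renormalizing back to $\cact^1$ via $\mathcal{R}$, with the weighted bracketing in an element of $B\OO$ prescribing how the lift depends continuously on the input. On the element $(T,\sigma,\tau,\emptyset,\emptyset) \in B\OO$ with trivial bracketing, the action should recover the $\cact^1$-composition $\gamma_{\cact^1}$ associated to $(T,\sigma,\tau)$ via iterated application of Lemma~\ref{lem:rescaling}. Each nontrivial bracket $S \subseteq T$ in a bracketing $B$ singles out a subtree whose composition can instead be performed as a standalone $\cact^1$-composition, with the result then inserted into the rest of $T$; the weight of $S$ interpolates continuously between this behaviour (weight $1$, bracket fully present) and the behaviour of composing everything at the next larger level in $MS^+$ without renormalizing (weight $0$, bracket discarded).

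More concretely, for each bracket $S$ with weight $t_S$ I would associate a scaling map in $Mon^+(I,\partial I)$ that depends continuously on $t_S$ and on the cacti at the vertices of $S$. At $t_S = 1$ the scaling is the map $g^{-1}$ of~\eqref{def: repramaterization maps g} required to turn the $MS^+$-composition of the contents of $S$ into the $\gamma_{\cact^1}$-composition, so that the result is the normalized cactus that can be passed up to the next level. At $t_S = 0$ the scaling is the identity, and the composition inside $S$ happens as part of the outer $MS^+$-composition. For intermediate $t_S$ I would take the straight-line path between these two maps in the convex space $Mon^+(I,\partial I)$. Performing all these nested scaled compositions in $MS^+$ and then applying $\mathcal{R}$ once at the very end produces a continuous map from $|\B(T)|$ to $\cact^1(n)$; assembling over all trees in $\OO$ defines $\alpha$.

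The next step is to verify three axioms: equivariance, unitality, and associativity with respect to $B\OO$-composition. Equivariance and unitality are essentially automatic, since the symmetric group action on $B\OO$ only permutes leaf and vertex labels and the unit is the trivially bracketed corolla $C_n$; both reduce to the corresponding properties of $MS^+$ combined with Lemma~\ref{lem:rescaling}. The main obstacle is associativity, that is, compatibility with the $B\OO$-composition rule~\eqref{eq: Bracket comp} for nested substitution of bracketed trees. The target bracketing there combines three ingredients: the inherited bracketing $\tilde B$ from the outer tree, the bracketing $B'$ from the substituted tree, and the possible new middle bracket $T'$ which appears exactly when $T'$ is large, together with the discarding of a bracket that shrinks to a corolla when an $\eta$ is inserted into a two-vertex bracket. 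Verifying that $\alpha$ respects all of these cases is a bookkeeping argument on scaling maps: the scaling associated to the new middle bracket $T'$, composed with the $MS^+$-composition inside $T'$, reproduces precisely the renormalization that would otherwise be performed by $\mathcal{R}$ at the next level, and the discarded-bracket case corresponds to an internal scaling collapsing to the identity. The strict associativity of $MS^+$, the naturality of $\mathcal{R}$, and the straight-line interpolation in $Mon^+(I,\partial I)$ then combine to give the required continuous compatibility for general weighted bracketings.
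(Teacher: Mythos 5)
Your proposal follows essentially the same route as the paper's proof: lift normalized cacti to $MS^+$, attach rescaling maps in $Mon^+(I,\partial I)$ to the vertices and to each bracket (with the bracket map at weight $1$ renormalizing the bracket's internal composition so it can be inserted at the next level, and degenerating to the identity at weight $0$), extend over $|\B(T)|$ by linear interpolation using convexity of $Mon^+(I,\partial I)$, apply $\mathcal{R}$ once at the end, and check compatibility with the $B\OO$-composition by tracking how the new middle bracket's scaling map reproduces the intermediate renormalization. The paper makes this precise via the explicit function $\xi$ of \eqref{eq: xi} and the maps $g_i$, $h_j$ of \eqref{eq: g_i definition in BO action}--\eqref{eq: f_i definition in BO action}, but your sketch contains the same ideas and the same key inputs (Lemma~\ref{lem:rescaling} and the suboperad $MS^+\subset\CoEnd(S^1)$).
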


\begin{proof}
In order to construct a $B\OO$--algebra structure on the sequence $\{\cact^1(n)\}_{n\ge 0}$, we want to define a map
$$B\OO(k;m_1,\dots,m_k)\times \cact^1(m_1)\times\ldots\times \cact^1(m_k)\longrightarrow \cact^1(\sum_i m_i)$$
that restricts to the $\Sigma_n$--action on $\cact^1(n)$, which permutes the labels on the lobes,  and its already defined $\circ_i$--compositions. Using the homeomorphism $\cact^1\cong \F$ from Lemma~\ref{lem: cact1 homeo to F}, we will equivalently construct a map
$$\lambda: B\OO(k;m_1,\dots,m_k)\times \F(m_1)\times\ldots\times \F(m_k)\longrightarrow \F(\sum_i m_i).$$

Firstly, the $\Sigma_n$-action on the $n$--space of a $B\OO$--algebra is encoded  by the labeled corollas 
$$(C_n,1,\tau,\emptyset,\emptyset)\in B\OO(n;n)\cong \OO(n;n)\cong \Sigma_n,$$ 
where $\tau$ labels the leaves of the corollas $C_n$, which are thought of as elements of the symmetric group $\Sigma_n$, and the identity corresponds to the planar ordering. This fixes the action of such elements of $B\OO$ as we have already fixed the $\Sigma_n$--action on $\cact^1(n)$.

The $\cact^1$ $\circ_i$-composition is encoded in $B\OO$ by the trees with exactly two vertices, one attached to the $i$th incoming edge of the other. These trees admit no non-trivial bracketings so such elements of $B\OO$ have the form 
$$(T,\sigma,\tau,\emptyset,\emptyset)\in B\OO(m+n-1;m,n)$$ 
where $\sigma$ labels the two vertices of $T$ and $\tau$ labels its $n+m-1$ leaves. 
The compatibility with the pre-chosen operadic composition of $\cact^1$ dictates the action of such elements of $B\OO$: $(T,\sigma,\tau,\emptyset,\emptyset)$ acts on $x_1\in \cact^1(m)$ and $x_2\in \cact^1(n)$ by taking their $\circ_i$-composition, as dictated by the tree, and then acting by $\tau$ on the lobes of the resulting element of $\cact^1(m+n-1)$. 
Figure~\ref{fig: BO action on cact1 composition} illustrates an example of this action.
\begin{figure}[ht]
    \centering\def\svgwidth{0.85\columnwidth}
    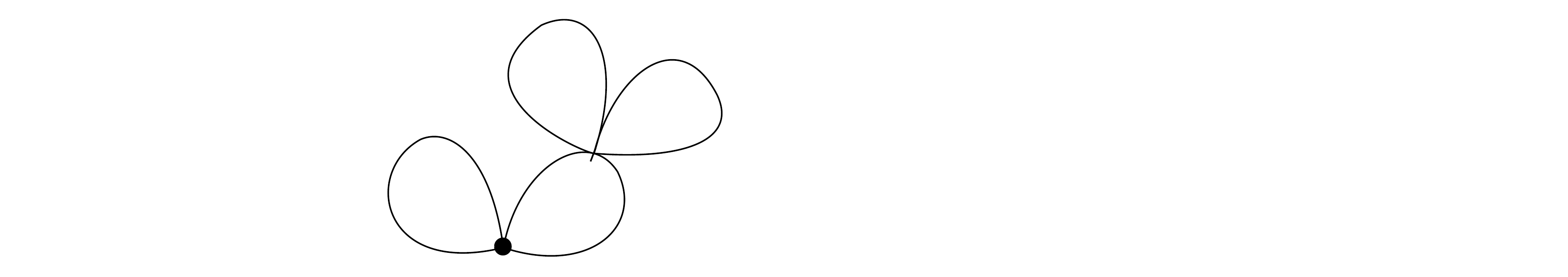
    \caption{Example of the $B\OO$-action on $\cact^1$.}
    \label{fig: BO action on cact1 composition}
\end{figure}

By Lemma~\ref{lem:rescaling}, this $\cact^1$-composition can be defined in terms of the $MS^+$ composition:
$$\mathcal{R}\circ \lambda_{MS^+}\big((T,\sigma,\tau);(x_1,g_1),(x_2,g_2)\big)$$
where $\mathcal{R}$ is the projection map \eqref{eq: reparameterization map R}, and $g_1=g(x_1,1,\dots,k_i, \dots, 1)$ and $g_2=g(x_2,1,\dots,l_j, \dots, 1)$ are the rescaling maps of \eqref{def: repramaterization maps g}, with $k_i=n$ and $l_j=1$ if  first vertex is the bottom vertex and the second is attached to its $i$th input, or $k_i=1$ and $l_j=m$ if the second vertex is the bottom vertex with the first attached to its $j$th input. 
Let $(y_T, f_T)\in MS^+$ denote the element $\lambda_{MS^+}\big((T,\sigma,\tau);(x_1,g_1),(x_2,g_2)\big)$.
\medskip

We will now extend this definition of the $B\OO$--action of trees with at most two vertices to an action of the whole operad. We start by defining an explicit expression for the action of bracketings of trees $(T,\sigma,\tau,B,\underline 1)$ with brackets of weight $1$, and afterwards extend this definition to the remaining elements of $B\OO$, whose brackets have weight strictly between 0 and 1.  

Let ${\bf T}=(T,\sigma,\tau, B,\underline 1)$ be an element of $B\OO(n; m_1,\ldots,m_k)$ with all brackets of weight 1, and 
let $x_i\in \F(m_i)\cong \cact^1(m_i)$ for each $1\leq i \leq k$. 
We first construct scaling maps $g\in Mon^+(I,\partial I)$ as in \eqref{def: repramaterization maps g}.
Recall from Definition~\ref{def:bracketing} that a bracketing $B=\{S_j\}_{j\in J}$ consists of large, nested proper subtrees of $T$. Here we allow $B$ to be empty. 
Recall that $\sigma$ orders the vertices of $T$. 
For a fixed $i\in \{1, \ldots, k\}$, let $S\in B$ be the smallest bracket that contains the vertex $\sigma(i)$, allowing $S=T$ if there are no such bracket. 
Recall that $in(\sigma(i))$ is the set of incoming edges of $\sigma(i)$, and $L(S)$ is the set of leaves of the bracket $S$. 
We define a map 
\begin{equation}\label{eq: xi}\xi:in(\sigma(i))\longrightarrow \mathbb{N}\end{equation}
by setting 
\begin{enumerate}
\item[(i)]  $\xi(e)=1$ if $e\in L(S)$;
\item[(ii)]  $\xi(e)=|L(S')|$ if $e$ is the root of a bracket $S'\subset S$ in $B$, with $S'\subset S$ the largest such bracket; 
\item[(iii)] $\xi(e)=|w|$ if $e\in \ie(S)$ is not the root of any $S'\in B$, where $|w|$ denotes the arity of the vertex $w\in V(S)$ for which $e$ is the outgoing edge. 
\end{enumerate}
\begin{figure}[ht]
\centering
\begin{minipage}{0.6\textwidth}
    \centering\def\svgwidth{0.7\columnwidth}
    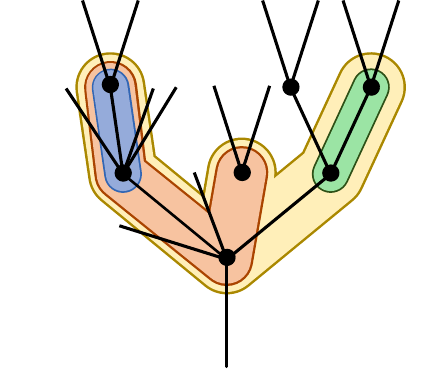
\end{minipage}
\begin{minipage}{0.3\textwidth}
    \begin{align*}
      \xi:in&(\sigma(1)) \to\mathbb{N}\\
      &     \\
      \xi(e_1) &= 1     \\
      \xi(e_2) &= |L(S^1)|=5      \\
      \xi(e_3) &=1      \\    
      \xi(e_4) &= |\sigma(7)|=2      \\
      \xi(e_5) &=1      
  \end{align*}
\end{minipage}
\caption{An example of $\xi$.}\label{fig:example of xi}
\end{figure}

Figure~\ref{fig:example of xi} shows an example of the map $\xi$.
We then set 
\begin{equation}\label{eq: g_i definition in BO action}
    g_i:=g(x_i;\xi(e_1),\dots,\xi(e_{m_i}))
\end{equation} 
for $e_1,\dots, e_m$ the incoming edges of $\sigma(i)$ ordered by the planar ordering of $T$. 

\medskip

We define the action of $B\OO$ inductively on the size of the bracketing $B$. 

If $B$ is empty, then we define 
$$\lambda({\bf T};x_1,\dots,x_k):=\mathcal{R}\circ \lambda_{MS^+}((T,\sigma,\tau);(x_1,g_1),\dots,(x_k,g_k))$$
and use $(y_T, f_T)\in MS^+$ to denote the image of $\lambda_{MS^+}$.
Note that when $k=1$ or $2$, this is the same as the $B\OO$-structure already defined above. 

If $B$ is not empty, then we define additional scaling maps for each bracket, using the inductive hypothesis that the action has already been defined action on subtrees with fewer brackets. 

Let $T'$ be the tree obtained from $T$ by adding a binary vertex at the root of each bracket $S_j\in B$. Extend the order $\sigma$ of the vertices of $T$ to an order $\sigma'$ of vertices of $T'$ by setting the $|J|=|B|$ new vertices last.  
An example of $T'$ is shown in Figure~\ref{fig:example of T'}.
We will use each additional vertex of $T'$ to assign a scaling map to the associated bracket.

\begin{figure}[ht]
    \centering\def\svgwidth{0.4\columnwidth}
    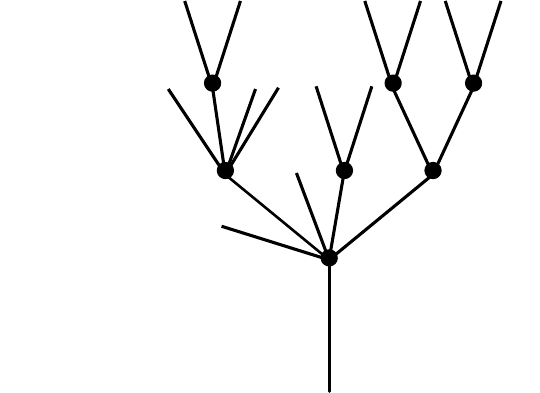
    \caption{An example of $T'$ for the bracketing of $T$ in Figure~\ref{fig:example of xi}}
    \label{fig:example of T'}
\end{figure}

Let~$w_j\in V(T')\setminus V(T)$ be the $j$th vertex of $T'$ not in $T$, according to the chosen order $\sigma'$.
Let $S_j\in B$ be the bracket associated to $w_j$. Since the number of brackets of $B$ that lie inside $S_j$ is less than $|B|$, we have an element $$(y_{S_j}, f_{S_j})\in MS^+$$ defined by the inductive assumption by restricting ${\bf T}=(T,\sigma,\tau, B,\underline 1)$ to the subtree $S_j$. 
Consider the tree $T/S_j$ in which  all vertices in $S_j$ are identified and internal edges between them are collapsed. The tree $T/S_j$ has a vertex $[S_j]$ associated to the collapsed tree $S_j$. 
We have an induced bracketing $\tilde B$ of $T/S_j$ from the bracketing $B$ of $T$, and thus can define a map $\xi_j:in([S_j])\to \mathbb{N}$ as in $\eqref{eq: xi}$ by replacing $(T,B)$ by $(T/S_j,\tilde B)$.
Then define 
\begin{equation}\label{eq: f_i definition in BO action}
    h_j:=g(y_{S_j};\xi_j(e_1),\dots,\xi_j(e_l))\circ f_{S_j}^{-1}
\end{equation}
for $e_1,\dots,e_l$ the incoming edges of $[S_j]$ in $T/S_j$.
We define the action of $B\OO$ by setting
\begin{equation}\label{equ:BOaction}
\lambda({\bf T};x_1,\dots,x_k):= 
\mathcal{R}\circ \lambda_{MS^+}\big((T',\sigma',\tau);(x_1,g_1),\ldots,(x_k,g_k),(1,h_1),\dots,(1,h_{|B|})\big)
\end{equation}
for the rescaling maps $g_i$ and $h_j$ defined above.

We claim that the formula for the action (\ref{equ:BOaction}) is indeed compatible with composition of bracketings of trees of weight 1. 
It is enough to check this for a $\circ_i$-composition in $B\OO$, so consider ${\bf T_1}=(T_1,\sigma_1,\tau_1, B_1,\underline 1)$ and ${\bf T_2}=(T_2,\sigma_2,\tau_2, B_2,\underline 1)$ in $B\OO$. 
We need to check that 
\begin{equation}\label{equ:comp}
\lambda({\bf T_1};x_1,\dots,x_{i-1},\lambda({\bf T_2};x_i,\dots,x_{i+l-1}), x_{i+1}, \dots,x_{k+l-1})
= \lambda({\bf T_1}\circ_i {\bf T_2};x_1,\dots,x_{k+l-1}).
\end{equation}
From the above definition, we have 
    $\lambda({\bf T_1}; x_1,\dots,x_{i-1},\lambda({\bf T_2};x_i,\dots,x_{i+l-1}), x_{i+1}, \dots,x_{k+l-1})=$
\begin{multline*} \mathcal{R}\circ \lambda_{MS^+}\big((T_1',\sigma_1',\tau_1);(x_1,g_1),\ldots,(x_{i-1},g_{i-1}), \\
 (y_{T_2},g_i),(x_{i+l},g_{i+l}), \dots,(x_k,g_k),(1,h_1),\dots,(1,h_{|B_1|})\big)\end{multline*}
for 
$$y_{T_2}=\mathcal{R}\circ \lambda_{MS^+}\big((T'_2,\sigma'_2,\tau_2);(x_i,g'_1),\ldots,(x_{i+l-1},g'_{l}),(1,h'_1),\dots,(1,h'_{|B_2|})\big)$$
where the maps $g_i$ and $h_i$ are those associated to $(T_1,B_1)$ and the maps $g'_i$ and $h'_i$ associated to $(T_2,B_2)$. 
In the above notation, we also have 
$$(y_{T_2},f_{T_2})=\lambda_{MS^+}\big((T'_2,\sigma'_2,\tau_2);(x_i,g'_i),\ldots,(x_{i+l-1},g'_{i+l-1}),(1,h'_1),\dots,(1,h'_{|B_2|})\big).$$

Note that one can change the $Mon^+(I,\del I)$ component of an element of $MS^+$ by doing a $\circ_1$--composition in the operad. In particular, 
$$(y_{T_2},g_i)=(1,g_i\circ f_{T_2}^{-1})\circ_1 (y_{T_2},f_{T_2})$$ in $MS^+$ ane we can rewrite the left hand side of (\ref{equ:comp}) as the first component of the $MS^+$--composition
\begin{align*}
 \lambda_{MS^+}&\big((T_1',\sigma_1',\tau_1)\circ_i (\bar T_2',\bar \sigma_2',\tau_2);(x_1,g_1),\ldots,(x_{i-1},g_{i-1}), \\
& (x_i,g'_i),\ldots,(x_{i+l-1},g'_{i+l-1}),(1,h'_1),\dots,(1,h'_{|B_2|}),(1,g_i\circ f_{T_2}^{-1}) ,\\
& (x_{i+l},g_{i+l}), \dots,(x_k,g_k),(1,h_1),\dots,(1,h_{|B_1|})\big)
\end{align*}
where $\bar T_2'$ has an extra vertex at the bottom of the tree to encode the change of $Mon^+(I,\del I)$--component for the $T_2$ composition. 
If $T_2$ is large, this extra vertex corresponds exactly to the extra bracket  $T_2$ arising in the $B\OO$--composition, and one checks that the corresponding scaling map $h$ defined by the formula (\ref{eq: f_i definition in BO action}) is precisely the map $g_i\circ f_{T_2}^{-1}$. As the other labels of the vertices of the composed tree agree with those of the right hand side, we see that we recover the right hand side of \eqref{equ:comp}. 
If $T_2$ is not large, then there is no such additional bracket in the $B\OO$--composition, but in this case $f_{T_2}=\id$ and the left and right hand side agree directly.

\smallskip

Recall from Remark~\ref{rem:BO as poset operad} 
that we may consider $B\OO$ as the geometric realization of the simplicial operad of bracket trees. Then the above definition of $\lambda$ on bracketings of weight $1$ defines the action of the vertices of $B\OO$. 
We finally extend this action to all bracketings of a tree $T$
by linear interpolation on the rescaling maps $g_i$.
For a fixed tree $T$ and a point $((B_0\subset\dots\subset B_r),\underline t)$  in the realization of the poset $\mathcal{B}(T)$, 
let $g_i(T,B_j)$ denote the definition of the rescaling map $g_i$ with respect to the bracketing $B_j$ on $T$ in \eqref{eq: g_i definition in BO action}, and likewise for the maps $f_j$ in \eqref{eq: f_i definition in BO action}.
We set
\[g_i=t_0g_i(T,B_0)+...+t_rg_i(T,B_r).\] This is well-defined as $Mon^+(I,\del I)$ is convex. 
Also note that this is continuous in $B\OO$ as going to the $l$th face of the simplex $(B_0\subset\dots\subset B_r)$ corresponds to $t_l$ going to 0, that is, dropping the bracket $B_{l}$.
Then we define $(y_T, f_T)$ and $\lambda(T, \sigma, \tau, B, t):=\mathcal{R}(y_T, f_T)$ as in \eqref{equ:BOaction} but with this definition of $g_i$ instead.

\smallskip

This defines the action of $B\OO$ on $\cact^1$. It is compatible under composition because the composition in $B\OO$ is the realization of the composition in the poset operad, and we have already checked the compatibility under composition there.  

\end{proof}

\medskip

Given that normalized cacti, together with the cactus composition \eqref{eq: cact^1 composition}, forms a $B\OO$-algebra we can now use the rectification results from Proposition~\ref{prop:homotopy Segal implies Segal} to define an $\infty$-operad. 

\begin{cor}\label{cor: cact is dendroidal segal space}
Normalized cacti define dendroidal spaces of the following two flavors: 
\begin{enumerate} 
\item[(i)] There exists a reduced homotopy dendroidal space $X\in\mathcal{S}^{\widetilde\om_0^{op}}$, satisfying the strict Segal condition, such that $X(C_n)=\cact^1(n)$ and with value on the inner face maps $\del_e$ given by the $\cact^1$--composition. 

\item[(ii)] There exists reduced dendroidal space $Y\in\mathcal{S}^{\om^{op}}$, satisfying the weak Segal condition,  such that $Y(C_n)\simeq\cact^1(n)$ and with value on the inner face maps $\del_e$ homotopic to the $\cact^1$--composition.
\end{enumerate}
\end{cor}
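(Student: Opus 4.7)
The proof is a direct application of the machinery built in the previous sections. The plan is to combine Theorem~\ref{thm: cact is BO alg}, Theorem~\ref{thm:W_0O-Omega_0}, and Proposition~\ref{prop:homotopy Segal implies Segal} in the obvious way.

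For part (i), I would start by invoking Theorem~\ref{thm: cact is BO alg} to view $\cact^1 = \{\cact^1(n)\}_{n\ge 0}$ as a $B\OO$-algebra $\pp$. Applying the functor $\Phi$ of Theorem~\ref{thm:W_0O-Omega_0} produces the desired homotopy dendroidal space $X := \Phi(\pp) \in \mathcal{S}^{\widetilde\om_0^{op}}$. By construction in the proof of that theorem, $X(\eta) = *$ (so $X$ is strictly reduced, hence in particular reduced), $X(C_n) = \pp(n) = \cact^1(n)$, and $X$ satisfies the strict Segal condition. It remains to identify the value of $X$ on inner face maps with the $\cact^1$-composition: an inner face map $\partial_e : T/e \to T$ in $\om^{op}$ (with $T$ having exactly two vertices joined by $e$) corresponds to the unique trivial bracketing on the associated element of $B\OO$, so $X(\partial_e)$ is precisely the chosen $\circ_i$-composition used as the input in Theorem~\ref{thm: cact is BO alg}.

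For part (ii), I would take $X$ from (i), pass to a fibrant replacement $X_f$ in the projective model structure on $\mathcal{S}^{\widetilde\om_0^{op}}$ (which remains weakly equivalent to $X$ and hence still reduced and satisfying the weak Segal condition), and then set
\[
Y := (p_* X_f)_f \in \mathcal{S}^{\om^{op}},
\]
where $p_*$ is the homotopy left Kan extension along $p : \widetilde\om_0 \to \om$ as in Section~\ref{WOdendroidal}, and $(-)_f$ denotes a fibrant replacement in the projective model structure on $\mathcal{S}^{\om^{op}}$. By Proposition~\ref{prop:homotopy Segal implies Segal}, $Y$ is a reduced dendroidal space satisfying the weak Segal condition. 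By Lemma~\ref{the-natural-transformations}, there is a natural zig-zag of weak equivalences between $p^* Y$ and $X_f$; in particular $Y(C_n) \simeq X(C_n) = \cact^1(n)$, and the value of $Y$ on any inner face map $\partial_e$ fits into a homotopy commutative square with the corresponding map of $X$, so $Y(\partial_e)$ is homotopic (under these equivalences) to the $\cact^1$-composition.

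There is essentially no obstacle at this stage: all the hard work is packaged in the three cited results, and this corollary is their intended combined consequence. The only mild care needed is the bookkeeping to check that the equivalences $Y(C_n) \simeq \cact^1(n)$ really intertwine $Y(\partial_e)$ with the $\cact^1$-composition up to homotopy, which is immediate from the naturality statement of Lemma~\ref{the-natural-transformations} applied to the inclusion of the subcategory of $\widetilde\om_0$ generated by the edges and corollas together with the inner face maps of interest.
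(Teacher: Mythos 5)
Your proposal is correct and follows essentially the same route as the paper: part (i) is Theorem~\ref{thm: cact is BO alg} followed by $\Phi$ from Theorem~\ref{thm:W_0O-Omega_0}, and part (ii) is the rectification $Y=(p_*X)_f$ via Lemma~\ref{the-natural-transformations} and Proposition~\ref{prop:homotopy Segal implies Segal}. The only cosmetic difference is that you take a fibrant replacement of $X$ before applying $p_*$, whereas the paper notes $X$ is already projectively fibrant because it is entrywise fibrant in topological spaces; either way the argument goes through.
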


\begin{proof}
Theorem~\ref{thm: cact is BO alg} shows that $\cact^1$ is a $B\OO$-algebra. Applying the construction from Theorem~\ref{thm:W_0O-Omega_0}, we define a homotopy dendroidal space $X:=\Phi(\cact^1)\in\mathcal{S}^{\widetilde\om_0^{op}}$. By construction, 
$\Phi(\cact^1)(C_n)=\cact^1(n)$, and by the theorem it is a reduced homotopy dendroidal space satisfying the strict Segal condition. The evaluation of $\Phi(\cact^1)$ on an inner edge is the $\circ_i$ composition, as encoded by the $B\OO$-structure, which in the present case is the $\cact^1$--composition by Theorem~\ref{thm: cact is BO alg}. This proves (i) in the statement.

For (ii), we set $Y:=(p_*X)_f= (p_*\Phi(\cact^1))_f\in \mathcal{S}^{\om^{op}}$ to be the rectification of $X$, as constructed in Proposition~\ref{prop:homotopy Segal implies Segal}.  By Lemma~\ref{the-natural-transformations}, $Y(C_w)=(p_*\Phi(\cact^1))_f(C_w)\simeq \Phi(\cact^1)(C_w)=\cact^1(|w|)$ and the value of $Y$ on inner face maps is identifies under these homotopy equivalences with the value of $X$ on inner face maps, and hence identifies with the $\cact^1$--composition. The $\widetilde\om_0$-diagram $X$ takes values in the category of topological spaces and is therefore fibrant as it is entrywise fibrant. We can now apply Proposition~\ref{prop:homotopy Segal implies Segal},
to get that $Y$ is reduced and satisfies the weak Segal condition.  
\end{proof}

\appendix
\section{Relation between the operads $B\OO$ and $W\OO$}\label{sec: BO and WO}

The Boardman-Vogt $W$-construction is a construction on operads with the property that, for any topological operad $\pp$, algebras over $W\pp$ are ``up-to-homotopy'' or ``weak'' $\pp$-algebras. A lax operad \cite{BrinkmeierThesis} is an algebra over the operad $W\mathcal{O}$, the Boardman-Vogt $W$--construction applied to the operad of operads $\mathcal{O}$ (Definition~\ref{def:operad of operads}), and is a notion of a ``weak'' or ``infinity'' operad. 
It is known that there exists a zig-zag of Quillen equivalences between the category of $W\mathcal{O}$-algebras and the category of reduced dendroidal spaces by, for example, combining Theorem 4.1 of \cite{bm_resolution} with either Theorem 1.1 of \cite{Bergner_Hackney_14} or a restriction of Theorem 8.15 of \cite{cm3}. 

Here we show how the operad $B\OO$ can be identified with a variant $W_0$ of the $W$-construction of the operad $\OO$ of operads (see Theorem~\ref{thm:BOWO}).  From this, it will follow that $B\OO$--algebras are lax operads that are strictly symmetric and with a strict identity (see Example~\ref{ex:W0O}). We start by recalling the $W$--construction.

\subsection{The $W$-Construction}\label{sec: W-construction}
The Boardman-Vogt $W$-construction is an enlargement of the free operad construction.  Given an operad $\pp$, there are canonical morphisms of topological operads
\[ F\pp \hookrightarrow W\pp \xrightarrow{\sim} \pp, \] where the map $p: W\pp \to \pp$ is a surjective homotopy equivalence. 
Algebras for $W\mathcal{P}$ are up-to-homotopy $\mathcal{P}$-algebras. We briefly recall the construction here and refer the reader to \cite[Section 17]{BrinkmeierThesis} or \cite[Section 3]{bm_resolution} for full details. 

\begin{definition}\label{def: W construction}

Let $\pp$ be a  $\mathfrak{C}$--colored (discrete or topological) 
operad. The operad $W\pp$ is a topological operad with the same set of colors $\mathfrak{C}$, built from the free operad $F(\pp)$ (Definition~\ref{def: free operad}) by adding length in $[0,1]$ to the internal edges of the trees that define the elements of $F(\pp)$. More precisely, for each list of colors $c;c_1,\dots,c_k$ in $\mathfrak{C}$, we have  
$$W\mathcal{P}(c;c_1,\dots,c_k)= \Big(\coprod_{(\mathbb{T},f,\lambda)} \big( [0,1]^{|\ie(\mathbb{T})|} \times \prod_{v\in V(\mathbb{T})} \pp(out(v);in(v))\big)\Big)/\sim$$ 
where the disjoint union, as for the free operad, runs over the isomorphim classes of leaf-labeled $\mathfrak{C}$--colored planar trees 
$$(\mathbb{T},f\colon E(\mathbb{T})\to \mathfrak{C},\lambda\colon \{1,\dots,k\}\to L(\mathbb{T}))$$ 
with $k$ leaves such that $f(\lambda(i))=c_i$, $f(R(\mathbb{T}))=c$. The equivalence relation is generated by the relation ($\ast$) in Definition~\ref{free_operad_relation} in addition to the following additional relations that capture ``weak'' operadic composition and units:

\begin{enumerate}
    \item any tree with an internal edge of length of zero is identified with the tree where that edge has been collapsed and the operations labelling its end vertices composed; 
    \item any tree that has a vertex with only one input and one output, both colored by $c\in\mathfrak{C}$, labeled by the identity in $\iota_c \in \pp(c;c)$, is identified with the tree where that vertex is deleted. The resulting new edge, if internal, has length the maximum length of the two original internal edges connected to the deleted vertex.
\end{enumerate}
See \cite[p 75]{BrinkmeierThesis} for a pictorial version of these relations.
The symmetric group acts on $W\mathcal{P}$ by relabeling the leaves, as for the free operad.
Composition is by grafting, giving length 1 to the newly created internal edge. 

We will denote elements of $W\mathcal{P}$ by $(\mathbb{T},f,\lambda,s,p)$, where $\mathbb{T}$ is a planar tree, $f:E(\mathbb{T})\to \mathfrak{C}$ is the map coloring its edges, $\lambda:\{1,\dots,k\}\to L(\mathbb{T})$ is the bijection labeling its leaves, $s\in [0,1]^{|\ie(\mathbb{T})|}$ is a collection of weights, and $p=(p_{v})_{v\in V(\mathbb{T})}$ is a labeling of the vertices by operations in $\pp$.
An example is shown in Figure~\ref{fig:coloured-w-construction}. There is a canonical projection map $\pi:W\pp\rightarrow\pp$ defined by sending all the edge lengths to $0$ and composing the operations of $\pp$ as dictated by the trees. 

\begin{figure}[h!t]
    \centering\def\svgwidth{0.45\columnwidth}
    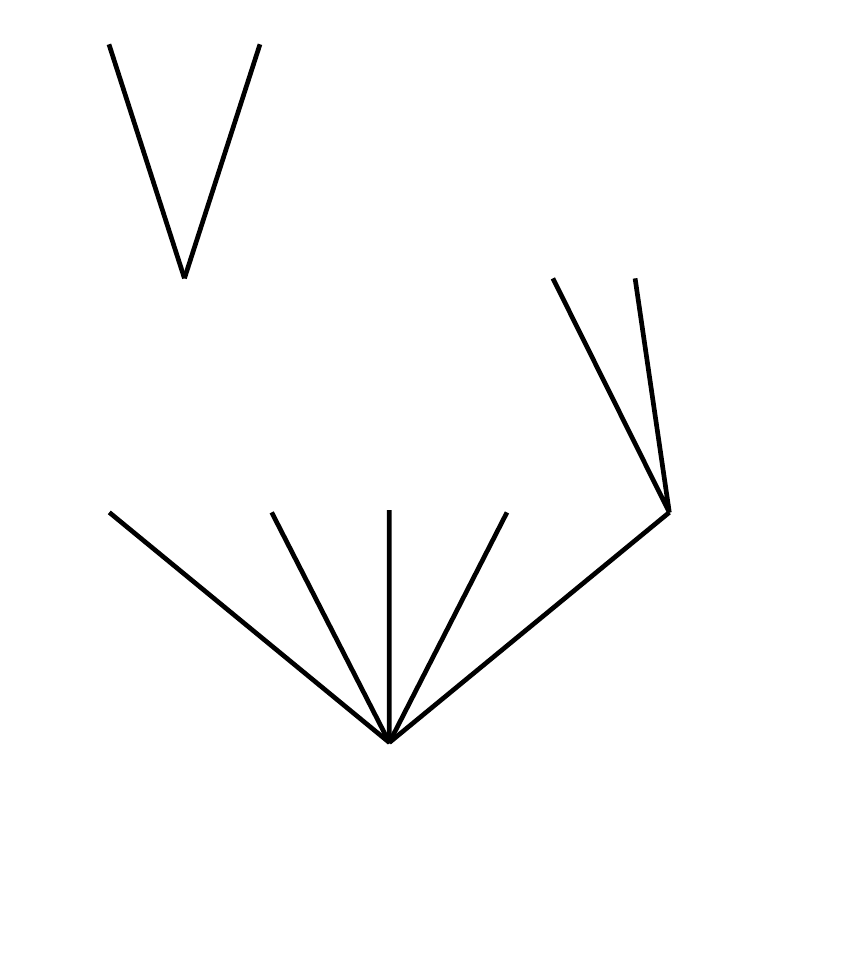
    \caption{Example of an element of ${W\protect\mathcal{P}(p;a,b,c,d,g,i,k,\ell,m,o)}$.}\label{fig:coloured-w-construction}
\end{figure}
\end{definition}

\subsection{A variant on the $W$-construction}\label{sec: W construction}

Given a (discrete or topological) $\mathfrak{C}$-coloured operad $\pp$, the topological operad $W_0\pp$ is defined as the quotient of $W\pp$ by replacing relation (2) in Definition~\ref{def: W construction} by the following stronger relations for arity one vertices, as well as a version for arity zero vertices: 
\begin{enumerate}
    \item[($2'$)] any tree that has a vertex $v$ with only one input and one output both colored by $c$, adjacent to at least one other vertex $w$, with $v$ labeled by {\em any} element $\pp(c;c)$, is identified with the tree where the vertex $v$ is deleted, and the label of $v$ and $w$ are composed in $\pp$ (Figure~\ref{fig:w_0-2'}).

    If the resulting new edge is internal, then its length is the maximum length of the two original (then necessarily internal) edges adjacent to $v$.

    \begin{figure}[ht]
    \centering\def\svgwidth{0.6\columnwidth}
    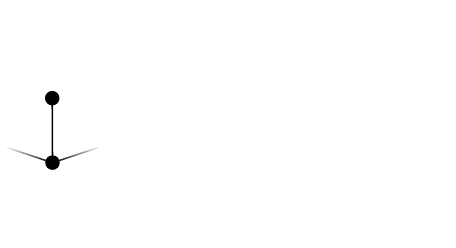
    \caption{Local representation of the relation ($2'$) on a tree.}\label{fig:w_0-2'}
    \end{figure}

    \item[(3')] any tree that has a vertex $v$ with no input, adjacent to another vertex $w$, with $v$ labeled by {\em any} element of $\pp(c;\emptyset)$, is identified with the tree where the vertex $v$ and the edge  between $v$ and $w$ are deleted, and the labels of $v$ and $w$ are composed in $\pp$ (Figure~\ref{fig:w_0-3'}). 
        \begin{figure}[ht]
        \centering\def\svgwidth{0.6\columnwidth}
\begingroup%
  \makeatletter%
  \providecommand\color[2][]{%
    \errmessage{(Inkscape) Color is used for the text in Inkscape, but the package 'color.sty' is not loaded}%
    \renewcommand\color[2][]{}%
  }%
  \providecommand\transparent[1]{%
    \errmessage{(Inkscape) Transparency is used (non-zero) for the text in Inkscape, but the package 'transparent.sty' is not loaded}%
    \renewcommand\transparent[1]{}%
  }%
  \providecommand\rotatebox[2]{#2}%
  \newcommand*\fsize{\dimexpr\f@size pt\relax}%
  \newcommand*\lineheight[1]{\fontsize{\fsize}{#1\fsize}\selectfont}%
  \ifx\svgwidth\undefined%
    \setlength{\unitlength}{131.73172958bp}%
    \ifx\svgscale\undefined%
      \relax%
    \else%
      \setlength{\unitlength}{\unitlength * \real{\svgscale}}%
    \fi%
  \else%
    \setlength{\unitlength}{\svgwidth}%
  \fi%
  \global\let\svgwidth\undefined%
  \global\let\svgscale\undefined%
  \makeatother%
  \begin{picture}(1,0.3367609)%
    \lineheight{1}%
    \setlength\tabcolsep{0pt}%
    \put(0.50092523,0.14905712){\color[rgb]{0,0,0}\makebox(0,0)[t]{\lineheight{1.25}\smash{\begin{tabular}[t]{c}$\sim$\end{tabular}}}}%
    \put(0,0){\includegraphics[width=\unitlength,page=1]{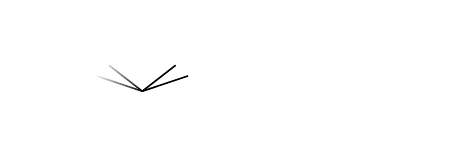}}%
    \put(0.28403401,0.27215057){\color[rgb]{0,0,0}\makebox(0,0)[rt]{\lineheight{1.25}\smash{\begin{tabular}[t]{r}$p_v$\end{tabular}}}}%
    \put(0,0){\includegraphics[width=\unitlength,page=2]{W_0-3.pdf}}%
    \put(0.28497649,0.10215477){\color[rgb]{0,0,0}\makebox(0,0)[rt]{\lineheight{1.25}\smash{\begin{tabular}[t]{r}$p_w$\end{tabular}}}}%
    \put(0,0){\includegraphics[width=\unitlength,page=3]{W_0-3.pdf}}%
    \put(0.66418439,0.15198091){\color[rgb]{0,0,0}\makebox(0,0)[rt]{\lineheight{1.25}\smash{\begin{tabular}[t]{r}$p_w\circ p_v$\end{tabular}}}}%
    \put(0.29118625,0.20742621){\color[rgb]{0.30196078,0.30196078,0.30196078}\makebox(0,0)[rt]{\lineheight{1.25}\smash{\begin{tabular}[t]{r}$\scriptstyle{c}$\end{tabular}}}}%
    \put(0,0){\includegraphics[width=\unitlength,page=4]{W_0-3.pdf}}%
    \put(0.32652751,0.23109389){\color[rgb]{0,0,0}\makebox(0,0)[lt]{\lineheight{1.25}\smash{\begin{tabular}[t]{l}$\scriptstyle{t}$\end{tabular}}}}%
  \end{picture}%
\endgroup%

        \caption{Local representation of the relation (3') on a tree.}\label{fig:w_0-3'}
        \end{figure}
\end{enumerate}
So a $W_0\pp$--algebra is a weak $\pp$--algebra ($W\pp$--algebra) for which the nullary and uniary operation are strict. And in particular, one has that $W_0\pp(c;c)=\pp(c;c)$ and $W_0\pp(c;\emptyset)=\pp(c;\emptyset)$ for any color $c$. Also, one can always choose representatives of elements of $W_0\pp$ using trees with no valence $0$ or $1$ vertices (unless it only has $0$ or $1$ vertex). In a tree that defines an element of $W_0\pp$, an arity one vertex lying in between two other vertices can be slid up or down to either of its neighboring vertices, composing its label with that of the chosen vertex, while an arity zero vertex can be ``pushed down'' to the vertex it is attached to. 

\begin{example}\label{ex:W0O} 
The example relevant to us here is when we set $\pp=\OO$ is the operad of operads. In this case, $\mathfrak{C}=\mathbb{N}$ is the natural numbers and an $\OO$-algebra is a (monochrome) operad.  The nullary operations in $\OO(1;\emptyset)$ encode the identity operation in the $\OO$--algebra, while the unary operations in $\OO(n;n)$ encode the action of the symmetric groups. It follows that a $W_0\OO$--algebra is a strictly symmetric weak operad with a strict identity. 
\end{example}


By construction, the canonical projection $p: W\pp\to \pp$
factors through the quotient map $q:W\pp \rightarrow W_0\pp$. Moreover, both $W\pp$ and $W_0\pp$ are homotopy equivalent to $\pp$:

\begin{prop}\label{prop:WW_0}
There are operad maps $W\pp\to W_0\pp\to \pp$, inducing homotopy equivalences 
\[\begin{tikzcd}W\pp(c;c_1,\ldots, c_n)\arrow[r, "q" above, "\sim" below] & W_0\pp(c;c_1,\ldots, c_n) \arrow[r, "p_0 " above, "\sim" below]& \pp(c;c_1,\ldots, c_n)\end{tikzcd}\]  
for each $n\geq 0$ and each $c;c_1,\ldots, c_n$ in $\mathfrak{C}$. 
 \end{prop}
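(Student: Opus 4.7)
The plan is to prove the two homotopy equivalences separately and then deduce the equivalence $q$ via a two-out-of-three argument.

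First I would construct an explicit section $s\colon\pp\to W\pp$ (and likewise $s_0\colon\pp\to W_0\pp$) sending $p\in\pp(c;c_1,\dots,c_n)$ to the corolla $C_n$ with trivial edge lengths and unique vertex labeled by $p$. Then $p\circ s=\id_\pp$ and $p_0\circ s_0=\id_\pp$ by construction, so it remains to produce a homotopy from the identity of $W\pp$ (respectively $W_0\pp$) to $s\circ p$ (respectively $s_0\circ p_0$). For this I would define the \emph{edge-length scaling} homotopy
\[
H\colon W\pp(c;c_1,\dots,c_n)\times[0,1]\longrightarrow W\pp(c;c_1,\dots,c_n),\qquad H((\mathbb{T},f,\lambda,\underline{s},\underline{p}),\,t)=(\mathbb{T},f,\lambda,t\underline{s},\underline{p}),
\]
multiplying every internal edge length by $t$, and analogously on $W_0\pp$. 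At $t=1$ this is the identity; at $t=0$ every internal edge has length $0$, so applying relation (1) iteratively collapses $\mathbb{T}$ to the corolla $C_n$ decorated by the total $\pp$-composition of $(p_v)_{v\in V(\mathbb{T})}$ along $\mathbb{T}$, which is exactly $s\circ p$ (respectively $s_0\circ p_0$).

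The key technical verification is that $H$ is continuous and descends through the defining relations, which I would check case by case. Relation (1) is stable because a zero-length edge remains zero after scaling. Relations (2), ($2'$) and ($3'$) involve the contracted edge inheriting the maximum of two edge lengths: scaling commutes with $\max$, since $\max(ts_1,ts_2)=t\max(s_1,s_2)$, so these identifications are preserved at every time $t$. Continuity at $t=0$ comes from the fact that in the coend/colimit topology on $W\pp$, as all lengths tend to $0$ the image converges to the corresponding collapsed corolla in the stratum indexed by $C_n$; continuity elsewhere is immediate because scaling is continuous on each indexing tree's stratum and compatible with the identifications on the boundary.

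Having established that both $p\colon W\pp\to\pp$ and $p_0\colon W_0\pp\to\pp$ are (levelwise) homotopy equivalences with common retraction data, the equivalence of $q\colon W\pp\to W_0\pp$ follows from $p=p_0\circ q$ by the two-out-of-three property for homotopy equivalences of topological spaces. Concretely, if $s_0$ is a homotopy inverse to $p_0$, then $s_0\circ p=s_0\circ p_0\circ q\simeq q$, and since $s_0\circ p$ is a composition of two homotopy equivalences, so is its homotopic map $q$.

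The main obstacle I anticipate is the bookkeeping for the well-definedness of $H$ on the quotient defining $W_0\pp$: one must ensure that collapsing an arity-$1$ or arity-$0$ vertex via ($2'$) or ($3'$) \emph{before} scaling yields the same element as collapsing it \emph{after} scaling, and also that the continuous family $H(-,t)$ remains a homotopy after passing to the identifications which glue together faces of different tree strata. This reduces to checking that the $\max$-law for edge lengths intertwines with multiplication by $t$, which is straightforward but needs to be done uniformly across all the relations imposed.
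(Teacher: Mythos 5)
Your proposal is correct and takes essentially the same route as the paper: the paper also exhibits the labelled-corolla inclusions as homotopy inverses to $p$ and $p_0$ (leaving the edge-length scaling homotopy and its compatibility with the relations implicit) and obtains the equivalence for $q$ from the factorization $p=p_0\circ q$. Your explicit verification that $\max(ts_1,ts_2)=t\max(s_1,s_2)$ makes the homotopy descend through relations (2), ($2'$) and ($3'$) is exactly the detail the paper omits.
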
   
  
  \begin{proof}
For each $n\geq 0$ and $c; c_1,\ldots,c_n$ the map $$q:W\pp(c;c_1,\ldots,c_n)\to W_0\pp(c; c_1,\ldots, c_n)$$ is the projection on to the quotient. It is an operad map because if elements of $W\pp$ are 
equivalent in $W_0\pp$ before being composed, they are necessarily also equivalent in $W_0\pp$ after 
composition. The map $p_0:W_0\pp\to \pp$ contracts the remaining edges in the trees of $W_0\pp$ by 
sending the lengths to 0 (and composing the operations in $\pp$). This map is well-defined, as it is 
compatible with the  relations (2') and (3'), and respects the operad structure. These maps induce 
homotopy equivalences, with homotopy inverses given by including $\pp(c_1,\ldots, c_n;c)$ as labelled 
corollas in $W_0\pp(c;c_1,\ldots, c_n)$ or $W\pp(c;c_1,\ldots, c_n)$. 
\end{proof}

\subsection{$B\OO$-algebras are strictly symmetric lax operads}
In this section we show that there is an isomorphism of topological operads $B\OO\cong W_0\OO$. 
In particular, any $B\OO$--algebra will receive a canonical $W\OO$--structure via the map $W\OO\to W_0\OO$. Combining this with Example~\ref{ex:W0O}, gives a description of $B\OO$--algebras as strictly symmetric lax operads with strict identity.

Our main theorem in this appendix is:

\begin{theorem}\label{thm:BOWO}
    The operads $W_0\OO$ and $B\OO$ are isomorphic. 
\end{theorem}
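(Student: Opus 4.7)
The plan is to build an explicit isomorphism $\Psi\colon W_0\OO \to B\OO$ by sending a decorated blackboard tree to its substituted tree together with the bracketing encoded by the internal edges of the blackboard tree. The core observation is that the data of an element in $W_0\OO$, modulo the relations $(1)$, $(2')$, $(3')$, encodes the same combinatorial information as a weighted bracketing of a planar tree: the vertices of the blackboard tree $\mathbb{T}$ decorated by elements of $\OO$ substitute to form a single labeled planar tree $T$, and each internal edge of $\mathbb{T}$ carves out a proper large subtree of $T$, its length becoming the weight of the corresponding bracket.

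More precisely, given a representative $(\mathbb{T}, s, (p_{\mathbb{v}}))$ of an element of $W_0\OO$, I would let $T$ be obtained by substituting each $p_{\mathbb{v}}$ into the vertex $\mathbb{v}$ of $\mathbb{T}$, inheriting labelings $\sigma,\tau$ from those on the $p_{\mathbb{v}}$ and on $\mathbb{T}$. For each internal edge $e \in \ie(\mathbb{T})$, the substitution of the decorations of vertices above $e$ yields a subtree $S_e \subset T$, and these form a nested collection $B = \{S_e\}$ with weights $s_e$. I then check that this assignment respects the defining relations of $W_0\OO$: relation $(1)$ (collapse a length-zero edge) matches the equivalence identifying a weight-zero bracket with its absence in the simplicial realization $\coprod N_r\mathcal{B}(T)\times \Delta^r/\!\sim$; relation $(2')$ absorbs a unary corolla vertex, which contributes no new vertex or bracket to $T$ but only a relabeling; relation $(3')$ absorbs an $\eta$-vertex similarly without changing $T$ or its bracketing. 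In particular, after applying $(2')$ and $(3')$, every element of $W_0\OO$ has a canonical representative whose blackboard vertices are all decorated by large trees, in bijection with non-bracket regions of $(T,B)$.

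The inverse $\Psi^{-1}$ is obtained by reading off this canonical representative: given $(T,\sigma,\tau,B,t)$, the blackboard tree $\mathbb{T}$ has one vertex for each maximal region between consecutive brackets of $B$, decorated by the corresponding subtree, with one internal edge per bracket of length $t_S$. Bijectivity on the underlying sets is then formal from the construction. To promote this to a homeomorphism of operads, I would invoke (the proof of) Lemma~\ref{-cubical}, which supplies the cubical description of $|\mathcal{B}(T)|$ identifying a weighted bracketing $(B_0\subset \cdots \subset B_r, t)$ with the assignment of a weight in $[0,1]$ to each bracket of $B_r$; under this identification, the cube $[0,1]^{|\ie(\mathbb{T})|}$ defining the space of $W_0\OO$-operations above a fixed decorated $\mathbb{T}$ maps homeomorphically onto the corresponding cubical chart of $|\mathcal{B}(T)|$. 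Compatibility with operad composition reduces to the observation that both composites graft with a new edge of length one (respectively, add the inserted tree $T'$ as a new bracket of weight one whenever it is large), matching equation \eqref{eq: Bracket comp}.

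The main obstacle will be making the passage between the simplicial and cubical descriptions of $|\mathcal{B}(T)|$ fully precise and globally continuous, together with the equivariance bookkeeping needed to show that the labelings $\sigma,\tau$ transfer coherently between the blackboard-tree picture on the $W_0\OO$ side and the labeled-tree picture on the $B\OO$ side; relations $(2')$ and $(3')$ are exactly what is needed to rigidify this choice of labeling, so the verification of well-definedness of $\Psi$ and $\Psi^{-1}$ under these relations is where the argument really bites.
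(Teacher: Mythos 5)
Your proposal is correct and follows essentially the same route as the paper: define $\Psi$ on a reduced representative by substituting the vertex decorations to get $(T,\sigma,\tau)$ and taking one bracket per internal edge of $\mathbb{T}$ (the substitution of everything above it) weighted by that edge's length, check compatibility with relations (1), (2$'$), (3$'$) and with grafting, and invert by reading off the nested-bracket structure as a levelled blackboard tree. The only cosmetic difference is at the last step: you propose matching the cubes $[0,1]^{|\ie(\mathbb{T})|}$ directly against the cubical charts of $|\mathcal{B}(T)|$ from Lemma~\ref{-cubical}, whereas the paper upgrades the continuous bijection to a homeomorphism by compactness of the source and Hausdorffness of the target; both work.
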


Combining Theorem~\ref{thm:W_0O-Omega_0} and Theorem~\ref{thm:BOWO}, we immediately get 

\begin{cor}
There exist isomorphisms of categories
        $$ W_0{O}\mathrm{-Alg}_{\mathcal{S}} \cong (\mathcal{S}^{\widetilde\Omega_0^{op}})_{strict}.$$
\end{cor}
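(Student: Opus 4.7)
The plan is to derive this corollary as an immediate consequence of the two theorems named in its statement, without doing any extra work on brackets or dendroidal spaces directly. By Theorem~\ref{thm:BOWO}, there is an isomorphism of topological operads $W_0\OO\cong B\OO$. Any isomorphism of operads in $\mathcal{S}$ induces, by precomposition of structure maps, an isomorphism of the associated categories of algebras in $\mathcal{S}$. Applied here this yields an isomorphism of categories
\[
W_0\OO\mathrm{-Alg}_{\mathcal{S}} \;\cong\; B\OO\mathrm{-Alg}_{\mathcal{S}}.
\]
Composing this with the isomorphism of Theorem~\ref{thm:W_0O-Omega_0}, namely $\Phi\colon B\OO\mathrm{-Alg}_{\mathcal{S}}\xrightarrow{\cong}(\mathcal{S}^{\widetilde\om_0^{op}})_{strict}$, we obtain the desired isomorphism.

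Concretely, I would describe the composite as follows. Given a $W_0\OO$--algebra $\pp$, one restricts its structure maps along the operad isomorphism $B\OO\xrightarrow{\cong}W_0\OO$ of Theorem~\ref{thm:BOWO} to obtain a $B\OO$--algebra with the same underlying symmetric sequence $\{\pp(n)\}$; applying the functor $\Phi$ then produces a strictly reduced $\widetilde\om_0$--diagram satisfying the strict Segal condition. An inverse is produced by composing the inverse of $\Phi$ from Theorem~\ref{thm:W_0O-Omega_0} (which reads off a $B\OO$--algebra from its values on corollas and the morphisms of $\widetilde\om_0$) with restriction along the inverse operad isomorphism $W_0\OO\xrightarrow{\cong}B\OO$. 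Both composites are identities on the level of underlying symmetric sequences and on structure maps, so we get an isomorphism of categories.

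The only thing to check is that the two steps compose correctly as functors (not merely as bijections on objects). This is automatic: both the functor $B\OO\mathrm{-Alg}_{\mathcal{S}}\to W_0\OO\mathrm{-Alg}_{\mathcal{S}}$ induced by an operad isomorphism and the functor $\Phi$ are natural in maps of algebras, so their composite is. There is no genuine obstacle here; the corollary is purely a concatenation of the two isomorphisms of the previous theorems.
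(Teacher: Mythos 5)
Your proposal is correct and is exactly the paper's argument: the paper derives this corollary by simply combining Theorem~\ref{thm:W_0O-Omega_0} and Theorem~\ref{thm:BOWO}, stating that the isomorphism follows "immediately." Your additional remarks on why an operad isomorphism induces an isomorphism of algebra categories and why the composite is functorial are routine verifications that the paper leaves implicit.
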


The proof of the theorem will be given in Section~\ref{sec:proof of BOWO}.  
Though not saying this explicitly, the proof uses the natural association of a bracketing to a \emph{clustering tree}, which is described for instance in \cite[Definition 2.7]{Sinha04}.

\medskip

Since the $W$-construction is built out of cubes, to prepare for the proof, we start by giving  an alternative description of $B\OO$ in terms of cubes as well. 

\begin{definition}
    We can define a \emph{weighted bracketing} 
    of a tree $T$ to be a pair $(B,t)$ with bracketing $B=\{S_{j}\}_{j\in J}$ of $T$ and $t\in[0,1]^J$. The $j$th coordinate $t_j\in t$ is the \emph{weight} of $S_j$. The addition of weights associates to each bracketing a cube $[0,1]^{|B|}$. 
    These cubes fit together to form a space: 
    $$\mathbb{B}(T)= \coprod\limits_{{B\in \mathcal{B}(T)}} [0,1]^{|B|}/_\sim $$ where the equivalence relation is by 
    identifying any bracketings with weights that only differ by a bracket of weight $0$ (see Figure \ref{fig:subfig_hexagon}). 
\end{definition}

    \begin{figure}[ht]
    \centering
    \subfigure[A bracketing $B$ of $T$ with $2$ brackets and its associated cube ${[0,1]}^{|B|}$ in $\mathbb{B}(T)$.]{ \label{fig:subfig_bracketonly}
    \includegraphics[width=0.4\textwidth]{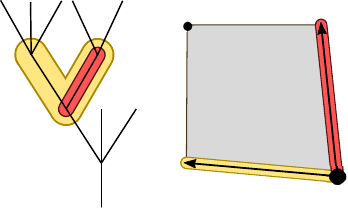}}
    \qquad \qquad
    \subfigure[All bracketings of $T$ and their associated cubes assembled to make $\mathbb{B}(T)$ a hexagon.]{
    \label{fig:subfig_hexagon}
    \includegraphics[width=0.3\textwidth]{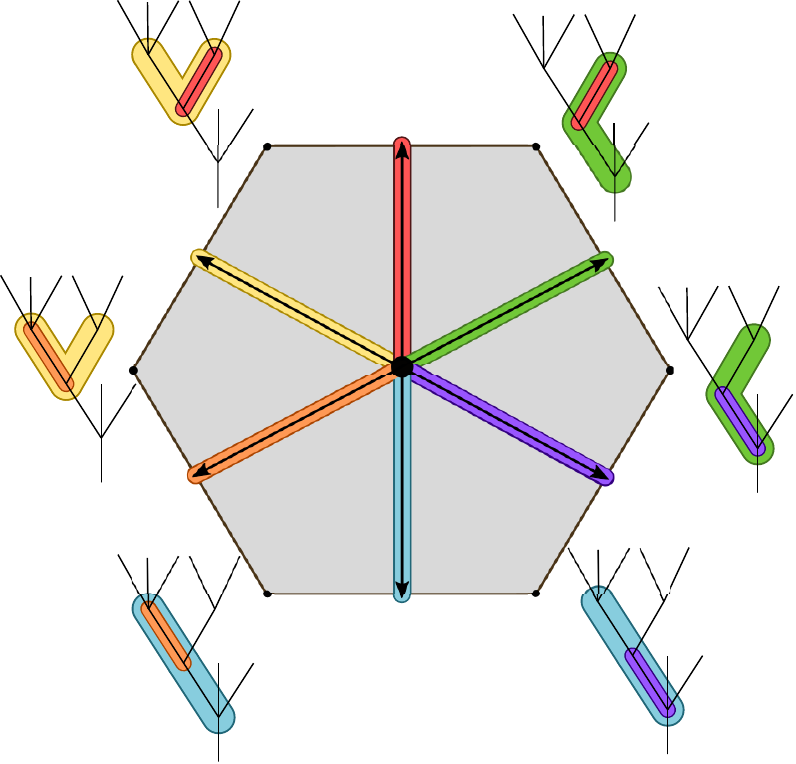}}
    \vspace{-1em}
    \caption{A tree $T$ and its corresponding space of bracketings $\mathbb{B}(T)$.}
    \end{figure}

Recall from Definition~\ref{def: poset of brackets} the poset $\B(T)$ of bracketings of a tree $T$ under the inclusion relation. 

\begin{lemma}\label{-cubical}
    Let $T$ be a tree. There is a homeomorphism $|\mathcal{B}(T)|\cong \mathbb{B}(T)$, between the realization of the nerve of the poset $\mathcal{B}(T)$ and the cubical space $\mathbb{B}(T)$.
\end{lemma}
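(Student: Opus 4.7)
The plan is to define an explicit bijection $\Phi\colon |\mathcal{B}(T)|\to \mathbb{B}(T)$ following the description of points of the realisation already recalled in the text, and then to verify that it is a homeomorphism. A point of $|\mathcal{B}(T)|$ is represented by a strict chain $B_0\subsetneq B_1\subsetneq \cdots \subsetneq B_r$ together with coordinates $1=t_0\ge t_1\ge \cdots \ge t_r\ge 0$, and I send it to the weighted bracketing on $B_r=\bigcup_i B_i$ that assigns weight $t_i$ to each bracket in $B_i\setminus B_{i-1}$ (with the convention $B_{-1}=\emptyset$). The candidate inverse $\Psi$ takes $(B,w)\in \mathbb{B}(T)$, lists the distinct values of $w$ on $B$ in strictly decreasing order $1=u_0>u_1>\cdots>u_r\ge 0$ (including $u_0=1$ as a formal value even when no bracket has weight $1$, in which case the resulting $B_0$ is empty), and returns the chain $B_i:=\{S\in B : w(S)\ge u_i\}$ with coordinates $(u_0,\ldots,u_r)$.

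First I would check that $\Phi$ and $\Psi$ are well-defined with respect to the identifications on each side. The simplicial face relations in $|\mathcal{B}(T)|$ identify $(B_0\subsetneq \cdots \subsetneq B_r, t)$ with $t_i=t_{i+1}$ with the chain obtained by merging $B_i$ and $B_{i+1}$ into a single level, and identify $t_r=0$ with dropping $B_r$. Under $\Phi$, the first corresponds to two consecutive weight-levels collapsing into a single value, and the second to discarding brackets of weight zero; these are exactly the identifications defining $\mathbb{B}(T)$. Well-definedness of $\Psi$ is immediate from its construction.

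The combinatorial fact that makes bijectivity transparent is that for each $B\in \mathcal{B}(T)$, the down-set $\{B'\in \mathcal{B}(T) : B'\subseteq B\}$ is canonically the Boolean poset $2^B$, since any subcollection of a nested family is itself nested. The classical identification of the order complex of $2^B$ with the cube $[0,1]^{|B|}$ (the ``staircase'' triangulation by permutohedral simplices, under which a chain $C_0\subsetneq \cdots \subsetneq C_r=B$ with coordinates $(1=t_0\ge \cdots \ge t_r\ge 0)$ maps to the point whose $S$-coordinate is $t_k$ when $S\in C_k\setminus C_{k-1}$) is precisely the restriction of $\Phi$ to those simplices of $|\mathcal{B}(T)|$ whose maximal element is contained in $B$. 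Assembling over all $B\in \mathcal{B}(T)$ and matching the simplicial boundary identifications with the cubical face identifications yields $\Phi$ and $\Psi$ as mutual inverses.

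Finally, both $|\mathcal{B}(T)|$ and $\mathbb{B}(T)$ are compact Hausdorff spaces built from finitely many simplices or cubes glued along faces; on each simplex, $\Phi$ restricts to an affine map into a fixed cube and is therefore continuous, so $\Phi$ is continuous on the colimit. A continuous bijection between compact Hausdorff spaces is a homeomorphism, which completes the argument. The main obstacle is the bookkeeping at the boundaries, namely verifying that merging two consecutive levels of a chain (when two coordinates become equal) corresponds correctly to two brackets sharing the same weight in the cubical picture; once this matching of equivalence relations is handled, the rest of the proof follows formally.
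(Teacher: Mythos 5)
Your map $\Phi$ is exactly the paper's map $\chi$ (simplex-by-simplex assignment of weight $t_i$ to brackets in $B_i\setminus B_{i-1}$), and your inverse $\Psi$, obtained by sorting the weights into decreasing order, is the paper's inverse as well; the proposal is correct and takes essentially the same route. Your added remarks (the identification of the down-set of $B$ with $2^B$ and its order complex with the cube, and the compact-Hausdorff argument upgrading the continuous bijection to a homeomorphism) are fine supplementary details that the paper leaves implicit.
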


\begin{proof}
We consider the topological $k$--simplex as the space 
$$\Delta^k=\{(s_1, \ldots, s_k) \in \mathbb{R}^k \colon 1=s_0\ge s_1 \ge \dots\ge s_k\ge 0\}.$$ Fix a tree $T$ and let $\sigma$ denote a $k$--simplex $B_0\subseteq\dots\subseteq B_k$ of the nerve of the poset $\mathcal{B}(T)$. To each $\sigma$ we associate a map 
$$\chi_\sigma:\Delta^k\longrightarrow \mathbb{B}(T)$$
where $\chi_\sigma(s_1,\dots,s_k)$ is the weighted bracketing of $T$ in which all trees of $B_0$ have weight $1=s_0$ and all trees of $B_i\backslash B_{i-1}$ have weight $s_i$ for $i\ge 1$.

The maps $\chi_{\sigma}$ assemble into a continuous map 
\[
\chi:|\mathcal{B}(T)|=\Big(\coprod_{k\ge 0} \mathcal{B}(T)_k\times 
\Delta^k\Big)/_\sim \ \longrightarrow \ \mathbb{B}(T)=\coprod\limits_{{B\in \mathcal{B}(T)}} [0,1]^{|B|}/_\sim .
\]

This map is a homeomorphism with inverse defined by mapping a cube $[0,1]^{|B|}$ in $\mathbb{B}(T)$ to the realization of the sub-poset $\mathcal{B}_{\le B}$, which is a cube whose dimension is the cardinality $|B|$ of the bracketing. Explicitly, given an element $(B,t)\in \mathbb{B}(T)$ with $B=(B_0,\dots,B_k)$, we order the coordinates of $t=(t_1,\dots,t_k)$ so that they are in decreasing order 
\begin{multline*}
    1=t_{\sigma(1)}=\dots= t_{\sigma(r_1)}> t_{\sigma(r_1+1)}=\dots=t_{\sigma(r_1+r_2)}> \\
    \dots> t_{\sigma(r_1+\dots+r_{l+1}+1)}=\dots=t_{\sigma(r_1+\dots+r_{l+2})}=0.
    \end{multline*}
This defines an $l$--simplex $\bar B_0\subset \bar B_1\subset\dots\subset \bar B_l$ by setting \[\bar B_i=B_{\sigma(1)}\cup\dots\cup B_{\sigma(r_1+\dots+r_{i+1})}.\qedhere\]\end{proof}

\subsection{Proof of Theorem~\ref{thm:BOWO}}\label{sec:proof of BOWO}
In order to prove $B\OO\cong W_0\OO$,  we first recall some definitions. 
Recall that elements $(\mathbb{T},f,\lambda,s,p)\in  W_0\mathcal{O}(n;m_1,\dots,m_k)$ are represented by a planar tree $\mathbb{T}$ with $k$ leaves ordered by the bijection $\lambda:\{1,\dots,k\}\to L(\mathbb{T})$ and with an edge colouring $f:E(\mathbb{T})\to \mathbb{N}$ that, in particular, colours the leaves by $m_1,\dots, m_k$. In addition, $\mathbb{T}$ is equipped with a collection of lengths $s\in [0,1]^{|\ie(\mathbb{T})|}$, and a decoration of the vertices $p=(p_{v})_{v\in V(\mathbb{T})}$ by operations by $p_v$ in $\OO(out(v);in(v))$.

We call a representative $(\mathbb{T},f,\lambda,s,p)$ {\em reduced} if the tree $\mathbb{T}$ has no vertices of arity zero or one, unless such a vertex cannot be removed using the equivalence relation in $W_0\OO$, i.e.~if $\mathbb{T}$ is the corolla $C_0$ or $C_1$. In particular, every element of $ W_0\OO$ has a reduced representative, which in general is not unique. It greatly simplifies the proof of Lemma~\ref{lemma:Constructing-the-map-WO-BO} to work with reduced representative. 

For a given tree $\mathbb{T}$, and vertices $v,w \in V(\mathbb{T})$, we say that $w$ is \textit{above} $v$ if the unique shortest path between $w$ and the root of the tree goes through $v$. In this case $v$ is \textit{below} $w$. Every other vertex of $\mathbb{T}$ is above the \textit{root vertex} $v_0$ whose outgoing edge is the root of $\mathbb{T}$.

\begin{lemma}\label{lemma:Constructing-the-map-WO-BO}
    There is a map of topological operads $\Psi:W_0\mathcal{O}\rightarrow B\mathcal{O}$.
\end{lemma}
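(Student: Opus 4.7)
The plan is to define $\Psi$ on representatives $(\mathbb{T},f,\lambda,s,p)\in W_0\OO(n;m_1,\ldots,m_k)$ by collapsing the ``tree of trees'' into a single tree carrying a bracketing. First, form $T\in\OO(n;m_1,\ldots,m_k)$ by iterated $\OO$-composition (i.e.~vertex substitution) of the labels $p_v$ along the internal edges of $\mathbb{T}$, using $\OO$-associativity to make this well-defined. The orderings $\sigma,\tau$ on $T$ are inherited from the internal labelings of the $p_v$'s together with $\lambda$, following the planar structure of $\mathbb{T}$. For each non-root vertex $v\in V(\mathbb{T})$, let $S_v\subset T$ be the subtree obtained by restricting this substitution to $v$ and all vertices of $\mathbb{T}$ above it. The tree structure of $\mathbb{T}$ forces any pair $S_v,S_{v'}$ to be nested or disjoint, so $B:=\{S_v : v\neq v_0,\ S_v \text{ is a large proper subtree of } T\}$ is a bracketing of $T$. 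Assign to each $S\in B$ the weight $t(S):=\max\{s_e : e \text{ is the edge just below some } v\in V(\mathbb{T}) \text{ with } S_v=S\}$; for reduced representatives this max is over a single element, but the max-rule is needed to handle arbitrary representatives uniformly.

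The central verification is that $\Psi$ is constant on equivalence classes generated by relations (1), ($2'$), ($3'$). For (1), collapsing an edge of length zero while $\OO$-composing the adjacent labels leaves $T$ unchanged by $\OO$-associativity, and the associated bracket has weight zero, which is discarded in $\mathbb{B}(T)\cong|\B(T)|$ by Lemma~\ref{-cubical}. For ($2'$), an arity-one vertex $v$ carries a corolla-permutation label $p_v\in\OO(c;c)\cong\Sigma_c$; since $\OO$-substitution of a corolla adds no vertex, $T$ is unchanged by absorption of $v$. If $v$ is a leaf of $\mathbb{T}$, then $S_v$ is a single-vertex subtree and contributes no bracket; otherwise $S_v$ coincides with $S_u$ for the vertex $u$ directly above $v$, and the maximum of the two corresponding edge-lengths matches the length assigned to the merged edge in the absorbed representative. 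Relation ($3'$) is analogous, with $p_v=\eta$ trimming a leaf. Continuity of $\Psi$ follows from Lemma~\ref{-cubical}: the cube $[0,1]^{|\ie(\mathbb{T})|}$ maps to a cube in $\mathbb{B}(T)$ via the assignment $s_e\mapsto t(S_v)$, with the max-rule reflecting the cube quotient when coordinates coincide.

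For the operad structure, composition in $W_0\OO$ grafts $(\mathbb{T}',f',\lambda',s',p')$ onto the $i$-th leaf of $(\mathbb{T},f,\lambda,s,p)$ with a new internal edge of length one. Under $\Psi$, the substituted tree becomes $T\bullet_{\sigma(i),\tau'}T'$ in $\OO$; the brackets coming from $\mathbb{T}$ and $\mathbb{T}'$ carry over correctly, and the new length-one edge produces an extra bracket $T'$ (when large) of weight one, which is precisely formula~(\ref{eq: Bracket comp}) for the $B\OO$-composition. Units and symmetric actions, encoded on both sides by corolla representatives with no brackets and by the leaf-labelings, are preserved by construction, so $\Psi$ is a map of operads.

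The hardest part will be the bookkeeping of relation ($2'$) for arbitrarily long chains of arity-one vertices: one must verify that reducing a chain $v_1,\ldots,v_r$ whose interior is labelled by corollas yields a single bracket with weight $\max\{s_{e_1},\ldots,s_{e_{r+1}}\}$, independently of the order of absorptions and of which neighbour absorbs which, and that this maximum descends consistently under the simplicial identification of Lemma~\ref{-cubical}. A secondary subtlety is checking that the labelings $\sigma,\tau$ of $T$ produced by chained $\OO$-compositions match the conventions of Definition~\ref{def:operad of operads}, so that $\Psi$ lands in the correct component of $B\OO(n;m_1,\ldots,m_k)$.
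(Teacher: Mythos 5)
Your construction is the same as the paper's: the underlying tree is $T=p_0(\mathbb{T},f,\lambda,s,p)$, each non-root vertex $v$ of $\mathbb{T}$ contributes the bracket $S_v=p_0(\mathbb{T}_v)$ weighted by the length of its outgoing edge, and grafting produces the extra weight-one bracket $T'$ exactly as in the $B\OO$-composition, so this is essentially the paper's proof. The only cosmetic difference is that you define $\Psi$ on arbitrary representatives via the max-rule, whereas the paper first passes to reduced representatives and then checks that the two reductions permitted by relation ($2'$) agree; these amount to the same verification, since relation ($2'$) is itself the max-rule.
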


The map $\Psi$ is illustrated in Figure~\ref{fig:bracketing}. 

\begin{figure}[h!t]
\centering
\begin{minipage}{0.45\textwidth}
    \centering\def\svgwidth{\columnwidth}
    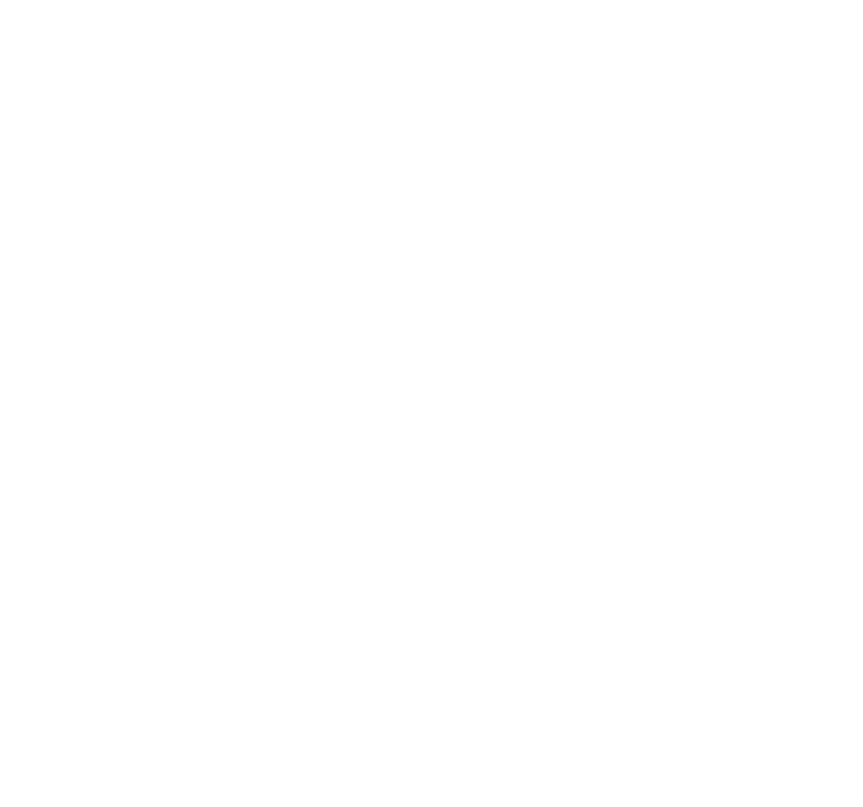
\end{minipage}
\begin{minipage}{0.5\textwidth}
    \centering\def\svgwidth{\columnwidth}
    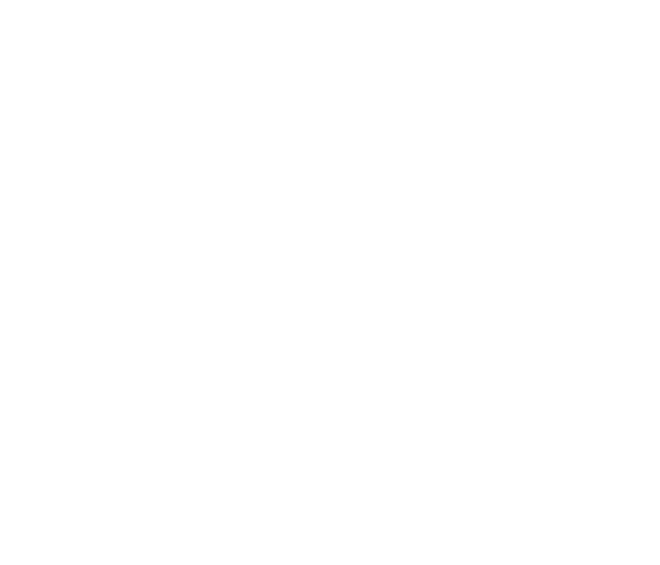
\end{minipage}
\caption{Element of $W_0\OO$ and corresponding element of $B\OO(16;2,3,4,3,4,3,3)$. }\label{fig:bracketing}
\end{figure}

\begin{proof}
Given a reduced element $ (\mathbb{T},f,\lambda,s,p)\in W_0\OO(n;m_1,\dots,m_k)$, we construct \[\Psi(\mathbb{T},f,\lambda,s,p)=(T,\sigma,\tau,B,t)\in B\mathcal{O}(n;m_1,\dots,m_k),\] where 
$(B,t)$ is a weighted bracketing on the labelled tree 
\[(T, \sigma, \tau) = p_0 (\mathbb{T},f,\lambda,s,p) \in \OO (n;m_1,\dots,m_k)\] that is the image of $(\mathbb{T},f,\lambda,s,p)$ under the canonical projection $p_0\colon W_0\OO\to \OO$. 

\medskip
    
The bracketing $B$ is constructed from the set of vertices of $\mathbb{T}$. If $\mathbb{T}$ has at most one vertex, then set $B=\emptyset$ to be the trivial bracketing, in which case there are no weights to chose so $t$ is the empty map.

Otherwise, since $(\mathbb{T},f,\lambda,s,p) $ is reduced, and $\mathbb{T}$ is not a corolla, all its vertices have arity $\geq 2$. Let $v_0$ be the root vertex of $\mathbb{T}$. 
   For each $v\in V(\mathbb{T})\backslash \{v_0\}$, let 
   \[(S_v,\sigma_v,\tau_v) = p_0(\mathbb{T}_v,f|_{\mathbb{T}_v},\lambda|_{\mathbb{T}_v},p|_{\mathbb{T}_v}),
   \] where $\mathbb{T}_v$ is the subtree of $\mathbb{T}$ with $v$ as its root vertex, and containing all the vertices above $v$. Observe, in particular, that, since $v\neq v_0$, the outgoing edge $e_v$ of $v$ -- that is the root of $S_v$ -- is internal in $T$. Since
   the vertices of $\mathbb{T}$ have arity at least 2, 
   each $S_v$ is a large proper subtree of $T$, and because composition in $\OO$ is by substitution, 
  \[ B  = \{S_v \ : \ v\in V(\mathbb{T})\backslash \{v_0\} \}\] is a collection of nested subtrees, and hence a bracketing.

To define the weight function $t$ of $B$, we associate, to each $S_v$ the weight $t_v=s(e_v)$, the length  of $e_v \in \ie(\mathbb{T})$.  
  This completes the definition of $\Psi(\mathbb{T},f,\lambda,s,p)$. 
  
  \medskip

We need to check that the defined bracketing is independent of our choice of (reduced) representative $(\mathbb{T},f,\lambda,s,p)\in  W_0\mathcal{O}(n;m_1,\dots,m_k)$, and continuous. In particular, we must check that it is compatible with the relations (1), 
(2') and (3') in Definition~\ref{def: W construction} and Section~\ref{sec: W construction}.

To prove that $\Psi$ is well defined with respect to relation (1), and hence also continuous, let $s_j$ be the length of an internal edge of $\mathbb{T}$ with end vertices $v,w$, where $v$ is above $w$. Then if $s_j$ goes to $0$ in $W_0 \OO$, the vertices $v,w$ are identified and their labels are composed in $\OO$. Applying $\Psi$, this will precisely have the effect of taking the weight of the bracketing $S_v$ to $0$, which is equivalent to simply forgetting the bracketing $S_v$ in $B\OO$.

Relation (2') allows that  a vertex $v$ with only 1 input in $\mathbb{T}$, labeled by a permutation $\alpha\in \OO(n;n)\cong \Sigma_n$, to be composed to either of the vertices it shares an edge with. So suppose $\mathbb{T}$ is the reduction of a tree $\widetilde{\mathbb{T}}$ with an arity one vertex $v$ attached to two vertices $w$ and $w'$, with $w'$ below $w$. We may assume that $w$ and $w'$ both have arity at least two. We let $\mathbb{T}$ be the tree obtained from $\widetilde{\mathbb{T}}$ by collapsing the edge between $w$ and $v$ 
and let $\mathbb{T}'$ be the tree obtained from $\widetilde{\mathbb{T}}$ by collapsing the edge between $v$ and $w'$. 
We need to check that the brackets $S_w$ and $S_{w'}$ are the same if computed using the representative    $(\mathbb{T},f,\lambda,s,p)$ associated to $\mathbb{T}$ or    $(\mathbb{T}',f',\lambda,s',p')$ associated to $\mathbb{T}'$. This is immediate for the bracket $S_{w'}$ because $w'$ is below $v$ and thus $p_0(\mathbb{T}_{w'},f|_{\mathbb{T}_{w'}},\lambda|_{\mathbb{T}_{w'}},p|_{\mathbb{T}_{w'}})=p_0(\mathbb{T}'_{w'},f'|_{\mathbb{T}'_{w'}},\lambda|_{\mathbb{T}'_{w'}},p'|_{\mathbb{T}'_{w'}})$. For the vertex $w$, the two representatives in general do not have the same image under $p_0$, but if $p_0(\mathbb{T}'_{w},f'|_{\mathbb{T}'_{w}},\lambda|_{\mathbb{T}'_{w}},p'|_{\mathbb{T}'_{w}})=(S'_w,\sigma'_w,\tau'_w)$, we still have that $S'_w=S_w$. In fact, only $\tau'_w$ might differ from $\tau_w$ as the vertex $v$ is a permutation $\alpha\in \OO(n;n)=\Sigma_n$ that acts on a labeling $p\in \OO(n;k_1,\dots,k_l)$ by permuting the leaves of the labeled tree representing $p$. 

 For relation (3') in the definition of $W_0$, the relation gives a unique way to reduce a tree if an arity zero vertex is attached to another vertex, so the representative with no arity $0$ vertices is unique and nothing needs to be checked.

\medskip

Finally, we check that $\Psi$ is a map of operads. Consider a composition $ (\mathbb{T}_1,f_1,\lambda_1,s_1,p_1)\circ_i(\mathbb{T}_2,f_2,\lambda_2,s_2,p_2) $ of reduced representatives in $ W_0 \OO$, and  let $\Psi (\mathbb{T}_j,f_j,\lambda_j,s_j,p_j) = (T_j, \sigma_j, \tau_j, B_j, t_j)$ for $j = 1, 2$. Composition in $W_0\OO$ is induced by grafting a tree $\mathbb{T}_2$ onto the $i$th leaf of $\mathbb{T}_1$, creating a new internal edge of length $1$. If $\mathbb{T}_2$ has at least one vertex of arity $2$,  this corresponds exactly under $\Psi$ to adding a new bracket $T_2$ of weight $1$ in the composed tree $T_1\bullet_{\sigma_1(i), \tau_2} T_2$ where the composition here is by insertion. 
  If not, then, since $(\mathbb{T}_2,f_2,\lambda_2,s_2,p_2)$ is reduced, $\mathbb{T}_2$ has either no vertices or a single arity 1 vertex, so $T_2$ is either the exceptional tree $ \eta$ or a corolla $C_n$. In each case, the newly added edge in the composed tree $\mathbb{T}_1\circ_i\mathbb{T}_2$ will be collapsed when going to a reduced tree, corresponding under $\Psi$ to a composition in $B\OO$ where no extra bracket is added. This finishes the proof. 
\end{proof}

\begin{lemma}\label{lemma:Bijective-map-of-operads}
For every $(n;m_1,\dots,m_k)$ the map $\Psi:W_0\mathcal{O}(n;m_1,\dots,m_k) \longrightarrow B\mathcal{O}(n;m_1,\dots,m_k)$ is a bijection.
\end{lemma}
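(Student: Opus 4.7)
The plan is to prove bijectivity by constructing an explicit inverse map $\Phi\colon B\OO(n;m_1,\ldots,m_k)\to W_0\OO(n;m_1,\ldots,m_k)$. Given $(T,\sigma,\tau,B,t)\in B\OO$, the bracketing $B$ together with the ambient tree $T$ forms a tree-shaped poset $\widetilde{B}=B\cup\{T\}$ under inclusion. For each $S\in\widetilde{B}$, let $\mathrm{Max}(S)=\{S'\in B:S'\subsetneq S\text{ maximal in }\widetilde{B}\}$ denote its immediate sub-brackets. I would build a \emph{clustering tree} $\mathbb{T}$ whose internal vertices are indexed by $\widetilde{B}$ (with $v_T$ as the root), whose internal edges are $v_{S'}\to v_S$ for each $S'\in\mathrm{Max}(S)$ (assigned length $t_{S'}$), and which has a leaf attached to $v_S$ for each vertex $w\in V(S)\setminus\bigcup_{S'\in\mathrm{Max}(S)}V(S')$. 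The leaf bijection $\lambda$ sends $i$ to the leaf corresponding to $\sigma(i)\in V(T)$. The label on $v_S$ is the quotient $p_{v_S}=(S/\mathrm{Max}(S),\sigma_S,\tau_S)\in\OO$, with $\sigma_S,\tau_S$ induced from the restrictions of $\sigma$ and $\tau$ to $V(S)$ and $L(S)$. The degenerate cases $T=\eta$ and $T=C_m$ (necessarily with $B=\emptyset$) are handled by setting $\mathbb{T}=\eta$ or $\mathbb{T}=C_m$ with its single vertex labeled by $T$.

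To verify that $\Phi$ lands in $W_0\OO$, I would check that all edge colors are consistent: an internal edge $v_{S'}\to v_S$ has color $|L(S')|$, matching both the root color of $p_{v_{S'}}$ and the arity of the collapsed vertex $[S']$ in $S/\mathrm{Max}(S)$; each leaf of $\mathbb{T}$ is colored by the arity $m_i$ of the corresponding vertex $\sigma(i)\in V(T)$, matching the arity of that vertex in the label of the parent $v_S$. The tree $\mathbb{T}$ is reduced: since every bracket $S\in B$ is large ($|V(S)|\geq 2$), a short counting argument shows $|V(S/\mathrm{Max}(S))|\geq 2$ for all $S\in\widetilde{B}$, so no internal vertex of $\mathbb{T}$ has arity $0$ or $1$.

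Then $\Psi\circ\Phi=\mathrm{id}$ follows because, by construction, applying $p_0$ to the subtree $\mathbb{T}_{v_S}$ of $\Phi(T,\sigma,\tau,B,t)$ recovers exactly the bracket $S$ (the iterated substitutions in $\OO$ reassemble the quotients $S'/\mathrm{Max}(S')$ back into $S$), and the edge lengths of $\mathbb{T}$ encode the weights $t$ tautologically. Conversely, for $\Phi\circ\Psi=\mathrm{id}$, given a reduced $(\mathbb{T},f,\lambda,s,p)$, the non-root vertices $v$ of $\mathbb{T}$ correspond bijectively under $\Psi$ to the brackets $S_v=p_0(\mathbb{T}_v,\ldots)$, the inclusion poset of these brackets coincides with the tree structure of $\mathbb{T}$ (using reducedness to avoid redundancies), and $\Phi$ reconstructs each label $p_v$ as the quotient $S_v/\mathrm{Max}(S_v)$, which by the substitution structure of $p_0$ equals the original $p_v$.

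The main technical obstacle is ensuring that the bijective labelings $\sigma_S,\tau_S$ on each sub-label $p_{v_S}$ are chosen coherently enough that the operadic substitution in $\OO$ reassembles the labeled planar tree $(T,\sigma,\tau)$ faithfully, and that the construction is compatible with the equivalence relation $(\ast)$ on tree isomorphisms in the free operad together with the relations $(2')$ and $(3')$ used to define $W_0\OO$. This requires a careful bookkeeping of how $\sigma$ and $\tau$ restrict to subtrees $S$ and descend to quotients $S/\mathrm{Max}(S)$, and how the composition described in equation \eqref{eq: substitution compatible with labelling} reverses this on the nose.
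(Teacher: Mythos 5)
Your proposal is correct and takes essentially the same route as the paper: the paper's surjectivity argument constructs exactly your clustering tree $\mathbb{T}$ from the nesting poset of $B\cup\{T\}$ (using a non-reduced representative with an extra arity-one root vertex carrying $\tau$, where you fold $\tau$ into the label of $v_T$ -- equivalent under relation $(2')$), and its injectivity argument is your $\Phi\circ\Psi=\mathrm{id}$ check. The only point worth adding is compatibility with relation $(1)$: a bracket of weight $0$ may be discarded in $B\OO$ while the corresponding length-$0$ edge collapses in $W_0\OO$, so one should either fix a representative with all weights nonzero or note this matching explicitly.
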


\begin{proof}
We start by checking that $\Psi$ is surjective. 
So let $(T,\sigma,\tau,B,t)$  of $B\mathcal{O}(n;m_1,\dots,m_k)$ 
with $B=\{S_j\}_{j\in J}$ and $t\in[0,1]^J$. 
We may always choose 
a representative where all brackets have non-zero weight, so we assume that $t_j\not=0$ for any $j\in J$.
We will construct an element $(\mathbb{T},f,\lambda,s,p)\in W_0\mathcal{O}(n;m_1,\dots,m_k)$ in the preimage of $(T,\sigma,\tau,B,t)$. 

If $B = \emptyset$ is the empty bracketing then define $\mathbb{T}$ to be the corolla with $k$ leaves, with $f$ coloring its leaves $m_1,\dots,m_k$ in the ordering given by $\lambda$, and the root by $n$ and $p$ labeling the unique vertex by $(T,\sigma,\tau)$. The weights $s$ are trivial in this case.  By definition, $\Psi$ takes this element to $(T,\sigma,\tau,\emptyset,0)$ as required.  

We now assume that $B = \{S_j\}_{j \in J}$ is non-empty. To encode the leaf labelling $\tau$ on $T$, it is convenient to choose a non-reduced representative of its preimage, using a tree $\mathbb{T}$ with one valence $1$ vertex at its root.  
We define $\mathbb{T}$ as follows: 
we  set  $V(\mathbb{T})=\{v_\tau,v_T\}\cup\{v_j\}_{j\in J}$, where the vertex $v_j$ corresponds to the bracket $S_j\in B$, $v_T$ corresponds to an additional ``trivial bracket" $S_T:=T$, and $v_\tau$ will be associated to the permutation $\tau$. 
To construct the tree, we set $v_i$ above $v_j$ if 
$S_i\subset S_j$, connecting the two vertices by an edge $e_i$ if there is no $k\in J$ such that $S_i\subsetneq S_k \subsetneq S_j$, where we allow $S_j=S_T$. This edge $e_i$ is colored by the number $|L(S_i)|$ of leaves of the smaller tree $S_i$ and we define its length by setting  $s_i = t_i$ is the weight of the corresponding bracket. The nesting condition on the brackets implies that no cycles are formed this way.  We also connect $v_T$ and $v_\tau$ by an edge of length $1$, colored by $n=|L(T)|$, which is also the color of the root of the tree. 

Finally for each vertex $v$ of $T$, we attach a leaf $l_v$ to the vertex $v_i\in V(\mathbb{T})$ if $S_i$ is the smallest tree of the bracketing containing $v$, attaching it to $v_T$ if $v$ is contained in no bracket. This leaf is colored by the arity of $v$ in $T$. 
This defines the tree $\mathbb{T}$, with edge lengths $s$ and edge coloring $f$.

We pick some planar structure for $\mathbb{T}$. 
 (Recall that elements of $W_0\OO$ are only defined up to non-planar isomorphism, which is why there is some freedom here.) 
    Note that the leaves of $\mathbb{T}$ correspond exactly to the vertices of $T$. The ordering $\lambda:\{1,\dots,k\}\to L(\mathbb{T})$ is determined by $\sigma$ and this identification.
 This defines the tuple $(\mathbb{T},f,\lambda,s)$.
    
All that remains is to define the decoration $p$ of the vertices of $\mathbb{T}$ by elements of $\OO$. We need to have that $(T,\sigma,\tau)$ is given by the composition of the elements of the vertices of $\mathbb{T}$ so to determine the decorations in $\mathbb{T}$, what we need is to ``undo'' the compositions in $T$ marked by the bracketings.  

Let $v_j\in V(\mathbb{T})$. We define $p(v_j)$ to be the element $(S_j/\sim,\sigma_j,\tau_j)\in \OO(out(v_j);in(v_j))$ where  $S_j/\sim$ is the planar tree $S_j$ with each subtree $S_i\subsetneq S_j$ collapsed to a corolla with the same set of leaves, $\sigma_j$ orders the vertices according to the above chosen planar ordering of $\mathbb{T}$, where we note that the incoming edges of $v_j$ correspond precisely to the vertices of $S_j/\sim$, 
and $\tau_j$ labels the leaves of $S_j/\sim$, which are also the leaves of $S_j$, in the order given by the planar embedding of $T$. 
(Here it is important that the chosen planar structure of $\mathbb{T}$ is compatible with the chosen order $\sigma_j$ of $V(S_j/\sim)$. On the other hand, the chosen order $\tau_j$ of $L(S_j/\sim)$ is not important, as we will fix it below using the vertex $v_\tau$.) 
This determines $p$ uniquely on all vertices $\{v_j\}_{j\in J}\cup\{v_T\}$.
    Finally, the vertex $v_\tau$ is labeled by the permutation $\tau\in \Sigma_n$, considered as an element of $\OO(n;n)$. 
    
    This finishes the construction of $(\mathbb{T},f,\lambda,s,p)$. To compute its image under $\Psi$, we have to pass to a reduced representative, which means collapsing the edge between $v_\tau$ and $v_T$ and composing their labeling. (The length of that edge is forgotten.) 
We have that $\Psi(\mathbb{T},f,\lambda,s,p)=(T,\sigma,\tau,B,t)$, by our choice of $p$ for the tree $T$ and its leaf-labeling $\tau$, our choice of $\lambda$ for the ordering $\sigma$ of the vertices, our choice of vertices of $\mathbb{T}$ for $B$, and with a direct correspondence between the length $s_i$ of the edge $e_i$ and the weight $t_i$ of $B_i$. 

\medskip

To finish the proof, we check that $\Psi$ is injective. We will check that, up to the equivalence relations defining $W_0\OO$, there is a unique reduced  $(\mathbb{T'},f',\lambda',s',p')$ in the preimage of $(T,\sigma,\tau,B,t)$. 
Note that the number of vertices of such a reduced representative is determined by the tree $T$ and the cardinality of $B$. 
We consider first the cases where $\mathbb{T}'$ has 0 or 1 vertex. 

If $\mathbb{T}'$ has no vertices, then $\mathbb{T}'=\eta$ representing the identity element in $\OO(1;1)$, $B=\emptyset$, and, up to the equivalence relations of $W_0\OO$, there is only one possibility for $(\mathbb{T'},f',\lambda',s',p')$. 

Suppose now that $\mathbb{T}'=C_k$ has exactly one vertex of arity $k$. 
The leaves of $\mathbb{T'}$ are in one-to-on correspondence with the vertices of $T$, with $\lambda'$ ordering its leaves, and $f'$ coloring them $m_1,\dots,m_k, n$, with $m_i$ the color of $\lambda'(i)$. We can choose a representative of $(\mathbb{T}',f',\lambda',s',p')$ so that the planar structure of $\mathbb{T'}=C_k$ is given by the ordering $\sigma$ of the vertices of $T$. Then the labeling $p$ of the vertex is necessarily precisely $(T,\sigma,\tau)$. So there is only one possibility for $(\mathbb{T'},f',\lambda',s',p')$. 

Finally, if $\mathbb{T}'$ has at least two vertices, then it must have precisely $|B|+1$ vertices arranged in a tree according to the nested structure of the bracket, and $k$ leaves, with each leaf attached to the vertex corresponding to the appropriate bracket. The root vertex of $\mathbb{T}$ corresponds to the whole tree $T$. The coloring of the edges is determined by the arity of the vertices and brackets in $T$, and the labeling of the leaves $\lambda$ is determined by the ordering $\sigma$. The vertices are decorated by tuples $(T_j,\sigma_j,\tau_j)$, with $T_j$ determined by the bracketing $B$, $\sigma_j$ determined by the nesting of the bracketing once a planar structure for $\mathbb{T}'$ is chosen. Choosing a different planar structure will give an equivalent element of $W_0\OO$ (in fact also of $W\OO$). The ordering $\tau_j$ is likewise not uniquely determined by the situation, but a different choice that does yield the same tuple $(T,\sigma, \tau,B,s)$ under $\Psi$ will be equivalent in $W_0\OO$, using relation $(2')$.  This finishes the proof of injectivity. 

\end{proof}

We are now ready to prove our main result in this appendix, namely that $W_0\OO$ and $B\OO$ are isomorphic as topological operads.

\begin{proof}[Proof of Theorem~\ref{thm:BOWO}]
In  Lemma \ref{lemma:Constructing-the-map-WO-BO} we constructed a map of topological operads
$$\Psi: W_0\mathcal{O} \longrightarrow B\mathcal{O}.$$ 
Combining this with Lemma~\ref{lemma:Bijective-map-of-operads} we know that, for each tuple $(n;m_1,\dots,m_k)$, the map \[\Psi:W_0\mathcal{O}(n;m_1,\dots,m_k) \longrightarrow B\mathcal{O}(n;m_1,\dots,m_k)\] is a continuous bijection. 
As the source of this map is a compact space ($\pi_0W_0\mathcal{O}(n;m_1,\dots,m_k)=\OO(n;m_1,\dots,m_k)$ is finite and there are finitely many reduced representatives $(\mathbb{T},f,\lambda,s,p)$ defining a cube in each component), and the target is a Hausdorff space, $\Psi$ is therefore a local homeomorphism and hence an isomorphism of topological operads. 
\end{proof}

\begin{remark}\label{rem:WO as poset}
A corollary of the result we just proved is that $W_0\OO$ is the realization of an operad in posets, namely the operad $B\OO$. 
The operad $W\OO$ can likewise be seen as the realization of an operad in posets, namely the poset of elements of the free operad $F\OO$, with poset structure generated by edge collapses. 
The map of operads $q:W\OO\to W_0\OO$ is the realization of a map of posets. 
Indeed, the map $q\colon W\OO\to W_0\OO\cong B\OO$ respects the poset structure because collapsing an edge in $\mathbb{T}$, which defines the poset structure underlying $W\OO$, corresponds under the map $q$ to forgetting a bracket, which defines the poset structure underlying $W_0\OO=B\OO$. 
\end{remark}

\section{The explosion category of $\om$}\label{sec:hatomega}
In Section~\ref{sec: Omega W} we introduced an enriched version of the dendroidal category $\widetilde\om_0$ which is closely related to the category of $B\OO$-algebras. As mentioned in the introduction of Section~\ref{sec:thickening-omega}, the idea of the category $\widetilde\om_0$ is to encode homotopy coherent $\om$--diagram, and hence $\widetilde\om_0$ should be connected to the \emph{explosion category} of $\om$, as defined by Leitch \cite{Leitch} and Segal \cite[Appendix B]{Segal74}. 

In this appendix we describe the explosion category of $\om$, denoted $\widetilde\om$, and 
show that our topological category $\widetilde\om_0$ sits between $\widetilde\om$ and $\om$ in the sense that there exist equivalences of topological categories $$ \begin{tikzcd} \widetilde\Omega \arrow[r, "q" swap]\arrow[rr, dotted, bend left=25, "\tilde{p}"]& \widetilde\Omega_0 \arrow[r,"p" swap] & \Omega\end{tikzcd}.$$ 

The explosion construction and the $W$--construction are very closely related in spirit. One might thus expect a relationship between Segal $\widetilde\om$--diagrams and $W\OO$--algebras, similar to the relationship between Segal dendroidal spaces ($\om$--diagrams) and $\OO$--algebras, and between Segal homotopy dendroidal spaces ($\widetilde\om_0$--diagrams) and $W_0\OO$-- or $B\OO$--algebras.  Theorem~\ref{thm:WO-Omega} below will show that such a relationship exists, but without being as close as in the other cases: $W\OO$--algebras identify with a full subcategory of the category of reduced strict Segal $\widetilde{\om}$--diagrams.  

\subsection{The explosion of $\om$}
 
For each morphism $g:S\to T$ in $\om$, we define a \emph{poset of paths} $\text{Path}_{\om}(S,T)_g$ whose objects are the 
factorizations of $g:S\rightarrow T$ in $\om$ \[\begin{tikzcd}S\arrow[rrrr,bend left =20,dotted, "g"]\arrow[r,"g_1", swap]& T_1\arrow[r,"g_2", swap]&\dots\arrow[r]& T_{n-1}\arrow[r,"g_n", swap]&T, \end{tikzcd} \] 
where we identify two factorizations if they differ only by identity morphisms. In particular, each such factorisation $(g_1,\dots,g_n)$ has a unique reduced representative containing no identity morphisms unless $n = 1$ and $g$ is the identity on $S$. (Such a factorisation can be thought of as a path in the nerve of $\om$.) The poset structure is by refinement of factorisation: $(g_1,\dots,g_n)\le (g_1',\dots,g_m')$ if $n\le m$ and there is a monotone map $\alpha:\{0,\dots,n\}\to \{0,\dots,m\}$ such that $\alpha(0)=0$, $\alpha(n)=(m)$, and $g_i=g'_{\alpha(i)}\circ \dots\circ g'_{\alpha(i-1)+1}$ for each $1\le i\le n$.

We denote the geometric realization of this poset by
$$K_g:=|\text{Path}_{\om}(S,T)_g|.$$

\begin{definition}\label{def:omega-hat}
The topological category $\widetilde\Omega$ has the same objects as $\Omega$. Morphism spaces in $\widetilde\Omega$ are defined as \[\Hom_{\widetilde\Omega}(S,T)=\coprod_{g\in \Hom_\om(S,T)} K_g=\coprod_{g\in \Hom_\om(S,T)}|\text{Path}_{\om}(S,T)_g|.\] 
Composition of morphisms of $\widetilde\Omega$ is given by concatenation of factorizations. 
\end{definition}

\begin{example}\label{example maps in hat omega from corolla}
Fix a tree $T$ with $|L(T)|=n$ leaves and three inner edges: $e_1$, $e_2$, $e_3$.  Recall that $C_n$ denotes the corolla with $n$ leaves. Let $\partial_{e_1}$, $\partial_{e_2}$, $\partial_{e_3}$ denote the inner face maps in $\om$ associated to each inner edge, and let $g=\partial_{e_1}\partial_{e_2}\partial_{e_3}:C_{n}\rightarrow T$ 
be their composition. 
Then $g$ admits a factorization \[\begin{tikzcd} C_n\arrow[r] \arrow[rrr,bend left =20,dotted, "g"] &T_1 \arrow[r] & T_2 \arrow[r]& T.  \end{tikzcd}\] 
as a composition of three inner face maps for each permutation of $\{1,2,3\}$.  
The elements of  $\text{Path}_{\Omega}(C_n,T)_g$ that involve only these three inner face maps form a subposet with 
    $ ([1],\xrightarrow{g}) $ as minimum, 
and for each permutation $\sigma\in\Sigma_3$ the elements
    \begin{align*}
        ([3],\xrightarrow{\partial_{\sigma(1)}}\xrightarrow{\partial_{\sigma(2)}}\xrightarrow{\partial_{\sigma(3)}}) \quad \quad ([2],\xrightarrow{\partial_{\sigma(1)}}\xrightarrow{\partial_{\sigma(2)}\partial_{\sigma(3)}})\quad \quad
        ([2],\xrightarrow{\partial_{\sigma(1)}\partial_{\sigma(2)}}\xrightarrow{\partial_{\sigma(3)}}). 
    \end{align*}

Each permutation $\sigma$ this way contributes to a square
    \[\begin{tikzcd}
        (\xrightarrow{\partial_{\sigma(1)}\partial_{\sigma(2)}}\xrightarrow{\partial_{\sigma(3)}}) \arrow[r] & (\xrightarrow{\partial_{\sigma(1)}}\xrightarrow{\partial_{\sigma(2)}}\xrightarrow{\partial_{\sigma(3)}})\\
        (\xrightarrow{g}) \arrow[u] \arrow[r] \arrow[ur] & (\xrightarrow{\partial_{\sigma(1)}}\xrightarrow{\partial_{\sigma(2)}\partial_{\sigma(3)}}) \arrow[u]\\
    \end{tikzcd}\]
in this subposet, and the dendroidal identities tell us that these squares together form the following hexagon inside  $|\text{Path}_{\Omega}(C_n,T)_g|$:
\[\includegraphics[width=0.7\textwidth]{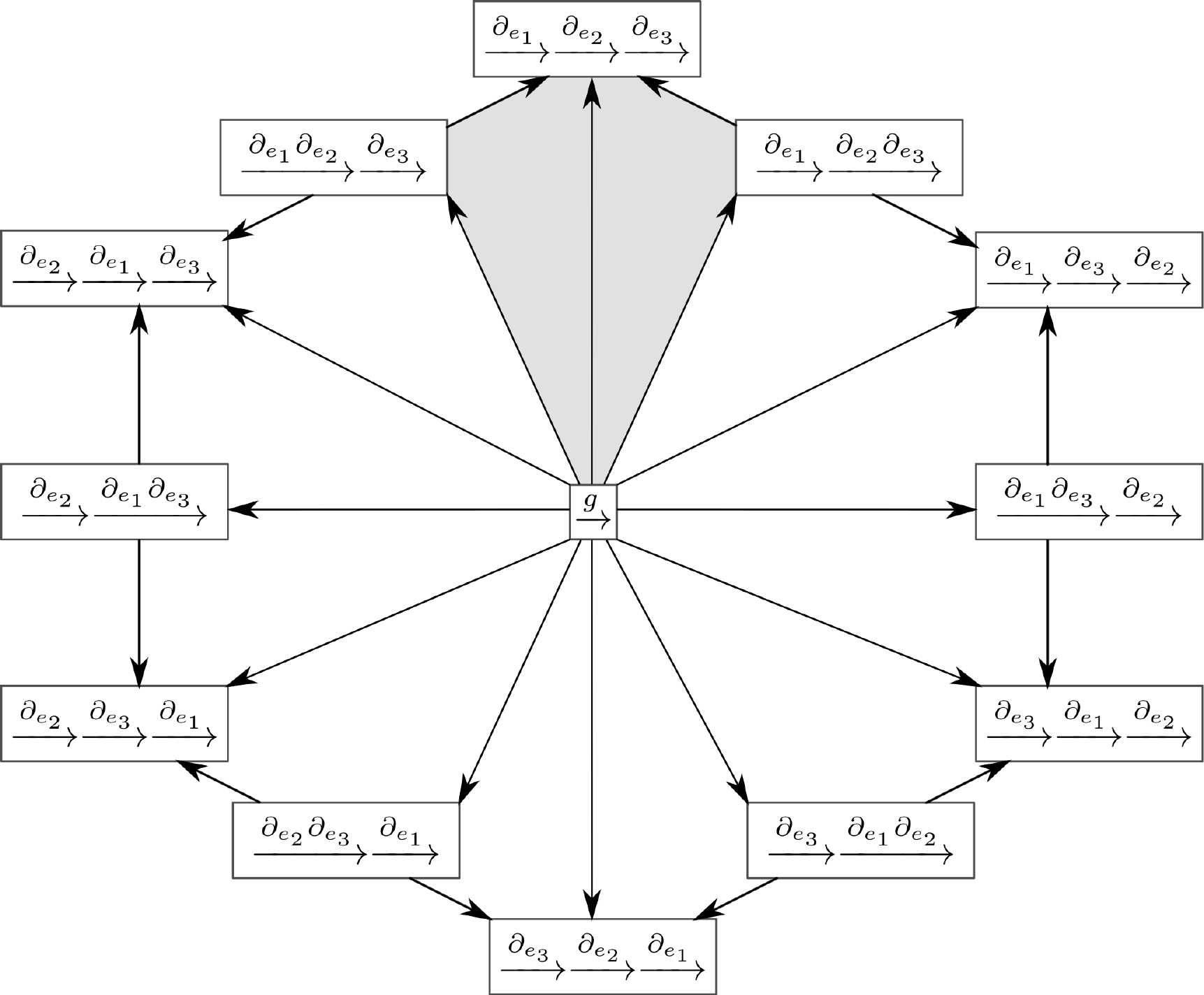}\]
Additional elements of $\text{Path}_{\Omega}(C_n,T)_g$ can be obtained by inserting tree isomorphisms. This example should be compared to Examples~\ref{example: pentagon} and~\ref{example: hexagon-trivalent} which can be interpreted as computing morphism spaces in the category $\widetilde\om_0$ likewise associated to trees with three internal edges, where in one case a pentagon occurs, and in the other it is a hexagon. 
\end{example}

\begin{lemma}
For each $g\in\Hom_{\om}(S,T)$  the space  $K_g=|\text{Path}_{\om}(S,T)_g|$ is contractible. 
\end{lemma}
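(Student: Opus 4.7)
The plan is to observe that the poset $\Path_\om(S,T)_g$ has an initial (minimum) element, from which contractibility of its nerve is immediate. Concretely, the one-term factorization $(g)$, corresponding to $n=1$ with $g_1 = g$, is a legitimate object of the poset. For any other factorization $(g_1',\dots,g_m')$ of $g$ (where $m\ge 1$), the monotone map $\alpha\colon\{0,1\}\to\{0,\dots,m\}$ defined by $\alpha(0)=0$ and $\alpha(1)=m$ satisfies $g = g'_m\circ\cdots\circ g'_1$, which is exactly the defining condition that $(g)\le (g_1',\dots,g_m')$ in the refinement order.

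Hence $(g)$ is a minimum, and the nerve of a poset with a minimum element deformation retracts onto that vertex; equivalently, the inclusion of the initial object induces a homotopy equivalence on nerves, so $K_g=|\Path_\om(S,T)_g|$ is contractible. This is the same mechanism that was used earlier for $|\mathcal{B}(T)|$ in Lemma~\ref{bracketing-space-realization}, where the empty bracketing plays the role of the minimum; here the one-term factorization plays the analogous role.

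The only subtlety to check is that the identification of factorizations up to identity morphisms, made in the definition of $\Path_\om(S,T)_g$, does not destroy the initial object. Under this identification, the unique reduced representative of the one-term factorization is still $(g)$ itself (or, if $g$ is an identity, the trivial path of length one), and the comparison maps $\alpha$ above survive after reduction because deleting identity morphisms from a longer factorization does not affect the underlying composition. So there is no real obstacle — the main point to be careful about is just bookkeeping of the refinement relation once identities are modded out, and the lemma follows in two or three lines.
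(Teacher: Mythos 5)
Your proof is correct and takes exactly the paper's route: the one-term factorization $(g)$ is the minimum of $\Path_\om(S,T)_g$, and a poset with a minimal element has contractible nerve. The extra bookkeeping about identity morphisms is fine but not needed beyond what the paper's one-line argument already records.
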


\begin{proof}
The poset $\text{Path}_{\om}(S,T)_g$ has the trivial factorisation $S\xrightarrow{g} T$ as a minimal element. 
\end{proof}

Let $\tilde{p}:\widetilde\Omega\rightarrow\Omega$ be the functor that is the identity on objects and projects each morphism space $K_g$ to $g$. 
Considering $\om$ as a discrete topological category, the lemma immediately gives the following proposition. 
\begin{prop}\label{prop:pi}
The functor $\tilde{p}:\widetilde\Omega\rightarrow\Omega$ induces a homotopy equivalence on morphism spaces. 
\end{prop}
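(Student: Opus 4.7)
The plan is to observe that this proposition is essentially an immediate corollary of the preceding lemma, together with the definition of the morphism spaces of $\widetilde{\Omega}$ and the convention that $\Omega$ is regarded as a discrete topological category. The only thing to verify is that a disjoint union of contractible spaces maps by a homotopy equivalence to the underlying discrete set of components.

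In detail, I would first unpack the morphism spaces. For fixed objects $S,T \in \widetilde{\Omega} = \Omega$, the map induced by $\tilde{p}$ on morphism spaces is
\[
\tilde{p}_{S,T}\colon \Hom_{\widetilde{\Omega}}(S,T) = \coprod_{g \in \Hom_{\Omega}(S,T)} K_g \longrightarrow \Hom_{\Omega}(S,T),
\]
where the target is discrete. By construction $\tilde{p}_{S,T}$ respects the coproduct decomposition: its restriction to the component $K_g$ is the constant map to the single point $g \in \Hom_{\Omega}(S,T)$.

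Next I would invoke the preceding lemma, which asserts that each $K_g = |\mathrm{Path}_{\Omega}(S,T)_g|$ is contractible, owing to the fact that the poset $\mathrm{Path}_{\Omega}(S,T)_g$ has the trivial factorization $S \xrightarrow{g} T$ as a minimum. Thus each constant map $K_g \to \{g\}$ is a homotopy equivalence. Taking the disjoint union over $g \in \Hom_{\Omega}(S,T)$ of these homotopy equivalences (with fixed homotopy inverses choosing, say, the minimum of each poset) yields a homotopy equivalence $\tilde{p}_{S,T}$, as a coproduct of homotopy equivalences between cofibrant spaces is a homotopy equivalence onto the coproduct.

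There is no substantive obstacle here; the only subtlety worth flagging is to make explicit that $\Omega$ is being treated as a discrete topological category (so that $\Hom_{\Omega}(S,T)$ carries the discrete topology and decomposes as $\coprod_g \{g\}$), which aligns the target of $\tilde{p}_{S,T}$ with the coproduct structure on the source. Once this is said, the proof reduces to two lines.
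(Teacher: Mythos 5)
Your argument is correct and is exactly the one the paper intends: the proposition is stated as an immediate consequence of the contractibility of each $K_g$ (the preceding lemma), using that $\Omega$ is a discrete topological category so that $\tilde{p}_{S,T}$ restricts to the constant map $K_g\to\{g\}$ on each component. No gap, and no difference in approach.
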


Note that the proposition identifies $\om$ with the ``path component category'' $\pi_0\widetilde\om$, which has the same objects as $\widetilde\om$ and $\Hom_{\pi_0\widetilde\om}(S,T):=\pi_0(\Hom_{\widetilde\om}(S,T))$.

\subsection{The relationship between $\widetilde{\om}$ and $\widetilde\om_0$}

The category $\widetilde\om_0$ sits between $\widetilde\om$ and $\om$ in the sense of the following proposition.

\begin{prop}
There is a functor $q:\widetilde\Omega\rightarrow\widetilde\Omega_0$, which is the identity on objects and induces a homotopy equivalence on each morphism space. Moreover, the composition 
$p\circ q=\tilde{p}:\widetilde\om \to\om$ is the projection functor of Proposition~\ref{prop:pi}. 
\end{prop}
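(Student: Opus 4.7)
The functor $q$ must be the identity on objects, so the real content is to construct, for each morphism $g\colon S \to T$ in $\om$, a continuous map $q_g\colon K_g \to L_g$ compatible with composition and lying over the identity of $g\in \Hom_\om(S,T)$. Since both $K_g = |\Path_\om(S,T)_g|$ and $L_g = |\mathcal{L}_g|$ are geometric realisations of posets, the plan is to construct $q_g$ as the realisation of a poset map
\[\Phi_g \colon \Path_\om(S,T)_g \longrightarrow \mathcal{L}_g = \prod_{v \in V(S)} \mathcal{B}(g(C_v)),\]
using the natural way that a factorisation of $g$ records how the expansion at each vertex of $S$ is broken up.

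Given a factorisation $S = T_0 \xrightarrow{g_1} T_1 \to \dots \xrightarrow{g_n} T_n = T$ and a vertex $v\in V(S)$, for each $1\leq i \leq n-1$ and each vertex $w$ of the subtree $(g_i\cdots g_1)(C_v)\subset T_i$, the further composite $(g_n\cdots g_{i+1})(C_w)$ is a subtree of $g(C_v)$. I would define $B_v^g$ to be the collection of all such subtrees that are both large and proper, and set $\Phi_g$ to assign the tuple $(B_v^g)_{v\in V(S)}$ to the factorisation. This collection is nested: subtrees arising from distinct vertices $w_1, w_2$ at the same stage have disjoint vertex sets inside $g(C_v)$, while a subtree arising at stage $j>i$ is either contained in the stage-$i$ subtree of some $w_1$ (if $w_2$ lies in the expansion $(g_j\cdots g_{i+1})(C_{w_1})$) or has disjoint vertex set from it. Refining a factorisation can only add more brackets, so $\Phi_g$ is a poset map.

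The main obstacle is verifying functoriality of $\Phi := (\Phi_g)_g$: given composable $R \xrightarrow{f} S \xrightarrow{g} T$, the bracketing extracted by $\Phi_{g\circ f}$ from a concatenated factorisation must agree, at each vertex $w \in V(R)$, with the composition $\Phi_g \circ \Phi_f$ under the composition rule of Section~\ref{sec: Omega W}. This reduces to identifying the three families $\tilde B^g_{f(w)}$, $\tilde B^{g\circ f}_w$, and $\tilde B^f_w$ appearing in the composite bracketing with the subtrees contributed by stages of the concatenation lying strictly above $S$, exactly at $S$, and strictly below $S$ respectively; checking that the ``middle'' subtrees $g(C_v)$ for $v\in f(C_w)$ enter the bracketing exactly when large is the bookkeeping step to get right. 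Once this is done, $q$ is a well-defined functor, and because both $K_g$ and $L_g$ are contractible (by Lemma~\ref{bracketing-space-realization} and the trivial factorisation being a minimum of $\Path_\om(S,T)_g$), every map $q_g\colon K_g \to L_g$ is automatically a homotopy equivalence. The equality $p\circ q = \tilde p$ is immediate, since all three functors are the identity on objects and each sends a morphism to its underlying element of $\Hom_\om(S,T)$.
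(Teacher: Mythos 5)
Your proposal is correct and follows essentially the same route as the paper: the same assignment $(g_1,\dots,g_n)\mapsto (B_v)_{v\in V(S)}$ with $B_v$ the large proper subtrees $(g_n\cdots g_{i+1})(C_w)$ for $w\in V((g_i\cdots g_1)(C_v))$, the same observation that refinement of factorisations induces inclusion of bracketings, and the same reduction of functoriality to matching the three families $\tilde B^g_{f(w)}$, $\tilde B^{g\circ f}_w$, $\tilde B^f_w$ with the stages of the concatenated factorisation lying above, at, and below $S$. The remaining bookkeeping you defer is exactly the computation the paper carries out, and your concluding appeals to contractibility and to all functors being identity on objects match the paper's as well.
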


\begin{proof}
    Fix two objects $S,T\in\widetilde\om$. Recall from Definition~\ref{def:omega-hat} that 
\[\Hom_{\widetilde{\om}}(S,T)=\coprod\limits_{g\in\Hom_{\om}(S,T)}K_g\]
for $K_g=|\text{Path}_{\om}(S,T)_g|$ is the realization of the poset of factorizations of $g$, and $K_g$ is contractible. Likewise by Definition~\ref{def: Omega W} \[\Hom_{\widetilde{\om}_0}(S,T)=\coprod\limits_{g\in\Hom_{\om}(S,T)}L_g\]
and $L_g=\prod\limits_{v\in V(S)}|\B(g(C_v))|$ is the realization of the poset $\mathcal{L}_g$ of bracketings of the trees $g(C_v)$, with $L_g$ likewise contractible. So to prove the proposition, it is enough to produce a functor $q$ which is the identity on objects and takes $K_g$ to $L_g$ for each $g$.  We will define the functor by defining a poset map 
$$q_g: \text{Path}_{\om}(S,T)_g\to \mathcal{L}_g$$
and show that it is compatible with composition. 

\medskip
    
Fix a map $g:S\rightarrow T$ in $\om$. An object of $\Path_{\om}(S,T)_g$ is a factorization $(g_1,\dots, g_n)$ of $g$ and to such a factorization of $g$, for each $v\in V(S)$, we associate a  bracketing of $g(C_v)$ as follows: set 
$$B_v=\{S_w=g_n\circ\dots\circ g_{i+1}(C_w)\}_{\begin{subarray}{l}1\le i\le n-1\\ w\in V(g_{i}\circ\dots\circ g_1(C_v))\\ S_w\subsetneq g(C_v) \ \textrm{large} \end{subarray}}$$
This is a (possibly empty) bracketing as these sets are by definition  nested. We then define 
$q_g(g_1,\dots,g_n)=(B_v)_{v\in V(S)}$. Note that this association is a map of posets as refining a factorization will correspond under $q_g$ to an inclusion of bracketings.

We are left to check that the maps $q_g$ assemble to define a functor, i.e. that they are compatible with composition in $\widetilde\om$ and $\widetilde\om_0$. Let $f:R\to S$ be another morphism in $\om$. We need to check that 
$$\xymatrix{\Path_{\om}(S,T)_g\times \Path_{\om}(R,S)_f \ar[d]_{q_g\times q_f} \ar[r] & \Path_{\om}(R,T)_{g\circ f} \ar[d]^{q_{g\circ f}}\\
\mathcal{L}^{op}_g\times \mathcal{L}^{op}_f \ar[r] & \mathcal{L}^{op}_{g\circ f}}$$
commutes. Because the target is a poset, it is enough to check that it commutes on objects. 
Let 
$(g_1,\dots,g_n)$ and $(f_1,\dots,f_m)$ be objects of  $\Path_{\om}(S,T)_g$ and $\Path_{\om}(R,S)_f$. By definition, their composition is $(f_1,\dots, f_m,g_1,\dots,g_n)\in \Path_{\om}(R,T)_{g\circ f}$. We have 
$q_f(f_1,\dots,f_m)=(B^f_x)_{x\in V(R)}$ and $q_g(g_1,\dots,g_n)=(B^g_v)_{v\in V(S)}$ with 
$$B^f_x=\{S_y=f_m\circ\dots\circ f_{i+1}(C_y)\}_{\begin{subarray}{l}1\le i\le m-1\\ y\in f_{i}\circ\dots\circ f_1(C_x)\\ S_y\subsetneq f(C_x) \ \textrm{large} \end{subarray}}$$
and bracketing of $f(C_x)\subset S$, and 
$$B^g_v=\{S_w=g_n\circ\dots\circ g_{i+1}(C_w)\}_{\begin{subarray}{l}1\le i\le n-1\\ w\in g_{i}\circ\dots\circ g_1(C_v)\\ S_w\subsetneq g(C_v) \ \textrm{large} \end{subarray}}$$
a bracketing of $g(C_v)\subset T$. By definition, $q_g(g_1,\dots,g_n)\circ q_f(f_1,\dots,f_m)$ is the collection $(\bar B_x)_{x\in V(R)}$ of bracketings of each tree $g\circ f(C_x)\subset T$ defined by 
$$\bar B_x= \Big(\bigcup_{v\in f(C_x)} B^g_v\Big) \cup \Big(\bigcup_{\begin{subarray}{c}v\in f(C_x)\\ g(C_v)\subsetneq g\circ f(C_x) \ \textrm{large}\end{subarray}} \{g(C_v)\}\Big) \cup 
\Big(\bigcup_{\begin{subarray}{c}S_y\in B^f_x \\ g(S_y)  \ \textrm{large}\end{subarray}} \{g(S_y)\}\Big),$$
where 
$B^g_v$ is considered as a bracketing of $g\circ f(C_x)$ via the inclusion $g(C_v)\subset g\circ f(C_x)$.  
Now we see that this is exactly the bracketing of $g\circ f(C_x)$ defined by the factorization $(f_1,\dots,f_m,g_1,\dots,g_m)$, which indeed is the union of the sets 
\begin{align*}
\{g(S_y)=g\circ f_m\circ\dots\circ f_{i+1}(C_y)\}_{\begin{subarray}{l}1\le i\le m-1\\ y\in f_{i}\circ\dots\circ f_1(C_x)\\ g(S_y)\subsetneq g\circ f(C_x) \ \textrm{large} \end{subarray}}
&\cup \{S_v=g(C_v)\}_{\begin{subarray}{l} v\in f(C_x)\\ S_v\subsetneq g\circ f(C_x) \ \textrm{large} \end{subarray}}\\
&\cup \{S_w=g_n\circ\dots\circ g_{i+1}(C_w)\}_{\begin{subarray}{l}1\le i\le n-1\\ w\in g_{i}\circ\dots\circ g_1\circ f(C_x)\\ S_w\subsetneq g\circ  f(C_x) \ \textrm{large} \end{subarray}}.
\end{align*}
Hence the poset maps $q_g$ assemble to define a functor $q:\widetilde\om\to \widetilde\om_0$ as claimed. Moreover, one readily checks that the composition with the projection $p:\widetilde\om_0\to \om$ is the canonical projection $\tilde{p}:\widetilde\om\to \om$. 
\end{proof}

\subsection{$W\OO$--algebras as $\widetilde\om$--diagrams}

In Section~\ref{sec: homotopy dendroidal spaces} we showed that $B\OO$-algebras describe dendroidal Segal spaces. For completeness, we now show how homotopy dendroidal spaces $\mathcal{S}^{\widetilde\om^{op}}$ are related to $W\OO$-algebras. 

We will only need to consider $\widetilde\om$--diagrams $X:\widetilde\om^{op}\to \mathcal{S}$ that are reduced in the strict sense, i.e.~such that $X(\eta)=*$. Recall that in this case, for $X:\om\to \mathcal{S}$, the Segal map becomes the map 
$$X(T) \xrightarrow{\chi} \prod_{v\in V(T)}X(C_v)$$
induced by the restriction maps $T\to C_v$ in $\om^{op}$. Considering these morphisms as morphisms of $\widetilde\om$, we likewise have a Segal map for $X:\widetilde\om\to \mathcal{S}$ in this strictly reduced case.

In analogy to the case of dendroidal and homotopy dendroidal spaces, let $(\mathcal{S}^{\widetilde\om^{op}})_{strict}$ denote the full subcategory of $\mathcal{S}^{\widetilde\om^{op}}$ of $\widetilde\om$--diagrams $X:\widetilde\om^{op}\to \mathcal{S}$ such that $X(\eta)=*$ and such that the Segal map $\chi$ as above is an isomorphism for every $T\neq \eta$. We have the following:

\begin{theorem}\label{thm:WO-Omega}
There exists a functor \[\begin{tikzcd}\Psi:W\OO\mathrm{-Alg}\rightarrow (\mathcal{S}^{\widetilde\om^{op}})_{strict} \end{tikzcd}\] that embeds the category of $W\OO$-algebras as a full subcategory of the category of strictly reduced $\widetilde\om$--diagrams satisfying the strict Segal condition.
\end{theorem}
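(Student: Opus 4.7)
The approach parallels the construction of the functor $\Phi$ in Theorem~\ref{thm:W_0O-Omega_0}, with $W\OO$ replacing $B\OO$ and the topological category $\widetilde\om$ replacing $\widetilde\om_0$. Given a $W\OO$-algebra $\pp$ with structure maps $\alpha_\pp$, I would define $\Psi(\pp)\colon \widetilde\om^{op} \to \mathcal{S}$ on objects by $\Psi(\pp)(\eta) = *$ and $\Psi(\pp)(T) = \prod_{w \in V(T)} \pp(|w|)$ for $T \neq \eta$. For a morphism $g\colon S \to T$ in $\om$, I would define $\Psi(\pp)(g)\colon K_g \times \prod_{w \in V(T)} \pp(|w|) \to \prod_{v \in V(S)} \pp(|v|)$ componentwise over $v \in V(S)$, by combining the projection onto the factors indexed by the vertices of $g(C_v) \subseteq T$ with the $W\OO$-action, after producing a natural map $\rho_v^g \colon K_g \to W\OO(|v|; a_1, \dots, a_r)$ whose target arities $a_1, \dots, a_r$ are those of the vertices of $g(C_v)$.

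The heart of the argument is the construction of $\rho_v^g$. A factorization $g = g_n \circ \dots \circ g_1$ through intermediate trees $S = T_0, T_1, \dots, T_n = T$ restricts at $v$ to a nested decomposition of $g(C_v)$, since at each stage $g_{i+1}$ expands every vertex of $g_i \circ \dots \circ g_1(C_v) \subseteq T_i$ into a subtree of $T_{i+1}$. This nested tree-of-trees is exactly the data of an element in the $W\OO$-poset of Remark~\ref{rem:WO as poset}, with all edge lengths equal to $1$. A flag of refinements of factorizations yields a chain of such elements, and $\rho_v^g$ extends to the full realization $K_g$ by linearly interpolating edge lengths across each simplex, in the same spirit as the cubical description of $B\OO$ in Lemma~\ref{-cubical}. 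The main delicate step will be to verify that concatenation of factorizations in $\widetilde\om$ corresponds under $\rho_v^g$ to operadic composition in $W\OO$ (grafting with a new middle edge of length~$1$); this parallels the compatibility argument in Theorem~\ref{thm:W_0O-Omega_0} but must carefully track intermediate vertices labeled by identities or permutations, which would have been collapsed under the defining relations of $W_0\OO \cong B\OO$.

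Once $\Psi(\pp)$ is constructed, the strict Segal condition follows immediately from the product definition of $\Psi(\pp)$ on objects, and functoriality of $\Psi$ on $W\OO$-algebra morphisms is routine. To establish that $\Psi$ is a full embedding I would argue, mimicking the injectivity analysis of Theorem~\ref{thm:W_0O-Omega_0}, that the underlying symmetric sequence of $\pp$ is recovered as $\Psi(\pp)(C_n) = \pp(n)$ with the canonical $\Sigma_n$-action from $\mathrm{Aut}_{\widetilde\om}(C_n) = \Sigma_n$, while the $W\OO$-structure maps are reconstructed from the evaluations of $\Psi(\pp)$ at those morphisms in $\widetilde\om$ whose $\rho_v^g$-images cover the relevant $W\OO$-elements. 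The expected main obstacle is thus showing that $\rho_v^g$ sweeps out enough of $W\OO(|v|;a_1,\dots,a_r)$ as $g$ ranges over morphisms of $\om$ to recover $\alpha_\pp$ in full — this is analogous to the surjectivity argument in Lemma~\ref{lemma:Bijective-map-of-operads} and should follow by exhibiting, for each $W\OO$-element, an explicit factorization realizing it.
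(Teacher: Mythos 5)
Your proposal follows essentially the same route as the paper: the same object assignment, the same vertex-by-vertex construction of $\Psi(\pp)(g)$ via a poset map sending a factorization of $g$ restricted to $C_v$ to a ``tree of trees'' in the weight-one part of $W\OO$ (extended to $K_g$ using the poset-realization description of $W\OO$ from Remark~\ref{rem:WO as poset}), and the same strategy for fullness and faithfulness by realizing each element of $W\OO$ through a choice of level structure interpreted as a factorization in $\om$. The steps you flag as delicate (tracking arity-one vertices labeled by identities, and verifying that concatenation corresponds to grafting) are exactly the points the paper also addresses, so the proposal is correct and matches the paper's argument.
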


As we will see in the proof, $\widetilde\om$--diagrams are governed by a version of $W\OO$ where the trees $\mathbb{T}$ have an additional level structure, and $W\OO$--algebras identify then as the subcategory of diagrams where this level structure does not matter. If one wished to describe a category of homotopy dendroidal spaces which is isomorphic to $W\OO$-algebras, one could use this observation to take an appropriate quotient of $\widetilde\om$. As this is particularly messy, and not the main focus of this article, we have elected not to include such a construction. 

\begin{proof}
The proof is similar to that of  Theorem~\ref{thm:W_0O-Omega_0} treating the case of $B\OO$--algebras.  We start with the definition of the  functor $\Psi$. Let $\pp=\{\pp(n)\}_{n\ge 0}$ be a $W\OO$--algebra with structure maps
$$\alpha_\pp:W\OO(n;m_1,\dots,m_k)\times \pp(m_1)\times\dots\times \pp(m_k)\longrightarrow \pp(n).$$ We associate to this data an $\widetilde\om$--diagram 
$$\Psi(\pp)=\Psi(\pp,\alpha_\pp):\widetilde\om^{op} \rightarrow \mathcal{S}$$
as follows. Set $\Psi(\pp)(\eta)=*$ and, for $T\not=\eta$ in $\widetilde\om$, set
$$\Psi(\pp)(T)=\prod_{w\in V(T)}\pp(|w|).$$
For every morphism $g:S\to T$ in $\om$, we need to define maps
$$\Psi(\pp)(g):K_g\times \prod_{w\in V(T)}\pp(|w|) \longrightarrow \prod_{v\in V(S)}\pp(|v|).$$
As in Theorem~\ref{thm:W_0O-Omega_0}, we do this one vertex of $S$ at a time. 

Recall that $K_g$ is the realisation of the poset $\Path_{\om}(S,T)_g$
of factorizations 
$$S\xrightarrow{g_1} T_1\xrightarrow{g_2}\dots \xrightarrow{g_{n-1}} T_{n-1}\xrightarrow{g_n}T$$
of $g$ in $\om$. For each $v\in V(S)$, we consider the restriction of these maps to $C_v\in S$: 
\begin{equation}\label{equ:gCv}
C_v\xrightarrow{g_1} g_1(C_v)\xrightarrow{g_2}\dots \xrightarrow{g_{n-1}} g_{n-1}\circ\dots\circ g_1(C_v) \xrightarrow{g_n}
g(C_v)\subset T.\tag{$\ast$}
\end{equation}
Recall from Remark~\ref{rem:WO as poset} that $W\OO$ is the realization of an operad in posets, whose elements are those of the free operad $F\OO$ (identifying elements of $F\OO$ with elements of $W\OO$ in which all weights of internal edges are $1$). 
We will now use the restriction (\ref{equ:gCv}) of $(g_1,\dots,g_n)$ to $C_v$ to construct a labeled planar tree $$(\mathbb{T},f,\lambda,p)\in F\OO(|v|;(|w|)_{w\in V(g(C_v))})$$
by induction on the height of the tree:
\medskip

Starting at the root, we attach a vertex $\bar v$ of valence $|V(g_1(C_v))|$. The incoming edges of $\bar v$ are labelled in accordance with $(g_1(C_v),\sigma_v,\tau_v)$,  where $\sigma_v$ is a chosen ordering of the vertices of the tree $g_1(C_v)$, and $\tau_v$ is induced by the planar structure of $g_1(C_v)\subset T_1$. Specifically, the incoming edges of $\bar v$ are labeled by the vertices of $g_1(C_v)$ and ordered via the map $\sigma_v$.
\medskip

For each vertex $w\in g_1(C_v)$, which is now an incoming edge of $\bar v$, we can attach a vertex $\bar w$ of valence $|V(g_2(C_w))|$. These incoming edges are labeled with the tuple $(g_2(C_w),\sigma_w,\tau_w)$, as in the previous case.

\medskip

More generally, for vertices with height $2\le i\le n$, the tree $\mathbb{T}$ has a vertex $\bar y$ for every vertex $y$ in $(g_{i-1}\circ \dots\circ g_1)(C_v)$, attached to the previously constructed vertex $\bar x$ associated to the vertex $x\in (g_{i-2}\circ \dots\circ g_1)(C_v)$ satisfying that $y\in g_{i-1}(C_x)$. We label $\bar y$ by the tuple $(g_i(\bar y),\sigma_y,\tau_y)$ with $\tau_y$ induced by the planar structure of $T_i$, giving $C_{\bar y}$ the planar structure dictated by the chosen $\sigma_y$.

\medskip

We now set $f:E(\mathbb{T})\rightarrow\mathbb{N}$ to be the unique meaningful colouring which makes $\mathbb{T}$ an element of $F\OO(|v|;|w_1|,\dots,|w_k|)$. We set the ordering $\sigma$ of the vertices $w_1,\dots,w_k$ of $g(C_v)$ in accordance to the resulting planar structure on $\mathbb{T}$.  As the set of vertices of $g(C_v)$ is also the set of leaves of the tree $\mathbb{T}$, this also defines $\lambda$. We note that the tree constructed this way is in no way reduced and will, a priori, have many arity one vertices labelled by identities. We can use relations defining $F\OO$, however, to remove such vertices and give an equivalent element in $F\OO$.

\medskip
This assignment of the restriction of a factorisation (\ref{equ:gCv}) to a labelled tree $\mathbb{T}$ respects the poset structure of 
$\Path_{\om}(S,T)_g$ and $W\OO$ as refining the factorisation 
corresponds to undoing the collapse of edges, namely if 
$(g_1,\dots,g_n)\le (g_1',\dots,g_m')$, then the image 
$(\mathbb{T},f,\lambda,p)$ of the first factorisation can be obtained from the image $(\mathbb{T}',f',\lambda',p')$ of the second by 
collapsing the edges corresponding to the added levels, as collapsing level in the tree correspond in this construction to composing 
consecutive maps $g_i$.

In this way we can apply the structure map $\alpha_\pp$ one vertex at a time and define a map
\begin{equation}\label{equ:la2}
\alpha_v: |\Path_{\om}(S,T)_g|\times \prod_{w_i\in V(g(C_v))}\pp(|w_i|) \longrightarrow \pp(|v|)\end{equation} 
and we can define $$\Psi(\pp)(g)=(\alpha_v)_{v\in V(S)}.$$ By construction, the action of $\Psi(P)$ on morphisms commutes with composition in $\widetilde\om$, and thus $\Psi(\pp):\widetilde\om^{op}\rightarrow\mathcal{S}$ defines a functor. That the Segal map for $\Psi(\pp)$  
is 
an isomorphism for every $T\neq \eta$ follows immediately from our definition of $\Psi(\pp)$.

\medskip

The assignment $\pp\mapsto \Psi(\pp)$ requires only the data of underlying symmetric sequence of $\pp$ and the algebra structure maps $\alpha_\pp$. 
 This data is natural under maps of $W\OO$-algebras and thus
\[\Psi:W\mathcal{O}\mathrm{-Alg}_{\mathcal{S}} \longrightarrow \mathcal{S}^{\widetilde\Omega^{op}}\] is a functor.

\medskip

It remains to check that $\Psi$ is an embedding of a full subcategory. Injectivity on objects follows from the fact that if $\pp$ and $\mathcal{Q}$ satisfy that $\Psi(\pp)=\Psi(\mathcal{Q})$, then we necessarily have that $\pp(n)=\mathcal{Q}(n)$ for each $n$, as given by the value at the corolla, with agreeing symmetric group actions as given by the isomorphisms of corollas, and the structure maps $\alpha_\pp$ and $\alpha_{\mathcal{Q}}$ likewise must agree as the value of the structure map on every element of $W\OO$ is the value of the functor $\Psi(\pp)=\Psi(\mathcal{Q})$ on an associated morphism of $\widetilde\om$ obtained by choosing a level structure on the tree and interpreting the collapse of each level of the tree as a morphism in $\om$. As morphisms of $W\OO$--algebras are determined by what they do on spaces $\pp(n)$, we see that the functor is faithful. It is also full as natural transformations between diagrams originating from $W\OO$--algebras, will necessarily respect the $W\OO$-algebra structure of their values at the corollas. 

\end{proof}

\begin{remark}\label{remark: hcn}
The reader might be tempted to compare the functor $\Psi(-)$ from Theorem~\ref{thm:WO-Omega} with the \emph{homotopy coherent nerve} of a topological operad $\pp$. This is a functor $$w^*:\OO\mathrm{-Alg}\rightarrow \set^{\om^{op}}$$ defined by $$(w^*\pp)(T)=\Hom_{\mathsf{Op}}(W\Omega(T),\pp),$$ where $W\Omega(T)$ denotes the Boardman-Vogt $W$-construction applied to the free operad generated by a tree $T$ (Example~\ref{example: Omega(T)}). The functors $\Psi$ and $w^*$ are not equivalent on operads, though if one has a $W\OO$-algebra $\pp$ which happens to be an operad then one can define a dendroidal space $X_\pp\in\mathcal{S}^{\om^{op}}/w^*\pp$, where the later denotes the slice category.  For more on this point of view, see \cite[Remark 6.2]{mw07} or \cite[Corollary 1.7]{BdBM}. 
\end{remark}

\bibliographystyle{plain}
\bibliography{bibliography.bib}

\end{document}